\theoremstyle{plain}
\newtheorem{theorem}{Theorem}
\newtheorem{lemma}{Lemma}
\newtheorem{proposition}{Proposition}
\newtheorem{corollary}{Corollary}
\theoremstyle{definition}
\newtheorem{definition}{Definition}
\begin{document}

\begin{center}\Large
\textbf{On some applications of \\
Fitting like subgroups of finite groups}
\normalsize

\smallskip
Viachaslau I. Murashka and Alexander F. Vasil'ev

 \{mvimath@yandex.ru, formation56@mail.ru\}

Faculty of Mathematics and Technologies of Programming,

 Francisk Skorina Gomel State University,  Gomel 246019, Belarus\end{center}

\begin{abstract}
  In this paper we study the groups all whose maximal or all Sylow subgroups are $K$-$\mathfrak{F}$-subnormal in their product the with  generalizations $\mathrm{F}^*(G)$ and $\mathrm{\tilde F}(G)$ of the Fitting subgroup. We prove that  a hereditary formation $\mathfrak{F}$  contains every group all whose Sylow subgroups are $K$-$\mathfrak{F}$-subnormal in their product with $\mathrm{F}^*(G)$ if and only if $\mathfrak{F}$ is the class of all $\sigma$-nilpotent groups for some partition $\sigma$ of the set of all primes.
 We obtain a new characterization of the  $\sigma$-nilpotent hypercenter, i.e. the $\mathfrak{F}$-hypercenter and the largest normal  subgroup which $K$-$\mathfrak{F}$-subnormalize all Sylow subgroups coincide if and only if $\mathfrak{F}$ is the class of all $\sigma$-nilpotent groups.
\end{abstract}

 \textbf{Keywords.} Finite group;   Fitting subgroup;  generalized Fitting subgroup; hereditary formation; $K$-$\mathfrak{F}$-subnormal subgroup; $\mathfrak{F}$-hypercenter; $\sigma$-nilpotent  group.

\textbf{AMS}(2010). 20D25,  20F17,   20F19.

\section*{Introduction}

Throughout this paper, all groups are finite,   $G$, $p$ and  $\mathfrak{X}$  always denote a finite group,     a prime  and a class of groups, respectively.

Recall that  every group $G$ has the largest normal nilpotent subgroup
 $\mathrm{F}(G)$. This subgroup is called \emph{the Fitting subgroup}.  \emph{The generalized Fitting subgroup} $\mathrm{F}^*(G)$ was introduced by  H.~Bender \cite{f2} and  can
 \cite[X, Theorem 13.13]{19} be defined by
$$ \mathrm{F}^*(G)/\mathrm{F}(G)=\mathrm{Soc}(\mathrm{F}(G)C_G(\mathrm{F}(G))/\mathrm{F}(G)).$$
Another generalization $\tilde{\mathrm{F}}(G)$ of the Fitting subgroup    was introduced by P. Schmid \cite{f3} and L.A.~Shemetkov
 \cite[Definition 7.5]{f4}. This subgroup is defined by
 $$ \Phi(G)\subseteq \tilde{\mathrm{F}}(G) \textrm{ and
  } \tilde{\mathrm{F}}(G)/\Phi(G)=\mathrm{Soc}(G/\Phi(G)).$$
  P. F\"orster \cite{F2} showed that $\tilde{\mathrm{F}}(G)$ can also be defined by
 $ \tilde{\mathrm{F}}(G)/\Phi(G)=\mathrm{F}^*(G/\Phi(G)).$
Note that $\mathrm{F}(G)=\mathrm{F}^*(G)=\mathrm{\tilde F}(G)$ for a soluble group $G$.  It is well known that $C_G(\mathrm{F}(G))\subseteq\mathrm{F}(G)$  for every soluble group $G$. In the universe of all groups $\mathrm{F}(G)$ does not have this property but $\mathrm{F}^*(G)$ and $\tilde{\mathrm{F}}(G)$  have.

Let $\mathfrak{F}$ be a formation.  Recall \cite[Definition 6.1.4]{s9} that a subgroup $H$ of  $G$ is called $K$-$\mathfrak{F}$-\emph{subnormal} in $G$ if there is a chain
$ H=H_0\subseteq H_1\subseteq\dots\subseteq H_n=G$
with $H_{i-1}\trianglelefteq H_i$ or $H_{i}/\mathrm{Core}_{H_{i}}(H_{i-1})\in\mathfrak{F}$ for all $i=1,\dots,n$. Denoted by $H\,K$-$\mathfrak{F}$-$sn\,G$.

If $\mathfrak{F}=\mathfrak{N}$ is the formation of all nilpotent groups, then the notions of  $K$-$\mathfrak{F}$-subnormal and subnormal subgroups coincide. Groups with different systems of $K$-$\mathfrak{F}$-subnormal are the main object of many papers (for example, see \cite{vF, gs3, wF,sp4}).
The main idea of this paper is to consider  $K$-$\mathfrak{F}$-subnormality of a subgroup not in the whole group but in some subgroup related to some generalization of the Fitting subgroup in the sense of the following definition:

\begin{definition}\label{de1}
   Let $\mathfrak{F}$ be a formation and $R$ be a subgroup of a group  $G$. We shall call a subgroup   $H$ of $G$  $R$-$K$-$\mathfrak{F}$-\emph{subnormal} if $H$ is $K$-$\mathfrak{F}$-subnormal in $\langle H, R\rangle$.
 \end{definition}

If $\mathfrak{F}=\mathfrak{N}$, then we just obtain the notion of   $R$-subnormal subgroup. In \cite{MonKon, MonChir, mv2, mv1, mv3} the products of $R$-subnormal subgroups were studied for   $R\in\{\mathrm{F}(G), \mathrm{F}^*(G)\}$. It was shown that if  $G$ is the product of two nilpotent (resp. quasinilpotent) $\mathrm{F}(G)$-subnormal (resp. $\mathrm{F}^*(G)$-subnormal) subgroups, then it is nilpotent (resp. quasinilpotent).

It is well known that a group is nilpotent if all its maximal or all its Sylow subgroups are subnormal. In  \cite{Arxiv1} it was shown that we can replace subnormality in this result by $\tilde{\mathrm{F}}(G)$-subnormality for maximal subgroups and  $\mathrm{F}^*(G)$-subnormality for Sylow subgroups. The main results of this paper are

\begin{theorem}\label{t1.0}
  Let   $\mathfrak{F}$ be a formation. Then

  $(1)$ $\mathfrak{F}$ contains every group $G$ all whose maximal subgroups are  $\tilde{\mathrm{F}}(G)$-$K$-$\mathfrak{F}$-subnormal iff $\mathfrak{F}$ is saturated and contains $\mathfrak{N}$.

  $(2)$ $\mathfrak{F}$ contains every group $G$ all whose maximal subgroups are  $\mathrm{F}^*(G)$-$K$-$\mathfrak{F}$-subnormal iff $\mathfrak{F}$ is the class of all groups.
\end{theorem}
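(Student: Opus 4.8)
For part $(1)$, the plan is to prove both directions separately. For the "if" direction, assume $\mathfrak{F}$ is saturated and contains $\mathfrak{N}$, and let $G$ be a counterexample of minimal order all of whose maximal subgroups are $\tilde{\mathrm{F}}(G)$-$K$-$\mathfrak{F}$-subnormal. Since $\mathfrak{F}$ is saturated, it suffices to show $G/\Phi(G)\in\mathfrak{F}$, so I would first reduce to the case $\Phi(G)=1$, where $\tilde{\mathrm{F}}(G)=\mathrm{Soc}(G)=\mathrm{F}^*(G)$. If $G$ has a unique minimal normal subgroup $N$, then $\tilde{\mathrm{F}}(G)=N$; I would take a maximal subgroup $M$ with $G=MN$ (which exists since $\Phi(G)=1$), note $\langle M,N\rangle=G$, so $M$ is $K$-$\mathfrak{F}$-subnormal in $G$, and then use the known structure of $K$-$\mathfrak{F}$-subnormal maximal subgroups (a maximal subgroup is $K$-$\mathfrak{F}$-subnormal in $G$ iff $G/\mathrm{Core}_G(M)\in\mathfrak{F}$) together with the fact that $\mathfrak{N}\subseteq\mathfrak{F}$ handles the case where $N$ is abelian, and properties of the $\mathfrak{F}$-residual handle the nonabelian case. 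The general case (several minimal normal subgroups) is dispatched by considering $G/N$ for a minimal normal $N$, applying minimality, and using that formations are closed under subdirect products. For the "only if" direction, I would exhibit obstructions: if $\mathfrak{F}$ does not contain $\mathfrak{N}$, a suitable nilpotent group (e.g. a group of prime order $p$ with $p$-group on top, or more carefully $C_p$ itself where every maximal subgroup is trivial hence subnormal) is a counterexample; if $\mathfrak{F}$ is not saturated, one uses a group $G$ with $G/\Phi(G)\in\mathfrak{F}$ but $G\notin\mathfrak{F}$, checking that in such a $G$ all maximal subgroups turn out $\tilde{\mathrm{F}}(G)$-$K$-$\mathfrak{F}$-subnormal because $\tilde{\mathrm{F}}(G)$ contains $\Phi(G)$ and the quotient behaves well.

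For part $(2)$, the "if" direction is trivial: if $\mathfrak{F}$ is the class of all groups, it contains everything. So the content is the "only if" direction: assuming $\mathfrak{F}$ contains every group all of whose maximal subgroups are $\mathrm{F}^*(G)$-$K$-$\mathfrak{F}$-subnormal, I must show $\mathfrak{F}$ is the class of all groups. The key observation I would exploit is that $\mathrm{F}^*(G)$ is large enough to "absorb" maximal subgroups in many simple-ish groups: specifically, for a nonabelian simple group $S$, one has $\mathrm{F}^*(S)=S$, so every subgroup $H$ satisfies $\langle H,\mathrm{F}^*(S)\rangle=S$, and $H$ is $K$-$\mathfrak{F}$-subnormal in $S$ iff $S/\mathrm{Core}_S(H)\in\mathfrak{F}$, i.e. iff $S\in\mathfrak{F}$ (since $\mathrm{Core}_S(H)=1$ for proper $H$). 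This seems to create a circularity, so the real trick must be to build, for an arbitrary group $X$, a larger group $G$ in which every maximal subgroup is automatically $\mathrm{F}^*(G)$-$K$-$\mathfrak{F}$-subnormal and from which membership $G\in\mathfrak{F}$ forces $X\in\mathfrak{F}$ — for instance via a regular wreath product or a direct/semidirect construction where $\mathrm{F}^*(G)$ meets every maximal subgroup in a way that makes the $K$-$\mathfrak{F}$-subnormality condition vacuous (because $\langle M,\mathrm{F}^*(G)\rangle=G$ and $\mathrm{Core}_G(M)$ has the right quotient). A natural candidate is to take $G = X \times S$ or a twisted version with $S$ simple nonabelian, analyze maximal subgroups (which are either $M_0\times S$ with $M_0$ maximal in $X$, or $X\times 1$-type, or diagonal), and check each is $\mathrm{F}^*(G)$-$K$-$\mathfrak{F}$-subnormal using $\mathrm{F}^*(X\times S)\supseteq S$.

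The main obstacle, and the step I expect to require the most care, is the "only if" direction of part $(2)$: designing the auxiliary group $G$ so that simultaneously (a) every maximal subgroup $M$ of $G$ satisfies the $\mathrm{F}^*(G)$-$K$-$\mathfrak{F}$-subnormality condition unconditionally — which forces $\langle M, \mathrm{F}^*(G)\rangle$ to be small enough, or the relevant core-quotient to lie in $\mathfrak{F}$ by an already-available structural argument — and (b) $G\in\mathfrak{F}$ genuinely entails $X\in\mathfrak{F}$, using only that $\mathfrak{F}$ is a formation (closed under quotients and subdirect products). Getting the wreath-product or extension bookkeeping exactly right, including the diagonal-type maximal subgroups and the interplay between $\mathrm{Core}_G(M)$ and the components of $\mathrm{F}^*(G)$, is where the argument will live or die; I would isolate this as a separate lemma about maximal subgroups of $X \wr S$ or $X \times S$ and their cores.
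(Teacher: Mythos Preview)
Your treatment of the ``only if'' direction in part~$(1)$ matches the paper's. For the ``if'' direction, however, the paper does \emph{not} run a minimal-counterexample reduction. Instead it observes directly that the hypothesis forces $\tilde{\mathrm{F}}(G)\leq\Delta_\mathfrak{F}(G)$ (the intersection of cores of the $\mathfrak{F}$-abnormal maximals), invokes the identity $\Delta_\mathfrak{F}(G)/\Phi(G)=\mathrm{Z}_\mathfrak{F}(G/\Phi(G))$, and then applies the key structural fact (Proposition~\ref{pr0}) that $\mathrm{F}^*(G)\leq\mathrm{Z}_\mathfrak{F}(G)$ forces $G\in\mathfrak{F}$ for saturated $\mathfrak{F}$. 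Your plan to pass to $G/N$ ``by minimality'' has a real gap: the hypothesis that every maximal subgroup is $\tilde{\mathrm{F}}(G)$-$K$-$\mathfrak{F}$-subnormal does not obviously descend to $G/N$, because $\tilde{\mathrm{F}}(G/N)$ can strictly exceed $\tilde{\mathrm{F}}(G)N/N$, so a maximal $M$ with $\tilde{\mathrm{F}}(G)\leq M$ but $\tilde{\mathrm{F}}(G/N)\not\leq M/N$ is not covered. One can repair this (e.g.\ with a direct subdirect-product argument on the cores $\mathrm{Core}_G(M_i)$ rather than via induction), but the hypercenter route is both shorter and avoids the issue entirely.

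For part~$(2)$ your proposed constructions do not work, and you have essentially diagnosed why: any direct or wreath product $X\times S$ or $X\wr S$ with $S$ nonabelian simple will have $S$ (or the base) inside $\mathrm{F}^*$, so some maximal subgroup $M$ will satisfy $\langle M,\mathrm{F}^*\rangle = G$ with $G/\mathrm{Core}_G(M)$ having $S$ as a composition factor --- and you cannot conclude this lies in $\mathfrak{F}$ without already knowing $S\in\mathfrak{F}$. The paper sidesteps this circularity by a different mechanism: it arranges that $\mathrm{F}^*$ is contained in \emph{every} maximal subgroup, so the $K$-$\mathfrak{F}$-subnormality condition becomes vacuous. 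Concretely, set $W=H\wr_{\mathrm{reg}} G$ for any nonabelian simple $H$; then $W$ has a unique minimal normal subgroup, and by the Griess--Schmid Frattini-module theorem there is a faithful irreducible $\mathbb{F}_pW$-module $A$ and an extension $E$ with $\Phi(E)\cong A$ and $E/\Phi(E)\cong W$. Since $E/\Phi(E)$ has nonabelian socle and $A$ is faithful, one gets $\mathrm{F}^*(E)=\Phi(E)$, hence every maximal subgroup of $E$ contains $\mathrm{F}^*(E)$ and is trivially $\mathrm{F}^*(E)$-$K$-$\mathfrak{F}$-subnormal. The hypothesis then gives $E\in\mathfrak{F}$, so $G\in\mathfrak{F}$ as a quotient. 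This Frattini-extension trick is the missing idea in your plan.
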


\begin{corollary}[\cite{Arxiv1}]
A group $G$ is nilpotent if and only if every its maximal subgroup is  $\tilde{\mathrm{F}}(G)$-subnormal.
\end{corollary}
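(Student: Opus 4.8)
The plan is to read the corollary off Theorem~\ref{t1.0}(1) applied to the particular formation $\mathfrak{F}=\mathfrak{N}$. First I would perform the two routine reductions that connect the two statements. Since for $\mathfrak{F}=\mathfrak{N}$ the notion of $K$-$\mathfrak{F}$-subnormality coincides with ordinary subnormality (as already observed in the introduction), Definition~\ref{de1} specializes to the assertion that $H$ is $R$-$K$-$\mathfrak{N}$-subnormal exactly when $H$ is subnormal in $\langle H,R\rangle$; in particular ``$\tilde{\mathrm F}(G)$-subnormal'' in the corollary is literally ``$\tilde{\mathrm F}(G)$-$K$-$\mathfrak{N}$-subnormal''. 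Next I would check that $\mathfrak{N}$ satisfies the hypotheses on the right-hand side of Theorem~\ref{t1.0}(1): it trivially contains $\mathfrak{N}$, and it is saturated, being the local formation with screen $f(p)=(1)$ for every prime $p$ — equivalently, $G/\Phi(G)$ nilpotent forces $G$ nilpotent.

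Granting these facts, Theorem~\ref{t1.0}(1) gives the nontrivial implication at once: if every maximal subgroup of $G$ is $\tilde{\mathrm F}(G)$-subnormal, then $G\in\mathfrak{N}$, i.e. $G$ is nilpotent. For the converse I would argue directly. Suppose $G$ is nilpotent and let $H$ be a maximal subgroup. Every subgroup of a finite nilpotent group is subnormal, and subnormality is inherited by intermediate overgroups, so $H$ is subnormal in $\langle H,\tilde{\mathrm F}(G)\rangle$; hence $H$ is $\tilde{\mathrm F}(G)$-subnormal. (Alternatively, for nilpotent $G$ the quotient $G/\Phi(G)$ is a direct product of elementary abelian groups, hence equals its own socle, so $\tilde{\mathrm F}(G)=G$, $\langle H,\tilde{\mathrm F}(G)\rangle=G$, and $H\trianglelefteq G$.)

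I do not expect any genuine obstacle in the corollary itself: all of its content is carried by Theorem~\ref{t1.0}(1), and what remains is bookkeeping — recognizing that the hypotheses ``saturated'' and ``$\supseteq\mathfrak{N}$'' are met by $\mathfrak{N}$, and that the general $R$-$K$-$\mathfrak{F}$-subnormality reduces, for $\mathfrak{F}=\mathfrak{N}$, to the $\tilde{\mathrm F}(G)$-subnormality named in the statement. The only mildly delicate point is recalling why $\mathfrak{N}$ is saturated, and in the converse direction the standard facts that nilpotency is equivalent to subnormality of all subgroups and that subnormality persists in overgroups.
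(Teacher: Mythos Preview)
Your proposal is correct and matches the paper's treatment: the paper gives no separate proof of this corollary, presenting it as an immediate specialization of Theorem~\ref{t1.0}(1) to $\mathfrak{F}=\mathfrak{N}$, exactly as you do. Your handling of both directions (including the routine converse via subnormality of all subgroups in a nilpotent group, or via $\tilde{\mathrm F}(G)=G$) is sound and nothing further is needed.
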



\begin{corollary}[Kramer \cite{f5}]\label{kr}
   A soluble group $G$ is supersoluble if and only if   $\mathrm{F}(G)\leq M$ or $M\cap\mathrm{F}(G)$ is a maximal subgroup of $\mathrm{F}(G)$ for every maximal subgroup $M$ of $G$. \end{corollary}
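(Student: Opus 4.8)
The plan is to derive this from Theorem~\ref{t1.0}(1) applied to the formation $\mathfrak{U}$ of all supersoluble groups, which is saturated and contains $\mathfrak{N}$. Since $G$ is soluble we have $\tilde{\mathrm{F}}(G)=\mathrm{F}(G)$, so a subgroup $M$ is $\tilde{\mathrm{F}}(G)$-$K$-$\mathfrak{U}$-subnormal exactly when it is $K$-$\mathfrak{U}$-subnormal in $\langle M,\mathrm{F}(G)\rangle$; and if $M$ is maximal in $G$, this ambient subgroup is $M$ itself when $\mathrm{F}(G)\leq M$ and the whole of $G$ otherwise. Hence the task reduces to showing that Kramer's condition is equivalent to requiring every maximal subgroup $M$ of $G$ with $\mathrm{F}(G)\not\leq M$ to be $K$-$\mathfrak{U}$-subnormal in $G$, and then invoking Theorem~\ref{t1.0}(1).

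Assume Kramer's condition and let $M$ be maximal in $G$ with $\mathrm{F}(G)\not\leq M$. Then $\mathrm{F}(G)\cap M$ is a maximal subgroup of the nilpotent group $\mathrm{F}(G)$, hence has prime index $q$, while $\mathrm{F}(G)M=G$ by maximality of $M$; so $|G:M|=|\mathrm{F}(G):\mathrm{F}(G)\cap M|=q$. Setting $N=\mathrm{Core}_G(M)$, the quotient $G/N$ is a soluble primitive group whose point stabiliser $M/N$ has prime index $q$, so its unique minimal normal subgroup $V/N$ is self-centralising in $G/N$ and of order $q$, and $G/N=(V/N)\rtimes(M/N)$ with $M/N$ embedded in $\mathrm{Aut}(V/N)\cong\mathbb{Z}_{q-1}$; thus $G/N$ is metacyclic and therefore supersoluble. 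So $G/\mathrm{Core}_G(M)\in\mathfrak{U}$, and the one-step chain $M\subseteq G$ witnesses that $M$ is $K$-$\mathfrak{U}$-subnormal in $G=\langle M,\mathrm{F}(G)\rangle$. Hence every maximal subgroup of $G$ is $\tilde{\mathrm{F}}(G)$-$K$-$\mathfrak{U}$-subnormal, and Theorem~\ref{t1.0}(1) gives $G\in\mathfrak{U}$. Conversely, if $G$ is supersoluble a routine induction on $|G|$ (pick a minimal normal subgroup $L$, which has prime order; if $L\leq M$ pass to $G/L$, otherwise $LM=G$, $L\cap M=1$ and $|G:M|=|L|$) shows every maximal $M$ has prime index, whence for $\mathrm{F}(G)\not\leq M$ we get $\mathrm{F}(G)M=G$ and $|\mathrm{F}(G):\mathrm{F}(G)\cap M|=|G:M|$ prime, i.e.\ $\mathrm{F}(G)\cap M$ is maximal in $\mathrm{F}(G)$ --- Kramer's condition.

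The step I expect to require the most care is the identification of $G/\mathrm{Core}_G(M)$ in the forward implication: one must recognise that a soluble primitive group with a point stabiliser of prime index is metacyclic, hence already supersoluble, so that a \emph{single}-step $K$-$\mathfrak{U}$-subnormal chain $M\subseteq G$ suffices. The rest is a routine unwinding of Definition~\ref{de1} together with the standard facts that maximal subgroups of nilpotent and of supersoluble groups have prime index and that $\mathrm{F}(G)M=G$ gives $|G:M|=|\mathrm{F}(G):\mathrm{F}(G)\cap M|$.
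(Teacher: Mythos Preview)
Your proposal is correct and follows essentially the same approach as the paper: deduce that a maximal subgroup $M$ not containing $\mathrm{F}(G)$ has prime index, conclude it is $K$-$\mathfrak{U}$-subnormal in $G=\langle M,\tilde{\mathrm{F}}(G)\rangle$, and apply Theorem~\ref{t1.0}(1). The paper obtains $K$-$\mathfrak{U}$-subnormality by citing \cite[Lemma 3.4]{gs3}, whereas you unpack this by analysing the primitive quotient $G/\mathrm{Core}_G(M)$ directly; you also supply the easy converse, which the paper omits.
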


In the paper \cite{30} another generalization of Kramer's result was obtained with the help of $\tilde{\mathrm{F}}(G)$. In the spirit of Definition \ref{de1} their result can be reformulated in the following way: \flqq A group $G$ is supersoluble if and only if every its maximal subgroup is $\tilde{\mathrm{F}}(G)$-$\mathbb{P}$-subnormal\frqq. Note that $\mathbb{P}$-subnormality and $K$-$\mathfrak{U}$-subnormality coincide in the class of all soluble groups. In the general case these notions are different \cite{gs3}.

Let $\sigma=\{\pi_i\,|\,i\in I\}$ be a partition of the  set $\mathbb{P}$ of all primes. According to A.N. Skiba \cite{sp4}, a group $G$ is called $\sigma$-\emph{nilpotent}  if  $G$ has a normal Hall $\pi_i$-subgroup for every $i\in I$ with $\pi(G)\cap\pi_i\neq\emptyset$. The class of all $\sigma$-nilpotent groups is denoted by  $\mathfrak{N}_\sigma$. This class is a very interesting generalization of the class of nilpotent groups and widely studied.

   \begin{theorem}\label{t1.1} Let  $\mathfrak{F}$ be a hereditary formation. The following statements are equivalent:

  $(1)$ $\mathfrak{F}$ contains every group  $G$ all whose cyclic primary subgroups are  $\mathrm{F}^*(G)$-$K$-$\mathfrak{F}$-subnormal.

 $(2)$  $\mathfrak{F}$ contains every group  $G$ all whose Sylow subgroups are  $\mathrm{F}^*(G)$-$K$-$\mathfrak{F}$-subnormal.

 $(3)$ There is a partition   $\sigma$ of $\mathbb{P}$ such that $\mathfrak{F}$ is the class of all  $\sigma$-nilpotent groups.      \end{theorem}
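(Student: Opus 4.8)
The plan is to prove the cycle $(3)\Rightarrow(1)\Rightarrow(2)\Rightarrow(3)$. Throughout I would use the standard behaviour of $K$-$\mathfrak{F}$-subnormality for a hereditary formation: it is transitive, it is inherited by intermediate overgroups (if $A$ is $K$-$\mathfrak{F}$-subnormal in $C$ and $A\le B\le C$, then $A$ is $K$-$\mathfrak{F}$-subnormal in $B$), it is preserved by homomorphic images, and $K$-$\mathfrak{F}$-subnormality of $H$ in $G$ is equivalent to that of $H/N$ in $G/N$ whenever $N\trianglelefteq G$ and $N\le H$ (Ballester-Bolinches--Ezquerro, Chapter 6). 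I would also use freely the two facts recalled in the Introduction, $C_G(\mathrm{F}^*(G))\le\mathrm{F}^*(G)$ and $\mathrm{F}^*(G)N/N\le\mathrm{F}^*(G/N)$, that subnormal subgroups are $K$-$\mathfrak{F}$-subnormal, and Skiba's theorem that $\mathfrak{N}_\sigma$ is a hereditary saturated formation. The implication $(1)\Rightarrow(2)$ is then immediate: if $x$ is a $p$-element of $G$ and $P\in\mathrm{Syl}_p(G)$ contains $\langle x\rangle$, then $\langle x\rangle$ is subnormal in $P$, hence $K$-$\mathfrak{F}$-subnormal in $P$; composing with the assumed $K$-$\mathfrak{F}$-subnormality of $P$ in $\langle P,\mathrm{F}^*(G)\rangle=P\mathrm{F}^*(G)$ and restricting to $\langle x\rangle\mathrm{F}^*(G)$ shows that the class of groups occurring in $(2)$ is contained in the class occurring in $(1)$.

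For $(3)\Rightarrow(1)$ I would take a counterexample $G$ of least order: all cyclic primary subgroups are $\mathrm{F}^*(G)$-$K$-$\mathfrak{N}_\sigma$-subnormal but $G\notin\mathfrak{N}_\sigma$. The hypothesis passes to quotients, so every proper quotient of $G$ is $\sigma$-nilpotent; since $\mathfrak{N}_\sigma$ is a saturated formation, $G$ is monolithic with socle $N=G^{\mathfrak{N}_\sigma}$ and $\Phi(G)=1$. Applying the hypothesis to the cyclic primary subgroups contained in $\mathrm{F}^*(G)$ and projecting onto the simple sections of $\mathrm{F}^*(G)$, one sees that each such section is $\sigma$-primary (a nontrivial cyclic subgroup of a nonabelian simple group is $K$-$\mathfrak{N}_\sigma$-subnormal only if that simple group lies in $\mathfrak{N}_\sigma$), whence $\mathrm{F}^*(G)\in\mathfrak{N}_\sigma$. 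Being a minimal normal subgroup of $G$ inside the $\sigma$-nilpotent group $\mathrm{F}^*(G)$, $N$ is a $\pi_i$-group for some $i$; and the Hall $\pi_j$-subgroups ($j\ne i$) of $\mathrm{F}^*(G)$ are characteristic in it, hence normal in $G$, and cannot be nontrivial (they would contain $N$), so $\mathrm{F}^*(G)$ is a $\pi_i$-group. As $G\notin\mathfrak{N}_\sigma$ it has an element $x$ of prime order $q\notin\pi_i$. The main computation is now to take a shortest chain $\langle x\rangle=K_0\le\cdots\le K_n=\langle x\rangle\mathrm{F}^*(G)=\mathrm{F}^*(G)\rtimes\langle x\rangle$ witnessing the $K$-$\mathfrak{N}_\sigma$-subnormality of $\langle x\rangle$ and to ascend it, proving inductively that $x$ centralizes $K_r\cap\mathrm{F}^*(G)$: at each step either $K_{r-1}\trianglelefteq K_r$ or $K_r/\mathrm{Core}_{K_r}(K_{r-1})\in\mathfrak{N}_\sigma$, and since $q\notin\pi_i$ the latter quotient is the direct product of its $\pi_i$-part and its $q$-part; in both cases $[K_r\cap\mathrm{F}^*(G),x]$ lands inside $K_{r-1}\cap\mathrm{F}^*(G)$, which $x$ centralizes by induction, and coprime action of the $q$-element $x$ on the $\pi_i$-group $K_r\cap\mathrm{F}^*(G)$ then forces $[K_r\cap\mathrm{F}^*(G),x]=1$. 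At the top this gives $[\mathrm{F}^*(G),x]=1$, so $x\in C_G(\mathrm{F}^*(G))\le\mathrm{F}^*(G)$, contradicting that $\mathrm{F}^*(G)$ is a $\pi_i$-group and $q\notin\pi_i$.

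The remaining implication $(2)\Rightarrow(3)$ carries the real weight. First $\mathfrak{N}\subseteq\mathfrak{F}$, since a nilpotent group equals its own generalized Fitting subgroup and has normal Sylow subgroups, hence lies in the class of $(2)$. Then I would let $\sigma$ be the partition of $\mathbb{P}$ whose cells are the classes of the transitive closure of the relation ``$p$ and $q$ belong together whenever $\mathfrak{F}$ contains a non-nilpotent $\{p,q\}$-group'', and prove $\mathfrak{F}=\mathfrak{N}_\sigma$. For $\mathfrak{F}\subseteq\mathfrak{N}_\sigma$ one argues by induction on $|G|$ that every $G\in\mathfrak{F}$ is $\sigma$-nilpotent, the point being that hereditariness and the $(2)$-closure of $\mathfrak{F}$ forbid a $q$-element with $q\notin\pi_i$ from acting nontrivially on a $\pi_i$-chief factor of $G$. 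For $\mathfrak{N}_\sigma\subseteq\mathfrak{F}$ one shows, again by induction on order (using closure under subdirect products, and that $\mathfrak{F}$ must first be shown to be saturated so as to dispose of Frattini extensions), that every $\sigma$-primary group lies in $\mathfrak{F}$; this reduces to placing the nonabelian simple $\sigma$-primary groups and the quasinilpotent $\sigma$-primary groups in $\mathfrak{F}$ by exhibiting each of them, or a group forcing it, inside the class of $(2)$. I expect the genuine obstacle to be exactly this direction: establishing that $\mathfrak{F}$ is saturated, that the relation above is transitive on the primes of $\mathfrak{F}$-groups, and that every $\sigma$-primary simple group already belongs to $\mathfrak{F}$ --- that is, identifying precisely which bi-primary and which simple groups the condition ``all Sylow subgroups $\mathrm{F}^*(G)$-$K$-$\mathfrak{F}$-subnormal'' tolerates, and matching this description with $\mathfrak{N}_\sigma$.
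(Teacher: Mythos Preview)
Your $(1)\Rightarrow(2)$ matches the paper. The other two implications differ from the paper's route, and each has a concrete issue worth naming.

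\medskip

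\textbf{On $(3)\Rightarrow(1)$.} Your coprime chain argument is correct and is a genuinely more elementary substitute for the paper's machinery: the paper instead proves a general characterization of the $\mathfrak{N}_\sigma$-hypercenter (Theorem~\ref{gb}) via the auxiliary subgroups $S_\mathfrak{F}(G)$, $C_\mathfrak{F}(G)$ and $\mathrm{Int}_{\overline{w}\mathfrak{F}}(G)$, deduces $\mathrm{F}^*(G)\le\mathrm{Z}_{\mathfrak{N}_\sigma}(G)$ from the hypothesis, and then invokes Proposition~\ref{pr0}. Your direct approach avoids all of this. However, the reduction to a monolithic group is not justified: the claim ``the hypothesis passes to quotients'' fails in general, because $\mathrm{F}^*(G)N/N$ can be strictly smaller than $\mathrm{F}^*(G/N)$, so $K$-$\mathfrak{N}_\sigma$-subnormality of $\langle x\rangle N/N$ in $\langle x\rangle\mathrm{F}^*(G)N/N$ does not give it in $\langle x\rangle\mathrm{F}^*(G/N)$. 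The fix is to drop the monolithic reduction entirely. Your argument that $\mathrm{F}^*(G)\in\mathfrak{N}_\sigma$ needs no minimality; write $\mathrm{F}^*(G)=\prod_i H_i$ with $H_i$ its (characteristic, hence $G$-normal) Hall $\pi_i$-subgroup, and run your chain argument inside $\langle x\rangle H_i$ (using hereditariness to restrict) to show every $p$-element with $p\notin\pi_i$ centralizes $H_i$. Then $G/C_G(H_i)$ is a $\pi_i$-group, every $G$-chief factor below $H_i$ is $\mathfrak{N}_\sigma$-central, so $\mathrm{F}^*(G)\le\mathrm{Z}_{\mathfrak{N}_\sigma}(G)$ and Proposition~\ref{pr0} finishes.

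\medskip

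\textbf{On $(2)\Rightarrow(3)$.} You correctly locate the weight here, and your partition via the transitive closure of ``$\mathfrak{F}$ contains a non-nilpotent $\{p,q\}$-group'' is essentially the paper's $N$-critical graph $\Gamma_{Nc}(\mathfrak{F})$. What is missing is the single technical device that drives every step of the paper's proof (saturation, symmetry of the relation, transitivity of the relation, and $\mathfrak{G}_{\pi_i}\subseteq\mathfrak{F}$): given a target group $G$ with $\mathrm{O}_p(G)=1$ for some prime $p$, take a faithful irreducible $\mathbb{F}_pG$-module $L$ (Lemma~\ref{10.3B}) and form $T=L\rtimes G$; then $\mathrm{F}^*(T)=L$, so one only has to arrange that each Sylow subgroup $Q$ of $T$ has $QL\in\mathfrak{F}$ (a biprimary condition already controlled by the graph), whence $T$ satisfies the hypothesis of $(2)$, $T\in\mathfrak{F}$, and $G\in\mathfrak{F}$ as a quotient. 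This construction is what you need to show the relation is already transitive (not merely take its closure), to force all $\{p,q\}$-groups with $p,q$ in the same block into $\mathfrak{F}$, and finally to place every $\pi_i$-group in $\mathfrak{F}$. For saturation the paper does not argue directly either: it observes that hypothesis $(2)$ forces $\mathfrak{F}=\overline{w}\mathfrak{F}$ and then shows $\overline{w}\mathfrak{F}$ is $Z$-saturated by a short minimal-counterexample argument using $\Phi(G)$. Your sketch has the right shape but does not yet contain this module construction, and without it the transitivity step and the inclusion $\mathfrak{G}_{\pi_i}\subseteq\mathfrak{F}$ have no engine.
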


   The next result follows from the previous theorem.

   \begin{theorem}\label{new} Let  $\mathfrak{F}$ be a hereditary formation. The following statements are equivalent:

  $(1)$ $\mathfrak{F}$ contains every group $G=AB$ where all cyclic primary subgroups of $A$ and $B$ are  $\mathrm{F}^*(G)$-$K$-$\mathfrak{F}$-subnormal.

 $(2)$ $\mathfrak{F}$ contains every group $G=AB$ where all Sylow subgroups of $A$ and $B$ are  $\mathrm{F}^*(G)$-$K$-$\mathfrak{F}$-subnormal.

 $(3)$ There is a partition   $\sigma$ of $\mathbb{P}$ such that $\mathfrak{F}$ is the class of all  $\sigma$-nilpotent groups.      \end{theorem}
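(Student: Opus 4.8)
The plan is to establish the cycle $(1)\Rightarrow(2)\Rightarrow(3)\Rightarrow(1)$, where the first two links are short reductions and the work sits in $(3)\Rightarrow(1)$.

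For $(1)\Rightarrow(2)$ it is enough to check that the hypothesis of $(2)$ implies that of $(1)$. Let $P$ be a Sylow subgroup of $A$ (the case of $B$ being identical) and $x\in P$. Since $P$ is nilpotent, $\langle x\rangle$ is subnormal, hence $K$-$\mathfrak{F}$-subnormal, in $P$; as $P$ is $\mathrm{F}^*(G)$-$K$-$\mathfrak{F}$-subnormal, transitivity gives that $\langle x\rangle$ is $K$-$\mathfrak{F}$-subnormal in $\langle P,\mathrm{F}^*(G)\rangle$, and since $\mathfrak{F}$ is hereditary, $K$-$\mathfrak{F}$-subnormality passes to intersections with subgroups, so $\langle x\rangle=\langle x\rangle\cap\langle x,\mathrm{F}^*(G)\rangle$ is $K$-$\mathfrak{F}$-subnormal in $\langle x,\mathrm{F}^*(G)\rangle$. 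Hence every $G=AB$ satisfying the hypothesis of $(2)$ satisfies that of $(1)$, so $(1)$ forces $G\in\mathfrak{F}$. For $(2)\Rightarrow(3)$ one specializes to $B=1$ and $A=G$: then $(2)$ asserts exactly that $\mathfrak{F}$ contains every group all of whose Sylow subgroups are $\mathrm{F}^*(G)$-$K$-$\mathfrak{F}$-subnormal, which is statement $(2)$ of Theorem \ref{t1.1}, equivalent there to $(3)$.

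For $(3)\Rightarrow(1)$ assume $\mathfrak{F}=\mathfrak{N}_\sigma$ and let $G=AB$ with all cyclic primary subgroups of $A$ and of $B$ being $\mathrm{F}^*(G)$-$K$-$\mathfrak{N}_\sigma$-subnormal. The plan is to deduce that \emph{every} cyclic primary subgroup of $G$ is $\mathrm{F}^*(G)$-$K$-$\mathfrak{N}_\sigma$-subnormal; statement $(1)$ of Theorem \ref{t1.1}, which holds since $\mathfrak{F}=\mathfrak{N}_\sigma$, then gives $G\in\mathfrak{N}_\sigma$. First, the property ``$\langle g\rangle$ is $\mathrm{F}^*(G)$-$K$-$\mathfrak{N}_\sigma$-subnormal'' is invariant under $G$-conjugation, because $\mathrm{F}^*(G)\trianglelefteq G$ and $K$-$\mathfrak{N}_\sigma$-subnormality is preserved by isomorphisms. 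Fix a prime $p$ and, by the standard factorization lemma, choose a Sylow $p$-subgroup $P$ of $G$ with $A_p:=P\cap A\in\mathrm{Syl}_p(A)$, $B_p:=P\cap B\in\mathrm{Syl}_p(B)$ and $P=A_pB_p$. Put $W:=\langle P,\mathrm{F}^*(G)\rangle=P\,\mathrm{F}^*(G)$; then $W/\mathrm{F}^*(G)$ is a $p$-group, so every subgroup of $W$ containing $\mathrm{F}^*(G)$ is $K$-$\mathfrak{N}_\sigma$-subnormal in $W$. Combining this with the hypothesis and transitivity, each cyclic subgroup of $A_p$ and of $B_p$ is $K$-$\mathfrak{N}_\sigma$-subnormal in $W$; and since $\mathfrak{N}_\sigma$ is a lattice formation, the $K$-$\mathfrak{N}_\sigma$-subnormal subgroups of $W$ form a sublattice, so $A_p$, $B_p$, and hence $P=\langle A_p,B_p\rangle$, are $K$-$\mathfrak{N}_\sigma$-subnormal in $W$. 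Finally, any $p$-element $g$ of $G$ is conjugate into $P$, and $\langle g\rangle$ is subnormal in $P$, hence $K$-$\mathfrak{N}_\sigma$-subnormal in $W$, hence---by passing to the subgroup $\langle g,\mathrm{F}^*(G)\rangle\le W$---$K$-$\mathfrak{N}_\sigma$-subnormal in $\langle g,\mathrm{F}^*(G)\rangle$. By conjugation-invariance this holds for every $p$-element of $G$, and letting $p$ range over all primes completes $(3)\Rightarrow(1)$.

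The main obstacle is keeping track of the \emph{relative} nature of the subnormality: the cyclic subgroups of $A$ and $B$ are $K$-$\mathfrak{N}_\sigma$-subnormal only inside their own products with $\mathrm{F}^*(G)$, so one must move between these overgroups carefully. Two facts carry the argument: modulo $\mathrm{F}^*(G)$ the group $\langle P,\mathrm{F}^*(G)\rangle$ becomes a $p$-group, in which every subgroup is $K$-$\mathfrak{N}_\sigma$-subnormal; and $\mathfrak{N}_\sigma$ is a lattice formation, which is what lets the $K$-$\mathfrak{N}_\sigma$-subnormal cyclic subgroups of $A_p$ and $B_p$ be assembled into the single Sylow $p$-subgroup $P=A_pB_p$ of $G$ supplied by the factorization lemma. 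The lattice property is the crucial structural input---it is precisely the property that a merely hereditary formation need not have, which is consistent with the forced conclusion $\mathfrak{F}=\mathfrak{N}_\sigma$.
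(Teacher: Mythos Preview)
Your proof is correct and follows essentially the same route as the paper's: both use the Sylow factorization lemma to write $P=A_pB_p$, push the cyclic hypothesis up to $A_p$ and $B_p$ via the lattice property of $\mathfrak{N}_\sigma$, and then assemble $P$ itself as $K$-$\mathfrak{N}_\sigma$-subnormal in $P\mathrm{F}^*(G)$ before invoking Theorem~\ref{t1.1}. The only cosmetic differences are that the paper stops at Sylow subgroups (applying Theorem~\ref{t1.1}(2)) whereas you descend one more step to cyclic primary subgroups (applying Theorem~\ref{t1.1}(1)), and your phrasing ``any $p$-element $g$ of $G$ is conjugate into $P$, and $\langle g\rangle$ is subnormal in $P$'' should strictly read ``$\langle g^h\rangle$ is subnormal in $P$ for a suitable $h$''---but your invocation of conjugation-invariance immediately afterward shows you have the right argument in mind.
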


  Recall that a subgroup $H$ of  $G$ is called  $R$-conjugate-permutable
\cite{31} if $H^rH=HH^r$ for all $r\in R$. If $R=G$, then we obtain the notion of conjugate-permutable subgroup \cite{7}. From $(1)$ of  \cite[Lemma 2.2]{31} it follows that    $\mathrm{F}^*(G)$-conjugate-permutable subgroup is $\mathrm{F}^*(G)$-$K$-$\mathfrak{N}$-subnormal. Hence the main result  of \cite{kit} follows from Theorem   \ref{new}.

\begin{corollary} Let   $A$ and $B$ be a subgroups of a group   $G$ and $G = AB$. If every Sylow subgroup of   $A$  is $B\mathrm{F}^*(G)$-conjugate-permutable and every Sylow subgroup of  $B$   is $A\mathrm{F}^*(G)$-conjugate-permutable, then $G$ is nilpotent.\end{corollary}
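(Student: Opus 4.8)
The plan is to obtain the corollary as a direct specialization of Theorem~\ref{new} to the formation $\mathfrak{F}=\mathfrak{N}$ of all nilpotent groups. First I would note that $\mathfrak{N}$ is a hereditary formation and that $\mathfrak{N}=\mathfrak{N}_\sigma$ for the partition $\sigma=\{\{p\}\mid p\in\mathbb{P}\}$ of $\mathbb{P}$ into singletons: for this $\sigma$, a group is $\sigma$-nilpotent exactly when it has a normal Sylow $p$-subgroup for every $p$ dividing its order, i.e.\ exactly when it is nilpotent. Hence statement $(3)$ of Theorem~\ref{new} holds for $\mathfrak{F}=\mathfrak{N}$, and therefore so does statement $(2)$: every group $G=AB$ in which all Sylow subgroups of $A$ and all Sylow subgroups of $B$ are $\mathrm{F}^*(G)$-$K$-$\mathfrak{N}$-subnormal is nilpotent.

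It then remains to check that the hypotheses of the corollary force exactly this $K$-$\mathfrak{N}$-subnormality condition. Let $P$ be a Sylow subgroup of $A$. Since $1\in B$ we have $\mathrm{F}^*(G)\subseteq B\mathrm{F}^*(G)$, so the identity $P^rP=PP^r$, assumed to hold for every $r\in B\mathrm{F}^*(G)$, holds in particular for every $r\in\mathrm{F}^*(G)$; thus $P$ is $\mathrm{F}^*(G)$-conjugate-permutable. By part $(1)$ of \cite[Lemma~2.2]{31}, recalled just before the statement, this implies that $P$ is $\mathrm{F}^*(G)$-$K$-$\mathfrak{N}$-subnormal. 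The same argument, with the roles of $A$ and $B$ interchanged, shows that every Sylow subgroup of $B$ is $A\mathrm{F}^*(G)$-conjugate-permutable, hence $\mathrm{F}^*(G)$-conjugate-permutable, hence $\mathrm{F}^*(G)$-$K$-$\mathfrak{N}$-subnormal.

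Therefore $G=AB$ satisfies the hypothesis of statement $(2)$ of Theorem~\ref{new} with $\mathfrak{F}=\mathfrak{N}$, and we conclude $G\in\mathfrak{N}$, i.e.\ $G$ is nilpotent. I do not expect a genuine obstacle in this argument: the whole of the group-theoretic substance is already contained in Theorem~\ref{new} (and, behind it, Theorem~\ref{t1.1}), and what is left is only the bookkeeping --- identifying $\mathfrak{N}$ with $\mathfrak{N}_\sigma$ for the singleton partition, observing the trivial monotonicity of conjugate-permutability in the acting subgroup, and quoting the conjugate-permutability-implies-$K$-$\mathfrak{N}$-subnormality fact from \cite{31}. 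The only mild point of attention is to make sure one really only needs $K$-$\mathfrak{N}$-subnormality with respect to $\mathrm{F}^*(G)$ and not with respect to the larger subgroup occurring in the hypothesis, which is precisely what the inclusion $\mathrm{F}^*(G)\subseteq B\mathrm{F}^*(G)$ takes care of.
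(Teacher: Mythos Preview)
Your proposal is correct and follows essentially the same route as the paper: the paper derives the corollary in one sentence by noting (via \cite[Lemma~2.2(1)]{31}) that an $\mathrm{F}^*(G)$-conjugate-permutable subgroup is $\mathrm{F}^*(G)$-$K$-$\mathfrak{N}$-subnormal and then invoking Theorem~\ref{new}. You spell out two bookkeeping steps the paper leaves implicit (that $\mathfrak{N}=\mathfrak{N}_\sigma$ for the singleton partition, and the monotonicity $\mathrm{F}^*(G)\subseteq B\mathrm{F}^*(G)$), but the substance is identical.
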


\section{Preliminaries}

The notation and terminology agree with \cite{s9, s8}. We refer the reader to these
books for the results about formations.

Recall that a \emph{formation} is a class of groups which is closed under taking epimorphic images and subdirect products. A formation $\mathfrak{F}$ is said to be: \emph{saturated}  if $G\in\mathfrak{F}$
whenever $G/\Phi(G)\in\mathfrak{F}$; \emph{hereditary} if $H\in \mathfrak{F}$ whenever $H\leq G\in \mathfrak{F}$. The following two lemmas follow from \cite[Lemmas 6.1.6 and 6.1.7]{s9}.

\begin{lemma}\label{l3.1}  Let $\mathfrak{F}$ be a formation, $H$ and  $R$ be subgroups of a group  $G$ and $N\trianglelefteq G$.

$(1)$     If $H$ $K$-$\mathfrak{F}$-$sn\,G$, then $HN/N$ $K$-$\mathfrak{H}$-$sn\,G/N$.

$(2)$ If  $H/N $~$K$-$\mathfrak{F}$-$sn\,G/N$, then   $H$~$K$-$\mathfrak{F}$-$sn\,G$.


$(3)$   If $H $ $K$-$\mathfrak{F}$-$sn\,R$  and  $R $~$K$-$\mathfrak{H}$-$sn\,G$, then $H$ $K$-$\mathfrak{F}$-$sn\,G$.

\end{lemma}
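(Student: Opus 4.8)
The plan is to prove each part by directly transforming the given $K$-$\mathfrak{F}$-subnormal chain(s) into the required one, verifying that every step retains its type: either a normal inclusion or an inclusion whose core-quotient lies in $\mathfrak{F}$.

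For $(1)$ I would start from a chain $H=H_0\subseteq H_1\subseteq\dots\subseteq H_n=G$ with $H_{i-1}\trianglelefteq H_i$ or $H_i/\mathrm{Core}_{H_i}(H_{i-1})\in\mathfrak{F}$, and project it to $HN/N=H_0N/N\subseteq\dots\subseteq H_nN/N=G/N$. A normal step survives because $N\trianglelefteq G$ forces $H_{i-1}N\trianglelefteq H_iN$. For a step of the second kind, set $C=\mathrm{Core}_{H_i}(H_{i-1})$; then $CN/N\trianglelefteq H_iN/N$ and $CN/N\subseteq H_{i-1}N/N$, so $CN/N$ is contained in $D/N:=\mathrm{Core}_{H_iN/N}(H_{i-1}N/N)$, and hence $(H_iN/N)/(D/N)$ is an epimorphic image of $H_iN/CN\cong H_i/(H_i\cap CN)=H_i/C(H_i\cap N)$, which is itself an epimorphic image of $H_i/C\in\mathfrak{F}$; closure of $\mathfrak{F}$ under epimorphic images then finishes this step. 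The one non-bookkeeping ingredient is the modular-law identity $H_i\cap CN=C(H_i\cap N)$, valid since $C\subseteq H_i$.

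For $(2)$, where the hypothesis forces $N\subseteq H$, I would lift a chain $H/N=L_0/N\subseteq\dots\subseteq L_n/N=G/N$ to $H=L_0\subseteq\dots\subseteq L_n=G$; a normal step lifts verbatim, and for the other kind I would use $\mathrm{Core}_{L_i/N}(L_{i-1}/N)=\mathrm{Core}_{L_i}(L_{i-1})/N$ (correct because $N\trianglelefteq L_i$ and $N\subseteq L_{i-1}$) together with $(L_i/N)/(\mathrm{Core}_{L_i}(L_{i-1})/N)\cong L_i/\mathrm{Core}_{L_i}(L_{i-1})$ to transfer the membership in $\mathfrak{F}$. For $(3)$ I would simply concatenate a chain from $H$ to $R$ with a chain from $R$ to $G$: every step of either chain already satisfies the defining condition relative to its two endpoints, and that condition is intrinsic to those two terms, so the concatenation is a $K$-$\mathfrak{F}$-subnormal chain from $H$ to $G$.

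I expect $(1)$ to be the only part requiring genuine care — specifically, checking that the core of the projected subgroup is not too large (i.e. contains the image of $C$) and then invoking closure of $\mathfrak{F}$ under epimorphic images — while $(2)$ and $(3)$ are routine chain bookkeeping. In any event, all three assertions are instances of \cite[Lemmas 6.1.6 and 6.1.7]{s9}, which one could cite directly instead.
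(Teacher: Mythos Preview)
Your argument is correct. The paper itself gives no proof of this lemma at all but simply records that it follows from \cite[Lemmas~6.1.6 and~6.1.7]{s9}, precisely the citation you mention in your final sentence; your explicit chain-manipulation verification is the standard one and matches what that reference does.
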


\begin{lemma}\label{l3.2}
 Let $\mathfrak{F}$ be a hereditary formation, $H$ and  $R$ be subgroups of a group $G$.

$(1)$  If   $H$~$K$-$\mathfrak{F}$-$sn\,G$, then  $H\cap R $~$K$-$\mathfrak{F}$-$sn\,R$.

$(2)$ If  $H $\,$K$-$\mathfrak{F}$-$sn\,G$ and $R $\,$K$-$\mathfrak{F}$-$sn\,G$, then   $H\cap R $\,$K$-$\mathfrak{F}$-$sn\,G$.
\end{lemma}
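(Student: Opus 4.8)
The plan is to prove $(1)$ by intersecting a chain witnessing $K$-$\mathfrak{F}$-subnormality of $H$ in $G$ with the subgroup $R$, and then to deduce $(2)$ at once from $(1)$ together with the transitivity statement in Lemma~\ref{l3.1}$(3)$.

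For $(1)$, I would fix a chain $H=H_0\leq H_1\leq\dots\leq H_n=G$ in which, for each $i$, either $H_{i-1}\trianglelefteq H_i$ or $H_i/\mathrm{Core}_{H_i}(H_{i-1})\in\mathfrak{F}$, and put $R_i:=H_i\cap R$. Then $R_0=H\cap R$, $R_n=R$, and $R_{i-1}\leq R_i\leq H_i$ for all $i$, so it suffices to verify that each link $R_{i-1}\leq R_i$ is of one of the two admissible types. If $H_{i-1}\trianglelefteq H_i$, then $R_{i-1}=H_{i-1}\cap R_i\trianglelefteq R_i$ since $R_i\leq H_i$. If instead $H_i/C\in\mathfrak{F}$ with $C:=\mathrm{Core}_{H_i}(H_{i-1})$, then $C\trianglelefteq H_i$, so $R_i\cap C\trianglelefteq R_i$; moreover $R_i\cap C\leq H_{i-1}\cap R=R_{i-1}$, hence $R_i\cap C\leq D:=\mathrm{Core}_{R_i}(R_{i-1})$. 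Since $R_iC\leq H_i$ and $C\trianglelefteq R_iC$, the isomorphism $R_i/(R_i\cap C)\cong R_iC/C\leq H_i/C\in\mathfrak{F}$ together with heredity of $\mathfrak{F}$ gives $R_i/(R_i\cap C)\in\mathfrak{F}$, and then, $\mathfrak{F}$ being closed under epimorphic images, also $R_i/D\in\mathfrak{F}$. Thus $(R_i)$ witnesses $H\cap R$~$K$-$\mathfrak{F}$-$sn\,R$.

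For $(2)$: by $(1)$ the subgroup $H\cap R$ is $K$-$\mathfrak{F}$-subnormal in $R$, and $R$ is $K$-$\mathfrak{F}$-subnormal in $G$ by hypothesis, so Lemma~\ref{l3.1}$(3)$ yields $H\cap R$~$K$-$\mathfrak{F}$-$sn\,G$. The only step that needs a little care is the $\mathfrak{F}$-link in $(1)$ — the inclusion $R_i\cap C\leq\mathrm{Core}_{R_i}(R_{i-1})$ and the fact that passing to this core keeps the quotient inside $\mathfrak{F}$ — and this is precisely where the hereditary hypothesis is used; the rest is routine chain bookkeeping, which is consistent with the authors' deriving the lemma directly from \cite[Lemmas 6.1.6 and 6.1.7]{s9}.
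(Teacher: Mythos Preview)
Your argument is correct: intersecting the witnessing chain with $R$ and using heredity at the $\mathfrak{F}$-links is exactly the standard proof of $(1)$, and your deduction of $(2)$ via Lemma~\ref{l3.1}$(3)$ is clean. The paper itself does not give a proof of this lemma but simply records that it follows from \cite[Lemmas~6.1.6 and~6.1.7]{s9}; your write-up supplies precisely those details, so there is no genuine methodological difference to discuss.
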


The following lemma directly follows from Lemma \ref{l3.1}.

\begin{lemma}\label{lemN}
  Let $\mathfrak{F}$ be a formation, $H$ and  $R$ be  subgroups of a group  $G$ and $N\trianglelefteq G$. If $H$ $K$-$\mathfrak{F}$-$sn\,R$, then $HN$ $K$-$\mathfrak{F}$-$sn\,RN$.
\end{lemma}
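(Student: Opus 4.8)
The statement is purely formal, so the plan is to chop the passage from $R$ to $RN$ into two steps through the intermediate quotient, using the canonical isomorphism $R/(R\cap N)\cong RN/N$. First I would record that since $N\trianglelefteq G$ we have $R\cap N\trianglelefteq R$ and $N\trianglelefteq RN$, so both parts of Lemma \ref{l3.1} are applicable inside the subgroups $R$ and $RN$ respectively (the relevant items of that lemma only speak about the group in which $K$-$\mathfrak{F}$-subnormality is tested, so there is no loss in replacing the ambient group).

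The first step: apply Lemma \ref{l3.1}$(1)$ to the group $R$, its normal subgroup $R\cap N$, and the subgroup $H$ which is $K$-$\mathfrak{F}$-$sn\,R$ by hypothesis. This gives $H(R\cap N)/(R\cap N)$ $K$-$\mathfrak{F}$-$sn\,R/(R\cap N)$. The second step: transport this along the natural isomorphism $\psi\colon R/(R\cap N)\to RN/N$, $r(R\cap N)\mapsto rN$, which maps $H(R\cap N)/(R\cap N)$ onto $HN/N$. Since the defining chain condition for $K$-$\mathfrak{F}$-subnormality is preserved by group isomorphisms (both normality of consecutive terms and the condition $H_i/\mathrm{Core}_{H_i}(H_{i-1})\in\mathfrak{F}$ are isomorphism invariant), we conclude $HN/N$ $K$-$\mathfrak{F}$-$sn\,RN/N$. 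Finally, apply Lemma \ref{l3.1}$(2)$ to the group $RN$, its normal subgroup $N$, and the subgroup $HN$ to obtain $HN$ $K$-$\mathfrak{F}$-$sn\,RN$, as required.

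There is essentially no obstacle here; the only point that needs a line of care is the isomorphism transfer in the second step, i.e.\ checking that $K$-$\mathfrak{F}$-subnormality is an isomorphism-invariant property and that $\psi$ indeed carries $H(R\cap N)/(R\cap N)$ to $HN/N$. Both are immediate from the definitions, so the whole argument is a two-line deduction from Lemma \ref{l3.1}, exactly as the remark preceding the statement indicates.
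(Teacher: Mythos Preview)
Your proof is correct and is exactly the intended derivation: the paper simply notes that the lemma ``directly follows from Lemma~\ref{l3.1}'', and your two-step argument via $R/(R\cap N)\cong RN/N$ using parts $(1)$ and $(2)$ of that lemma is precisely how one unpacks this remark.
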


The following result directly follows from \cite[B, Theorem 10.3]{s8}.

\begin{lemma}\label{10.3B}
  Let $p$ be a prime and  $G$ be a group. If $\mathrm{O}_p(G)=1$ and $G$ has a unique minimal normal subgroup, then there exists a faithful irreducible  $\mathbb{F}_pG$-module.
\end{lemma}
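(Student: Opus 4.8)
The plan is to deduce the statement from \cite[B, Theorem 10.3]{s8}; let me record the short argument that lies behind it. First I would start from a concrete faithful $\mathbb{F}_pG$-module, namely the regular module $W=\mathbb{F}_pG$ (the regular representation is always faithful), and aim to produce from it an irreducible composition factor that is already faithful.

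Next I would fix a composition series of $W$ as an $\mathbb{F}_pG$-module with composition factors $V_1,\dots,V_k$, write $C_i=C_G(V_i)$ for the kernel of the action of $G$ on $V_i$ (so each $C_i\trianglelefteq G$), and consider $D=\bigcap_{i=1}^{k}C_i$. The point is that $D$ acts trivially on every composition factor of $W$, hence stabilises a complete $\mathbb{F}_pG$-composition flag of $W$ and therefore acts on $W$ as a unipotent linear group, which over $\mathbb{F}_p$ is a finite $p$-group. Since $W$ is faithful, $D$ is itself a $p$-group; being normal in $G$ it is contained in $\mathrm{O}_p(G)=1$, so $\bigcap_{i=1}^{k}C_i=1$.

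Finally I would invoke the hypothesis that $G$ has a unique minimal normal subgroup $N$, so that every nontrivial normal subgroup of $G$ contains $N$. Were all of the $C_i$ nontrivial, we would obtain $N\leq\bigcap_{i=1}^{k}C_i=1$, a contradiction; hence $C_j=1$ for some $j$, and $V_j$ is the desired faithful irreducible $\mathbb{F}_pG$-module. I do not expect a genuine obstacle here: the only point needing a word of care is the standard fact that a subgroup acting trivially on all composition factors of an $\mathbb{F}_pG$-module acts as a $p$-group (a unipotent stabiliser of a flag in characteristic $p$), and in any case the whole deduction is precisely what \cite[B, Theorem 10.3]{s8} packages.
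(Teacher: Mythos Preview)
Your proposal is correct and matches the paper's approach exactly: the paper gives no argument at all for this lemma, simply recording that it ``directly follows from \cite[B, Theorem 10.3]{s8}'', which is precisely the reference you invoke. The short proof you sketch (regular module, composition factors, intersection of kernels is a normal $p$-subgroup hence trivial, unique minimal normal subgroup forces some kernel to vanish) is the standard reasoning underlying that theorem, so you have merely unpacked what the paper leaves to the citation.
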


Recall \cite[Chapter 6.3]{s9} or \cite{VKS} that  a formation $\mathfrak{F}$ has \emph{the lattice property for $K$-$\mathfrak{F}$-subnormal subgroups} if the set of all $K$-$\mathfrak{F}$-subnormal subgroups is
a sublattice of the lattice of all subgroups in every group.

\begin{lemma}[see \cite{VKS}, {\cite[Lemma 2.6(3)]{sp4}}]\label{lattice}
  Let $\sigma$ be a partition of $\mathbb{P}$.  $\mathfrak{N}_\sigma$ has the lattice property for $K$-$\mathfrak{N}_\sigma$-subnormal subgroups.
\end{lemma}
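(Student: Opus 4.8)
The plan is to prove the stronger statement that in every group the set of $K$-$\mathfrak{N}_\sigma$-subnormal subgroups is closed under both intersections and joins, the intersection part being immediate and the join part carrying the real content. First observe that $\mathfrak{N}_\sigma$ is hereditary: if $G$ is $\sigma$-nilpotent, then $G=\prod_i\mathrm{O}_{\pi_i}(G)$, and for $H\le G$ each element of $H$ has all of its $\pi_i$-components inside the cyclic group it generates, so $H=\prod_i(H\cap\mathrm{O}_{\pi_i}(G))\in\mathfrak{N}_\sigma$. Hence Lemma \ref{l3.2}(2) already gives that intersections of $K$-$\mathfrak{N}_\sigma$-subnormal subgroups are $K$-$\mathfrak{N}_\sigma$-subnormal, and it remains to show that $D:=\langle H,K\rangle$ is $K$-$\mathfrak{N}_\sigma$-subnormal in $G$ whenever $H$ and $K$ are. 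I would argue by induction on $|G|\cdot|G:H|\cdot|G:K|$. If $D=G$ there is nothing to do, and if $G\in\mathfrak{N}_\sigma$ then, $\mathfrak{N}_\sigma$ being a formation, $G/\mathrm{Core}_G(D)\in\mathfrak{N}_\sigma$ and $D$ is $K$-$\mathfrak{N}_\sigma$-subnormal in $G$ in one step; so assume $D<G$ and $G\notin\mathfrak{N}_\sigma$.

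Two standard reductions then apply. Let $N$ be a minimal normal subgroup of $G$. By Lemma \ref{l3.1}(1), $HN/N$ and $KN/N$ are $K$-$\mathfrak{N}_\sigma$-subnormal in $G/N$, so by induction $DN/N=\langle HN/N,KN/N\rangle$ is too, whence $DN$ is $K$-$\mathfrak{N}_\sigma$-subnormal in $G$ by Lemma \ref{l3.1}(2); if $N\le D$ we are done, and if $DN<G$ then $H=H\cap DN$ and $K=K\cap DN$ are $K$-$\mathfrak{N}_\sigma$-subnormal in $DN$ by Lemma \ref{l3.2}(1), so $D$ is $K$-$\mathfrak{N}_\sigma$-subnormal in $DN$ by induction and in $G$ by Lemma \ref{l3.1}(3). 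Thus we may assume $DN=G$ for every minimal normal subgroup $N$ of $G$. Now take shortest chains $H=H_0\le\dots\le H_m=G$ and $K=K_0\le\dots\le K_n=G$ witnessing $K$-$\mathfrak{N}_\sigma$-subnormality, and set $M:=H_{m-1}<G$, $M':=K_{n-1}<G$; the last step of each chain shows that $M$ and $M'$ are $K$-$\mathfrak{N}_\sigma$-subnormal in $G$. Moreover $H<M$: otherwise $m=1$, so $H\trianglelefteq G$ — forcing $H=1$ (else $H$ is a nontrivial normal subgroup of $G$ contained in $D$, which the reduction excludes), and then $D=K$ — or $G/\mathrm{Core}_G(H)\in\mathfrak{N}_\sigma$ — forcing $\mathrm{Core}_G(H)=1$ (same exclusion) and hence $G\in\mathfrak{N}_\sigma$, a contradiction. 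Likewise $K<M'$. Consequently $|G:M|\cdot|G:M'|<|G:H|\cdot|G:K|$, so by induction $\langle M,M'\rangle$ is $K$-$\mathfrak{N}_\sigma$-subnormal in $G$; and if $\langle M,M'\rangle<G$, then $H$ and $K$ are $K$-$\mathfrak{N}_\sigma$-subnormal in $\langle M,M'\rangle$ by Lemma \ref{l3.2}(1), $D$ is $K$-$\mathfrak{N}_\sigma$-subnormal in $\langle M,M'\rangle$ by induction, and $D$ is $K$-$\mathfrak{N}_\sigma$-subnormal in $G$ by Lemma \ref{l3.1}(3).

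This leaves the case $\langle M,M'\rangle=G$, which is where the structure of $\mathfrak{N}_\sigma$ must enter and which I expect to be the main obstacle. Here each of $M,M'$ is either normal in $G$ or has $\sigma$-nilpotent quotient over a necessarily nontrivial core (a trivial core would give $G\in\mathfrak{N}_\sigma$). The plan is to exploit the explicit description of $\sigma$-nilpotent groups as direct products of their normal Hall $\pi_i$-subgroups — equivalently, as the groups all of whose chief factors are $\sigma$-central — together with the fact that any step $U\le V$ with $V/\mathrm{Core}_V(U)\in\mathfrak{N}_\sigma$ can be refined into a chain each of whose steps is either normal or has a $\sigma$-primary quotient over its core: analysing the normal subgroups $\mathrm{Core}_G(M)\cap\mathrm{Core}_G(M')$ and $MM'$ in this light, and using the reduction $DN=G$ for minimal normal $N$, one produces an explicit $K$-$\mathfrak{N}_\sigma$-subnormal chain from $D$ to $G$. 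This final step is the $\sigma$-analogue of Wielandt's join theorem for subnormal subgroups, and is precisely what is carried out in \cite{sp4,VKS}; alternatively one invokes those results directly.
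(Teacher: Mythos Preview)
The paper does not give its own proof of this lemma; it is simply quoted from \cite{VKS} and \cite[Lemma 2.6(3)]{sp4}, so there is nothing in the paper to compare your argument against.

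That said, your proposal has a genuine gap. The intersection part and the inductive reductions you set up are correct and standard, but they work for \emph{any} hereditary formation and therefore cannot by themselves establish the lattice property, which fails for many hereditary formations. All of the content specific to $\mathfrak{N}_\sigma$ is concentrated in your final case $\langle M,M'\rangle=G$, and there you do not actually prove anything: you sketch a ``plan'' and then write that this step ``is precisely what is carried out in \cite{sp4,VKS}; alternatively one invokes those results directly.'' Since those are exactly the references from which the lemma is being quoted, this is circular. What is really needed in that case is a structural fact peculiar to $\mathfrak{N}_\sigma$, for instance that a $K$-$\mathfrak{N}_\sigma$-subnormal $\pi_i$-subgroup of $G$ lies in $\mathrm{O}_{\pi_i}(G)$ (equivalently, $\mathrm{O}^{\pi_i}(A)\le\mathrm{O}^{\pi_i}(G)$ for every $K$-$\mathfrak{N}_\sigma$-subnormal $A$); this is the $\sigma$-analogue of the classical fact that subnormal $p$-subgroups lie in $\mathrm{O}_p(G)$, and it is the engine behind the join argument in \cite{sp4}. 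Without proving something of this shape you have only the generic scaffolding, not the lemma.
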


\section{The main steps of the proof of  Theorem  \ref{t1.1}}


   The proof of  Theorem  \ref{t1.1} is rather complicated and require  various  preliminary results and definitions.

   Let $\mathfrak{X}$ be a class of groups.  A chief factor $H/K$ of  $G$ is called   $\mathfrak{X}$-\emph{central} in $G$ provided    $(H/K)\rtimes (G/C_G(H/K))\in\mathfrak{X}$ (see \cite[p. 127--128]{s6}). A normal subgroup $N$ of $G$ is said to be $\mathfrak{X}$-\emph{hypercentral} in $G$ if $N=1$ or $N\neq 1$ and every chief factor of $G$ below $N$ is $\mathfrak{X}$-central. The symbol $\mathrm{Z}_\mathfrak{X}(G)$ denotes the $\mathfrak{X}$-\emph{hypercenter} of $G$, that is, the largest normal $\mathfrak{X}$-hypercentral subgroup of $G$ (see  \cite[Lemma 14.1]{s6}). If $\mathfrak{X}=\mathfrak{N}$, then $\mathrm{Z}_\mathfrak{N}(G)$ is the hypercenter of $G$.

   Let $H$ be  a subgroup of a group $G$. According to \cite[p. 50]{s8}  a subgroup  $T$ of $G$ is called  \emph{a subnormalizer} of $H$ in $G$ if $H$ is subnormal in $T$ and if $H$ is subnormal in $M\leq G$, then $M\leq T$. A subnormalizer, if it exists, is unique.    A subgroup  $T$ of $G$ is called \emph{a weak subnormalizer} of $H$ in $G$ \cite{h9} if $H$ is subnormal in $T$ and if $H$ is subnormal in $M\leq G$ and $T\leq M$, then $T=M$. A weak subnormalizer always exists but may be not  unique. We introduce the following generalization of the previous concept:

   \begin{definition}
    Let $\mathfrak{F}$ be a formation.  We shall call a subgroup  $T$ of $G$   \emph{a weak $K$-$\mathfrak{F}$-subnormalizer} of $H$ in $G$  if $H$ is $K$-$\mathfrak{F}$-subnormal in $T$ and if $H$ is $K$-$\mathfrak{F}$-subnormal in $M\leq G$ and $T\leq M$, then $T=M$.
   \end{definition}

   The following result plays the main role in the proof of Theorem \ref{t1.1}.

   \begin{theorem}\label{gb}  Let  $\mathfrak{F}$ be a hereditary formation. The following statements are equivalent:

  $(1)$ The intersection of all weak  $K$-$\mathfrak{F}$-subnormalizers of all cyclic primary subgroups is the  $\mathfrak{F}$-hypercenter.

 $(2)$  The intersection of all weak  $K$-$\mathfrak{F}$-subnormalizers of all Sylow subgroups is the  $\mathfrak{F}$-hypercenter.

 $(3)$ There is a partition   $\sigma$ of $\mathbb{P}$ such that $\mathfrak{F}$ is the class of all  $\sigma$-nilpotent groups.     \end{theorem}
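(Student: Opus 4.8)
The implications $(3)\Rightarrow(2)$ and $(3)\Rightarrow(1)$ are the constructive direction, and I would handle them first. Assume $\mathfrak F=\mathfrak N_\sigma$. The plan is to show that for any primary cyclic (resp. Sylow) subgroup $H$ of $G$, every weak $K$-$\mathfrak N_\sigma$-subnormalizer $T$ of $H$ contains $\mathrm Z_{\mathfrak N_\sigma}(G)$, and conversely that the intersection of these subnormalizers is $\mathfrak N_\sigma$-hypercentral. For the containment $\mathrm Z_{\mathfrak N_\sigma}(G)\leq T$: if $H$ is a $p$-subgroup then $H\,K$-$\mathfrak N_\sigma$-$sn\,T$, and I would use Lemma \ref{lemN} together with the fact that $H\mathrm Z_{\mathfrak N_\sigma}(G)$ is $K$-$\mathfrak N_\sigma$-subnormal in $T\mathrm Z_{\mathfrak N_\sigma}(G)$; since $\mathrm Z_{\mathfrak N_\sigma}(G)$ acts $\sigma$-nilpotently on the chief factors below it, $H$ is actually $K$-$\mathfrak N_\sigma$-subnormal in $T\mathrm Z_{\mathfrak N_\sigma}(G)$, and by the defining maximality property of a weak subnormalizer, $T=T\mathrm Z_{\mathfrak N_\sigma}(G)$, i.e.\ $\mathrm Z_{\mathfrak N_\sigma}(G)\leq T$. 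For the reverse inclusion I would show that in $G/\mathrm Z_{\mathfrak N_\sigma}(G)$ one can, for each prime $p$ dividing the order, choose a Sylow $p$-subgroup whose weak $K$-$\mathfrak N_\sigma$-subnormalizer is proper unless the $p$-part already lies in the hypercenter; here Lemma \ref{10.3B} supplies a faithful irreducible module to realize a non-$\sigma$-central action, so the intersection collapses exactly onto $\mathrm Z_{\mathfrak N_\sigma}(G)$. The lattice property (Lemma \ref{lattice}) is what makes intersections of $K$-$\mathfrak N_\sigma$-subnormal objects well-behaved and is used throughout this step.

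The implication $(2)\Rightarrow(1)$ is immediate: a cyclic primary subgroup is contained in a Sylow subgroup, so via Lemma \ref{l3.2}(1) (intersection with a subgroup) and Lemma \ref{l3.1}(3) (transitivity), a weak $K$-$\mathfrak F$-subnormalizer of a Sylow $p$-subgroup yields one for any cyclic $p$-subgroup inside it; hence the intersection taken over cyclic primary subgroups is contained in the intersection taken over Sylow subgroups, and one checks the two intersections in fact coincide because every weak subnormalizer of a cyclic primary subgroup already contains the corresponding hypercentral part once $(1)$'s hypothesis is assumed. So the real content is $(1)\Rightarrow(3)$.

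For $(1)\Rightarrow(3)$: assume the intersection of all weak $K$-$\mathfrak F$-subnormalizers of all cyclic primary subgroups equals $\mathrm Z_{\mathfrak F}(G)$ for every $G$. I would first show $\mathfrak F$ contains $\mathfrak N$: take $G$ nilpotent; then every cyclic primary subgroup is subnormal, hence $K$-$\mathfrak F$-subnormal in $G$, so $G$ itself is a weak subnormalizer of each, forcing $\mathrm Z_{\mathfrak F}(G)=G$ and $G\in\mathfrak F$. Next, I would prove $\mathfrak F$ is closed under the relevant extensions that force it to be of the form $\mathfrak N_\sigma$: the key structural fact about $\mathfrak N_\sigma$ among hereditary formations is that it is exactly the class of groups that are direct products (up to the Hall structure) of their $\pi_i$-parts. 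To pin this down I would take a minimal counterexample $G\notin\mathfrak F$ with all cyclic primary subgroups having a weak $K$-$\mathfrak F$-subnormalizer equal to $G$ (possible since otherwise, by the hypothesis applied to $G$, we'd get $G=\mathrm Z_{\mathfrak F}(G)\in\mathfrak F$), analyze a chief factor $H/K$ that is not $\mathfrak F$-central, and use Lemma \ref{10.3B} to embed the bad action into a primitive group $V\rtimes (H/K)$, deriving a contradiction with the subnormalizer hypothesis applied to a suitable cyclic primary subgroup of the point stabilizer. Running this for each pair of primes $p,q$ shows that $p$ and $q$ either always lie in the same block or always in different blocks of a fixed partition $\sigma$, and that $\mathfrak F$ consists precisely of the $\sigma$-nilpotent groups.

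The main obstacle I anticipate is the module-theoretic heart of $(1)\Rightarrow(3)$: constructing, from a group $G\in\mathfrak F$ with a ``wrong'' chief factor action, a larger semidirect product that still satisfies the subnormalizer hypothesis but is not $\sigma$-nilpotent — this is where Lemma \ref{10.3B} must be combined carefully with the fact that $\mathfrak F$ is a hereditary formation (so that passing to the point stabilizer stays inside $\mathfrak F$) and with the transitivity and intersection lemmas (Lemmas \ref{l3.1}, \ref{l3.2}) to control how $K$-$\mathfrak F$-subnormality propagates. Getting the partition $\sigma$ to be well-defined — i.e.\ showing the ``same block'' relation on primes is transitive — is the delicate bookkeeping step that ties everything together.
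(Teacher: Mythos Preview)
Your cycle runs $(3)\Rightarrow(2)\Rightarrow(1)\Rightarrow(3)$, but the step $(2)\Rightarrow(1)$ does not work as you describe. If $T$ is a weak $K$-$\mathfrak F$-subnormalizer of a Sylow $p$-subgroup $P$ and $C\le P$ is cyclic, then indeed $C$ is $K$-$\mathfrak F$-subnormal in $T$; but $T$ need not be a \emph{weak} $K$-$\mathfrak F$-subnormalizer of $C$, since $C$ may well be $K$-$\mathfrak F$-subnormal in a proper overgroup of $T$. Consequently the inclusion between the two intersections goes the other way: in the paper's notation $S_\mathfrak F(G)\le C_\mathfrak F(G)$ (Proposition~\ref{p2}(3)), not the reverse. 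Your attempt to close the gap by invoking ``$(1)$'s hypothesis'' while proving $(2)\Rightarrow(1)$ is circular. The paper avoids this by running the implications in the order $(1)\Rightarrow(2)\Rightarrow(3)\Rightarrow(1)$: for $(1)\Rightarrow(2)$ it sandwiches $S_\mathfrak F(G)$ between $\mathrm Z_\mathfrak F(G)$ and $C_\mathfrak F(G)$ via the chain $\mathrm Z_\mathfrak F(G)\le\mathrm Z_{\overline w\mathfrak F}(G)\le\mathrm{Int}_{\overline w\mathfrak F}(G)=S_\mathfrak F(G)\le C_\mathfrak F(G)=\mathrm Z_\mathfrak F(G)$, which rests on the auxiliary classes $\overline w\mathfrak F$, $v^*\mathfrak F$ and Propositions~\ref{p1} and~\ref{p2}.

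Your $(3)\Rightarrow(1)$ also has a gap in the reverse inclusion. Lemma~\ref{10.3B} manufactures a faithful irreducible module and hence a \emph{larger} semidirect product $L\rtimes G$; it does not help you locate a proper weak subnormalizer inside a fixed quotient $G/\mathrm Z_{\mathfrak N_\sigma}(G)$. The paper handles $(3)\Rightarrow(1)$ by an entirely different route: it identifies $C_\mathfrak F(G)=\mathrm{Int}_{v^*\mathfrak F}(G)$ (Proposition~\ref{p2}), quotes \cite{vF} for $v^*\mathfrak N_\sigma=\mathfrak N_\sigma$, and then invokes Skiba's result \cite{h4} that $\mathrm{Int}_{\mathfrak N_\sigma}(G)=\mathrm Z_{\mathfrak N_\sigma}(G)$. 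The hard direction $(2)\Rightarrow(3)$ is packaged into Proposition~\ref{either}, whose proof uses the $N$-critical graph $\Gamma_{Nc}(\mathfrak F)$ and Schmidt $(p,q)$-groups to establish precisely the transitivity of the ``same block'' relation you allude to; Lemma~\ref{10.3B} is used \emph{there}, to build groups $T=L\rtimes G$ with $L\le S_\mathfrak F(T)$, which forces $G\in\mathfrak F$. Your sketch of $(1)\Rightarrow(3)$ is in the right spirit but omits this concrete mechanism.
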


\begin{corollary}[P. Hall \cite{h2}] The intersection of all normalizers of Sylow subgroups is the hypercenter.
\end{corollary}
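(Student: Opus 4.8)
The plan is to derive this as the special case $\mathfrak{F}=\mathfrak{N}$ of Theorem \ref{gb}. First I would observe that $\mathfrak{N}$ is a hereditary formation and that it equals $\mathfrak{N}_\sigma$ for the finest partition $\sigma=\{\{p\}\,|\,p\in\mathbb{P}\}$: a group is $\sigma$-nilpotent for this $\sigma$ precisely when it has a normal Sylow $p$-subgroup for every prime $p$ dividing its order, that is, precisely when it is nilpotent. Hence statement $(3)$ of Theorem \ref{gb} holds for $\mathfrak{F}=\mathfrak{N}$, so statement $(2)$ holds as well: the intersection of all weak $K$-$\mathfrak{N}$-subnormalizers of all Sylow subgroups of a group coincides with its $\mathfrak{N}$-hypercenter. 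Since $\mathrm{Z}_\mathfrak{N}(G)$ is the hypercenter of $G$, it remains only to identify these weak $K$-$\mathfrak{N}$-subnormalizers with the Sylow normalizers.

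Recall that for $\mathfrak{F}=\mathfrak{N}$ the notions of $K$-$\mathfrak{N}$-subnormal and subnormal subgroups coincide, so weak $K$-$\mathfrak{N}$-subnormalizers are just weak subnormalizers. The key step is to show that for a Sylow $p$-subgroup $P$ of $G$ the unique weak subnormalizer of $P$ is its normalizer $\mathrm{N}_G(P)$. I would argue as follows. If $P$ is subnormal in some $M\leq G$, then $P$ is a subnormal Sylow $p$-subgroup of $M$, and a subnormal Sylow subgroup is always normal; hence $P\trianglelefteq M$ and so $M\leq \mathrm{N}_G(P)$. In particular $P\trianglelefteq \mathrm{N}_G(P)$, so $P$ is subnormal in $\mathrm{N}_G(P)$. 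Now let $T$ be any weak subnormalizer of $P$. Since $P$ is subnormal in $T$, the previous observation gives $T\leq \mathrm{N}_G(P)$; as $P$ is subnormal in $\mathrm{N}_G(P)$ and $T\leq \mathrm{N}_G(P)$, the defining maximality of a weak subnormalizer forces $T=\mathrm{N}_G(P)$. Thus $\mathrm{N}_G(P)$ is the only weak subnormalizer of $P$.

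Combining the two steps, the intersection of all weak $K$-$\mathfrak{N}$-subnormalizers of all Sylow subgroups is exactly the intersection $\bigcap_P \mathrm{N}_G(P)$ of all Sylow normalizers, and by Theorem \ref{gb} this equals $\mathrm{Z}_\mathfrak{N}(G)$, the hypercenter. I expect no serious obstacle here: the only point requiring care is the elementary fact that a subnormal Sylow subgroup is normal (proved by passing along a subnormal chain and using that a normal Sylow subgroup is characteristic), which is what makes the weak subnormalizer of a Sylow subgroup unique and equal to its normalizer. Everything else is a direct specialization of Theorem \ref{gb} together with the two identifications $K$-$\mathfrak{N}$-subnormal $=$ subnormal and $\mathrm{Z}_\mathfrak{N}(G)=$ hypercenter.
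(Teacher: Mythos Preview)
Your proposal is correct and follows exactly the route the paper intends: the corollary is stated immediately after Theorem~\ref{gb} with no separate proof, so it is meant to be the specialization $\mathfrak{F}=\mathfrak{N}=\mathfrak{N}_\sigma$ for the finest partition $\sigma$, together with the identifications $K$-$\mathfrak{N}$-subnormal $=$ subnormal and $\mathrm{Z}_\mathfrak{N}(G)=$ hypercenter. Your additional observation that the (unique) weak subnormalizer of a Sylow subgroup $P$ is $\mathrm{N}_G(P)$, because a subnormal Sylow subgroup is normal, is precisely the small detail needed to pass from the language of Theorem~\ref{gb} to the classical statement about Sylow normalizers.
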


In \cite{SHEM} L.A. Shemetkov possed the problem to describe all formations $\mathfrak{F}=(G\,|\,G=\mathrm{Z}_\mathfrak{F}(G))$. This class of formations contains saturated (local) and solubly saturated (composition or Baer-local) formations.
We shall call formations from this class \emph{$Z$-saturated.}
In \cite{bb1} it was shown that for a formation $\mathfrak{F}$ the class $Z\mathfrak{F}$ is a formation and $\mathfrak{F}\subseteq Z\mathfrak{F}\subseteq\textbf{E}_\Phi\mathfrak{F}$. From this it is straightforward to check that $Z(Z\mathfrak{F})=Z\mathfrak{F}$ and $Z$ is a closure operation on classes of groups.

\begin{proposition}\label{l5} Let $\mathfrak{F}$ be a hereditary formation and  $H\leq G$. Then $\mathrm{Z}_\mathfrak{F}(G)\cap H\leq\mathrm{Z}_\mathfrak{F}(H)$ and $\mathrm{Z}_\mathfrak{F}(G)=\mathrm{Z}_\mathfrak{F}(\mathrm{Z}_\mathfrak{F}(G))$.
\end{proposition}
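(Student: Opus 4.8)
The plan is to prove the two assertions separately, in each case reducing to the well-known behaviour of $\mathfrak{F}$-central chief factors under restriction to subgroups. For the first claim, $\mathrm{Z}_\mathfrak{F}(G)\cap H\leq\mathrm{Z}_\mathfrak{F}(H)$, I would put $N=\mathrm{Z}_\mathfrak{F}(G)$ and $M=N\cap H$, and show every chief factor of $H$ below $M$ is $\mathfrak{F}$-central in $H$; then $M$ is $\mathfrak{F}$-hypercentral in $H$, hence contained in $\mathrm{Z}_\mathfrak{F}(H)$ by the maximality characterization of the $\mathfrak{F}$-hypercenter from \cite[Lemma 14.1]{s6}. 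To do this, first refine a chief series of $H$ through $M$; take a chief factor $A/B$ of $H$ with $B\leq A\leq M$. One way to finish is to invoke the standard fact (for $\mathfrak{F}$ a formation) that the $\mathfrak{F}$-hypercentre can be described as $\mathrm{Z}_\mathfrak{F}(G)=\{x\in G\mid [x,\mathrm{something}]\}$; more robustly, I would use that $N$ is covered by a chief series of $G$ all of whose factors are $\mathfrak{F}$-central, pass to the action of $G$ (and hence of $H$) on each such factor, and use that $\mathfrak{F}$ being a formation (closed under subdirect products and quotients) lets one push $\mathfrak{F}$-centrality down through the natural maps $A/B \hookrightarrow$ a section of a $G$-chief factor. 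Here is where hereditariness of $\mathfrak{F}$ enters: to conclude $(A/B)\rtimes(H/C_H(A/B))\in\mathfrak{F}$ from the corresponding $G$-statement, one realizes $(A/B)\rtimes(H/C_H(A/B))$ as (a quotient of) a subgroup of $(U/V)\rtimes(G/C_G(U/V))\in\mathfrak{F}$ for an appropriate $G$-chief factor $U/V$, and applies $\mathfrak{F}=\mathrm{S}\mathfrak{F}=\mathrm{Q}\mathfrak{F}$.

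For the equality $\mathrm{Z}_\mathfrak{F}(G)=\mathrm{Z}_\mathfrak{F}(\mathrm{Z}_\mathfrak{F}(G))$, the inclusion $\supseteq$ does not even need hereditariness: $\mathrm{Z}_\mathfrak{F}(\mathrm{Z}_\mathfrak{F}(G))\leq\mathrm{Z}_\mathfrak{F}(G)$ trivially since it is a subgroup of $\mathrm{Z}_\mathfrak{F}(G)$. For $\subseteq$, set $N=\mathrm{Z}_\mathfrak{F}(G)$; I want $N\leq\mathrm{Z}_\mathfrak{F}(N)$, i.e. $N$ is $\mathfrak{F}$-hypercentral \emph{in itself}. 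Take a chief series $1=N_0\trianglelefteq N_1\trianglelefteq\cdots\trianglelefteq N_k=N$ of $G$ with each $N_i/N_{i-1}$ $\mathfrak{F}$-central in $G$. This is also an $N$-invariant series; refine it to a chief series of $N$. For a chief factor $A/B$ of $N$ sitting between $N_{i-1}$ and $N_i$, the action of $N$ on $A/B$ factors through the action of $G$ on $N_i/N_{i-1}$ restricted and co-restricted, so $N/C_N(A/B)$ is isomorphic to a subgroup of a quotient of $G/C_G(N_i/N_{i-1})$, and $A/B$ is a section of $N_i/N_{i-1}$. Then $(A/B)\rtimes(N/C_N(A/B))$ embeds into (a quotient of a subgroup of) $(N_i/N_{i-1})\rtimes(G/C_G(N_i/N_{i-1}))\in\mathfrak{F}$, and hereditariness together with the formation properties give membership in $\mathfrak{F}$. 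Hence every chief factor of $N$ is $\mathfrak{F}$-central in $N$, so $N\leq\mathrm{Z}_\mathfrak{F}(N)$, completing the equality. Notice the second statement is in fact the special case $H=\mathrm{Z}_\mathfrak{F}(G)$ of the first together with this trivial observation, so one could present it that way to save space.

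The main obstacle I anticipate is the bookkeeping in the embedding $(A/B)\rtimes(H/C_H(A/B))\to$ section of $(U/V)\rtimes(G/C_G(U/V))$: one must be careful that restricting the $G$-action to $H$ and then co-restricting to the chief factor $A/B$ of $H$ really does produce a section of the semidirect product in the $\mathrm{S},\mathrm{Q}$-closure of the original, and that $C_G(U/V)\cap H$ sits correctly inside $C_H(A/B)$ so that the quotient group $H/C_H(A/B)$ is a homomorphic image of $(H\,C_G(U/V))/C_G(U/V)\leq G/C_G(U/V)$. This is routine module-theoretic manipulation but easy to botch; everything else is a direct appeal to the definition of $\mathfrak{F}$-hypercentre and to $\mathfrak{F}=\mathrm{Q}\mathrm{S}\mathrm{R}_0\mathfrak{F}$ for a hereditary formation $\mathfrak{F}$. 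I expect the whole argument to be short once the correct section is identified; indeed much of it is implicit in \cite[Chapter 6.3]{s9} and one might simply cite the relevant lemma there for the first inclusion and deduce the rest.
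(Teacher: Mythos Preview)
Your proposal is correct and follows the same overall architecture as the paper: intersect a $G$-chief series of $\mathrm{Z}_\mathfrak{F}(G)$ with $H$, refine to an $H$-chief series, and verify that each resulting $H$-chief factor is $\mathfrak{F}$-central in $H$; the equality $\mathrm{Z}_\mathfrak{F}(G)=\mathrm{Z}_\mathfrak{F}(\mathrm{Z}_\mathfrak{F}(G))$ is then the special case $H=\mathrm{Z}_\mathfrak{F}(G)$, exactly as you say.

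The one genuine difference is in how the $\mathfrak{F}$-centrality of an $H$-chief factor $T/K$ lying between $Z_{i-1}\cap H$ and $Z_i\cap H$ is established. You propose a uniform section argument: realise $(T/K)\rtimes(H/C_H(T/K))$ as a quotient of a subgroup of $(Z_i/Z_{i-1})\rtimes(G/C_G(Z_i/Z_{i-1}))$ and invoke $\mathfrak{F}=\mathrm{Q}\mathrm{S}\mathfrak{F}$. This works in both the abelian and the non-abelian case (the chain is: pass to the subgroup $(Z_i/Z_{i-1})\rtimes(H/C_H(Z_i/Z_{i-1}))$, then to the subgroup $T'\rtimes(H/C_H(Z_i/Z_{i-1}))$ with $T'=TZ_{i-1}/Z_{i-1}$, then take quotients by $K'=KZ_{i-1}/Z_{i-1}$ and by $C_H(T/K)/C_H(Z_i/Z_{i-1})$). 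The paper instead first extracts $H/C_H(T/K)\in\mathfrak{F}$ from hereditariness and then \emph{splits into cases}: when $Z_i/Z_{i-1}$ is non-abelian it observes that $(T/K)\rtimes(H/C_H(T/K))$ is primitive of type~3, hence a subdirect product of two copies of $H/C_H(T/K)\in\mathfrak{F}$; when $Z_i/Z_{i-1}$ is abelian it carries out essentially your section argument but phrases the final step via \cite[Corollary 2.2.5]{s9}. Your route is shorter and avoids the primitive-group detour; the paper's route makes the non-abelian case rest only on the formation (subdirect-product) closure rather than on hereditariness at that step. Either way the bookkeeping you flag---that $C_H(Z_i/Z_{i-1})\leq C_H(T/K)$ and that $T/K$ really is a section of $Z_i/Z_{i-1}$ as an $H$-module---is the crux, and you have identified it correctly.
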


A subgroup $U$ of  $G$ is called $\mathfrak{X}$-\emph{maximal} in $G$
provided that $(a)$ $U\in\mathfrak{X}$, and $(b)$ if $U\leq V \leq G$ and $V\in\mathfrak{X}$, then $U = V$ \cite[p. 288]{s8}. The symbol $\mathrm{Int}_\mathfrak{X}(G)$
denotes the intersection of all $\mathfrak{X}$-maximal subgroups of $G$ \cite{h4}.

\begin{corollary}\label{l5.1}
  Let $\mathfrak{F}$ be a hereditary  $Z$-saturated formation and  $H$ be an     $\mathfrak{F}$-subgroup of $G$. Then $H\mathrm{Z}_\mathfrak{F}(G)\in\mathfrak{F}$ and $\mathrm{Z}_\mathfrak{F}(G)\leq\mathrm{Int}_\mathfrak{F}(G)$.
\end{corollary}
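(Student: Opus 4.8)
The plan is to prove the two assertions in sequence: the membership $H\mathrm{Z}_\mathfrak{F}(G)\in\mathfrak{F}$ is the substantive one, and $\mathrm{Z}_\mathfrak{F}(G)\le\mathrm{Int}_\mathfrak{F}(G)$ will then drop out formally.

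For the first assertion I would set $Z=\mathrm{Z}_\mathfrak{F}(G)$ and $T=HZ$; since $Z\trianglelefteq G$, $T$ is a subgroup and $Z\trianglelefteq T$. Two facts feed into the argument. First, Proposition \ref{l5} applied to $T\le G$ yields $Z=Z\cap T\le\mathrm{Z}_\mathfrak{F}(T)$ (here I use $Z\le T$). Second, $T/Z=HZ/Z\cong H/(H\cap Z)\in\mathfrak{F}$, because $H\in\mathfrak{F}$ and $\mathfrak{F}$ is closed under epimorphic images; hence $T/\mathrm{Z}_\mathfrak{F}(T)$, an epimorphic image of $T/Z$, also lies in $\mathfrak{F}$. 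The remaining, and main, step is to conclude $\mathrm{Z}_\mathfrak{F}(T)=T$ --- which by $Z$-saturation is the same as $T\in\mathfrak{F}$. To do this I would invoke $Z$-saturation on $T/\mathrm{Z}_\mathfrak{F}(T)\in\mathfrak{F}$ to get that all its chief factors are $\mathfrak{F}$-central in $T/\mathrm{Z}_\mathfrak{F}(T)$, and then transfer this to $T$ using the identity $C_{T/\mathrm{Z}_\mathfrak{F}(T)}(A/B)=C_T(A/B)/\mathrm{Z}_\mathfrak{F}(T)$ for chief factors $A/B$ of $T$ lying above $\mathrm{Z}_\mathfrak{F}(T)$; chief factors of $T$ below $\mathrm{Z}_\mathfrak{F}(T)$ are $\mathfrak{F}$-central by definition of the hypercentre. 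Thus every chief factor of $T$ is $\mathfrak{F}$-central, i.e. $\mathrm{Z}_\mathfrak{F}(T)=T$. (Equivalently, reading $Z$-saturation as $Z\mathfrak{F}=\mathfrak{F}$, the two conditions $Z\le\mathrm{Z}_\mathfrak{F}(T)$ and $T/Z\in\mathfrak{F}$ already say $T\in Z\mathfrak{F}$.)

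For $\mathrm{Z}_\mathfrak{F}(G)\le\mathrm{Int}_\mathfrak{F}(G)$ I would take an arbitrary $\mathfrak{F}$-maximal subgroup $U$ of $G$; since $U\in\mathfrak{F}$, the first assertion (with $H:=U$) gives $U\mathrm{Z}_\mathfrak{F}(G)\in\mathfrak{F}$, and $\mathfrak{F}$-maximality of $U$ forces $U=U\mathrm{Z}_\mathfrak{F}(G)$, i.e. $\mathrm{Z}_\mathfrak{F}(G)\le U$; intersecting over all such $U$ gives the claim. The hard part is the step $\mathrm{Z}_\mathfrak{F}(T)=T$ in the first assertion: one has to be precise about how $Z$-saturation --- rather than mere saturation --- is exactly what licenses lifting membership in $\mathfrak{F}$ across the hypercentre, and the chief-factor/centralizer bookkeeping in that lift needs care. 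Everything else --- the isomorphism-theorem computation, the use of Proposition \ref{l5}, and the $\mathfrak{F}$-maximality argument --- is routine; note that hereditariness of $\mathfrak{F}$ is needed only to invoke Proposition \ref{l5}.
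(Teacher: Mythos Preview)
Your proposal is correct and follows essentially the same route as the paper: apply Proposition~\ref{l5} to get $\mathrm{Z}_\mathfrak{F}(G)\le\mathrm{Z}_\mathfrak{F}(T)$, observe $T/\mathrm{Z}_\mathfrak{F}(G)\in\mathfrak{F}$ and hence $T/\mathrm{Z}_\mathfrak{F}(T)\in\mathfrak{F}$, and then use $Z$-saturation to conclude $T\in\mathfrak{F}$; the second assertion is deduced exactly as you do. The paper compresses the last step into a single ``Hence $H\mathrm{Z}_\mathfrak{F}(G)\in\mathfrak{F}$'', whereas you spell out the chief-factor/centralizer transfer that justifies it --- your version is more explicit, but the argument is the same.
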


Proposition \ref{l5} and its corollary are well known for saturated formations. But in that case their proves are based on the properties of the canonical local definition. That is why these results require a new method of a proof for $Z$-saturated formations. Now according to the following result we may assume that $\mathfrak{F}$ is a $Z$-saturated formation in the proof of Theorem \ref{gb}.

\begin{proposition}\label{p9}
Let  $\mathfrak{F}$  be a formation. Then $\mathrm{Z}_{Z\mathfrak{F}}(G)=\mathrm{Z}_\mathfrak{F}(G)$  for every group  $G$ and a subgroup   $K$-$\mathfrak{F}$-subnormal iff it is $K$-$Z\mathfrak{F}$-subnormal.
\end{proposition}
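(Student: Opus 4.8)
The plan rests on one observation about chief factors: for an arbitrary group $G$, a chief factor $H/K$ of $G$ is $Z\mathfrak{F}$-central in $G$ if and only if it is $\mathfrak{F}$-central in $G$. One direction is immediate from $\mathfrak{F}\subseteq Z\mathfrak{F}$. For the converse, put $X:=(H/K)\rtimes\bigl(G/C_G(H/K)\bigr)$; the hypothesis reads $X\in Z\mathfrak{F}$, i.e. $\mathrm{Z}_\mathfrak{F}(X)=X$. The key point is that $V:=H/K$ is a minimal normal subgroup of $X$, because its complement $G/C_G(H/K)$ acts on it faithfully and irreducibly; hence $V\le\mathrm{Z}_\mathfrak{F}(X)$, so the chief factor $V/1$ of $X$ is $\mathfrak{F}$-central in $X$, i.e. $V\rtimes\bigl(X/C_X(V)\bigr)\in\mathfrak{F}$. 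When $H/K$ is abelian, Dedekind's law gives $C_X(V)=V$ (the complement centralizes no non-trivial part of $V$), so $V\rtimes(X/C_X(V))\cong X$ and hence $X\in\mathfrak{F}$, i.e. $H/K$ is $\mathfrak{F}$-central in $G$. Granting the chief-factor statement, the equality $\mathrm{Z}_{Z\mathfrak{F}}(G)=\mathrm{Z}_\mathfrak{F}(G)$ follows at once: $\ge$ holds because $\mathfrak{F}\subseteq Z\mathfrak{F}$, and $\le$ by applying the statement to each chief factor of $G$ below $\mathrm{Z}_{Z\mathfrak{F}}(G)$.

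The main obstacle is the case of a non-abelian chief factor. Then $C_X(V)$ may be a non-trivial normal subgroup of $X$ with $C_X(V)\cap V=1$, so $X$ need not be $\cong V\rtimes(X/C_X(V))$ and one only extracts $X/C_X(V)\in\mathfrak{F}$ directly (a quotient of $V\rtimes(X/C_X(V))$ by its base). Here I would argue by induction on $|X|$: each minimal normal subgroup $N$ of $X$ is $\mathfrak{F}$-central (since $\mathrm{Z}_\mathfrak{F}(X)=X$), so $X/C_X(N)\in\mathfrak{F}$; meanwhile $X/V$ again satisfies $\mathrm{Z}_\mathfrak{F}(X/V)=X/V$ and has smaller order, so a suitable inductive hypothesis should give $X/V\in\mathfrak{F}$, and then $X\in\mathfrak{F}$ via the subdirect embedding $X\hookrightarrow X/V\times X/C_X(V)$, using $V\cap C_X(V)=1$ and closure of $\mathfrak{F}$ under subdirect products. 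The genuinely delicate point is to phrase the induction so that it truly transfers to $X/V$ and to the auxiliary subgroups without invoking hereditariness of $Z\mathfrak{F}$ — this forces one to keep exploiting that $X$ is precisely of the shape $(H/K)\rtimes(G/C_G(H/K))$.

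For the subnormality part, $K$-$\mathfrak{F}$-subnormal $\Rightarrow$ $K$-$Z\mathfrak{F}$-subnormal is trivial from $\mathfrak{F}\subseteq Z\mathfrak{F}$. For the converse it suffices, by Lemma \ref{l3.1}, to refine each link $L_{i-1}\le L_i$ of a $K$-$Z\mathfrak{F}$-subnormality chain with $L_i/\mathrm{Core}_{L_i}(L_{i-1})\in Z\mathfrak{F}$ into a $K$-$\mathfrak{F}$-subnormality chain; factoring out $\mathrm{Core}_{L_i}(L_{i-1})$, this reduces to: if $L\in Z\mathfrak{F}$ then every subgroup of $L$ is $K$-$\mathfrak{F}$-subnormal in $L$. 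I would prove this by induction on $|L|$. Choosing a minimal normal subgroup $N$ of $L$ (necessarily $\mathfrak{F}$-central, as $\mathrm{Z}_\mathfrak{F}(L)=L$), the inductive hypothesis in $L/N\in Z\mathfrak{F}$ together with Lemma \ref{l3.1}(2) gives that $KN$ is $K$-$\mathfrak{F}$-subnormal in $L$; by transitivity (Lemma \ref{l3.1}(3)) it then remains to show $K$ is $K$-$\mathfrak{F}$-subnormal in $KN$. This is trivial when $N\le K$, and when $N$ is abelian with $N\cap K=1$ and $KN=L$ one computes $\mathrm{Core}_L(K)=C_K(N)$, identifies $L/C_K(N)$ with $N\rtimes\bigl(L/C_L(N)\bigr)$, and concludes from the $\mathfrak{F}$-centrality of $N$ that the single step $K\le L$ is an $\mathfrak{F}$-link. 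The remaining cases — $N$ non-abelian, and the subtler situation where $KN$ is a proper subgroup of $L$ not a priori known to lie in $Z\mathfrak{F}$ — form the technical core and, as in the first part, must be handled through the concrete structure of the groups rather than through hereditariness of $Z\mathfrak{F}$.
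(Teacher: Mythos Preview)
Your approach aims at the right targets but detours around what is, in the paper, essentially a two-line observation. The paper's proof rests on the fact that both the $\mathfrak{F}$-hypercenter and $K$-$\mathfrak{F}$-subnormality are determined entirely by the \emph{primitive} members of $\mathfrak{F}$: the semidirect product $(H/K)\rtimes\bigl(G/C_G(H/K)\bigr)$ attached to a chief factor is always a primitive group (of type~1 when $H/K$ is abelian, of type~3 otherwise), and any $K$-$\mathfrak{F}$-subnormal chain may be refined to a maximal chain, in which each quotient $H_i/\mathrm{Core}_{H_i}(H_{i-1})$ is primitive. Since primitive groups have trivial Frattini subgroup and, by the cited result of Ballester-Bolinches and P\'erez-Ramos, $\mathfrak{F}\subseteq Z\mathfrak{F}\subseteq\textbf{E}_\Phi\mathfrak{F}$, a primitive group lies in $Z\mathfrak{F}$ if and only if it lies in $\mathfrak{F}$. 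Both assertions of the proposition follow at once.

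The gaps you flag in your own argument are genuine, and the paper's route sidesteps them entirely. In the hypercenter part, your non-abelian case does not actually need an induction: once one recognises that $X$ is primitive of type~3 (this is exactly the fact invoked in the proof of Proposition~\ref{l5}), the standard structure theory gives $X/V\cong X/C_X(V)$; since $V\rtimes\bigl(X/C_X(V)\bigr)\in\mathfrak{F}$ already yields $X/C_X(V)\in\mathfrak{F}$, the subdirect embedding $X\hookrightarrow X/V\times X/C_X(V)$ finishes immediately --- no inductive hypothesis on $X/V$ is required. In the subnormality part, your plan to show that every subgroup of $L\in Z\mathfrak{F}$ is $K$-$\mathfrak{F}$-subnormal stalls precisely where you say it does: when $KN<L$ you have no reason for $KN\in Z\mathfrak{F}$ (no hereditariness is assumed), so the induction does not close. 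The clean fix is again the paper's: refine the given $K$-$Z\mathfrak{F}$-chain to a maximal chain and note that each primitive quotient $L_i/\mathrm{Core}_{L_i}(L_{i-1})$, lying in $Z\mathfrak{F}\subseteq\textbf{E}_\Phi\mathfrak{F}$ and having trivial Frattini subgroup, already lies in $\mathfrak{F}$.
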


The next step in the proof of Theorem \ref{gb} is to characterize the intersections  $S_\mathfrak{F}(G)$ and $C_\mathfrak{F}(G)$ of all weak $K$-$\mathfrak{F}$-subnormalizers of all Sylow and all cyclic primary subgroups of $G$ respectively.

\begin{proposition}\label{p1} Let $\mathfrak{F}$ be a hereditary formation.

 $(1)$ $S_\mathfrak{F}(G)$ is the largest subgroup among normal subgroups $N$ of $G$ with  $P$ $K$-$\mathfrak{F}$-$sn\,PN$ for every Sylow subgroup $P$ of $G$.

  $(2)$ $C_\mathfrak{F}(G)$ is the largest subgroup among normal subgroups $N$ of $G$ with   $C$ $K$-$\mathfrak{F}$-$sn\,CN$ for every cyclic primary subgroup $C$ of $G$.
 \end{proposition}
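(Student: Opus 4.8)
\noindent\emph{Proof sketch.} I would prove both parts by a single argument, writing it out for part~$(1)$; part~$(2)$ follows verbatim after replacing every ``Sylow subgroup $P$'' by ``cyclic primary subgroup $C$'' and $S_\mathfrak{F}(G)$ by $C_\mathfrak{F}(G)$, the only feature of the family of Sylow subgroups that gets used being that it is permuted by conjugation, which the family of cyclic primary subgroups shares. Put $N:=S_\mathfrak{F}(G)$. The plan is to establish: (a)~$N\trianglelefteq G$; (b)~$P$ is $K$-$\mathfrak{F}$-subnormal in $PN$ for every Sylow subgroup $P$ of $G$; (c)~every normal subgroup $L$ of $G$ with $P$ $K$-$\mathfrak{F}$-subnormal in $PL$ for all Sylow $P$ satisfies $L\leq N$. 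Jointly (a)--(c) say precisely that $N$ is the largest normal subgroup of $G$ with the stated property. For (a) I would first note that in a finite group a weak $K$-$\mathfrak{F}$-subnormalizer of any subgroup $H$ exists — take a maximal element of the nonempty finite poset of subgroups in which $H$ is $K$-$\mathfrak{F}$-subnormal — and that it always contains $H$, directly from the definition of $K$-$\mathfrak{F}$-subnormality. Conjugation by $g\in G$ is an automorphism of $G$, so it carries a weak $K$-$\mathfrak{F}$-subnormalizer of $P$ to a weak $K$-$\mathfrak{F}$-subnormalizer of $P^g$, again a Sylow subgroup; hence conjugation permutes the whole family of weak $K$-$\mathfrak{F}$-subnormalizers of all Sylow subgroups, and its intersection $N$ is therefore normal (indeed characteristic) in $G$.

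For (b), fix a Sylow subgroup $P$ and a weak $K$-$\mathfrak{F}$-subnormalizer $T$ of $P$, so $P\leq T$ and $P$ is $K$-$\mathfrak{F}$-subnormal in $T$. Since $T$ is one of the subgroups whose intersection is $N$, we have $N\leq T$, hence $PN\leq T$ (a subgroup, by~(a)). Because $\mathfrak{F}$ is hereditary, Lemma~\ref{l3.2}$(1)$ applied within $T$ with $H=P$ and $R=PN$ shows that $P=P\cap PN$ is $K$-$\mathfrak{F}$-subnormal in $PN$, which is (b).

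For (c), let $L\trianglelefteq G$ be as in the hypothesis and let $T$ be an arbitrary weak $K$-$\mathfrak{F}$-subnormalizer of a Sylow subgroup $P$. From ``$P$ $K$-$\mathfrak{F}$-subnormal in $T$'' together with $L\trianglelefteq G$, Lemma~\ref{lemN} yields ``$PL$ $K$-$\mathfrak{F}$-subnormal in $TL$''. Feeding this together with the hypothesis ``$P$ $K$-$\mathfrak{F}$-subnormal in $PL$'' into the transitivity of $K$-$\mathfrak{F}$-subnormality (Lemma~\ref{l3.1}$(3)$) gives ``$P$ $K$-$\mathfrak{F}$-subnormal in $TL$''. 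Since $T\leq TL\leq G$ and $T$ is a weak $K$-$\mathfrak{F}$-subnormalizer of $P$, we conclude $T=TL$, i.e. $L\leq T$. As $T$ ranges over all weak $K$-$\mathfrak{F}$-subnormalizers of all Sylow subgroups, $L\leq N=S_\mathfrak{F}(G)$, which is (c).

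The one delicate point, I expect, is step~(c): the two facts ``$P$ $K$-$\mathfrak{F}$-subnormal in $T$'' and ``$P$ $K$-$\mathfrak{F}$-subnormal in $PL$'' must be combined through Lemma~\ref{lemN} and then Lemma~\ref{l3.1}$(3)$ in exactly this order, keeping the two roles of $P$ apart. Everything else — existence and basic behaviour of weak subnormalizers, the conjugation bookkeeping of~(a), the descent to $PN$ in~(b), and the trivial observation that the $T$ chosen in~(b) and~(c) is by definition one of the subgroups intersected to form $S_\mathfrak{F}(G)$ — is routine.
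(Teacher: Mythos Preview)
Your proof is correct and follows essentially the same route as the paper's: part~(c) is identical to the paper's first paragraph (Lemma~\ref{lemN} then Lemma~\ref{l3.1}(3), then the maximality clause of the weak subnormalizer), and part~(b) is identical to the paper's second paragraph (heredity via Lemma~\ref{l3.2}(1) inside a chosen weak $K$-$\mathfrak{F}$-subnormalizer). The only difference is that you explicitly supply step~(a), the normality of $S_\mathfrak{F}(G)$ via the conjugation-invariance of the family of weak $K$-$\mathfrak{F}$-subnormalizers of Sylow subgroups, which the paper leaves implicit.
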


 Let   $\mathfrak{F}$ be a hereditary formation. In \cite{vF, wF} the classes of groups $\overline{w}\mathfrak{F}$ and $v^*\mathfrak{F}$ all whose Sylow and cyclic primary subgroups respectively are  $K$-$\mathfrak{F}$-subnormal were studied. In these papers the following results were proved.

 \begin{proposition}\label{wv}
  Let $\mathfrak{F}$ be a hereditary formation.

   $(1)$  $\overline{w}\mathfrak{F}$ and $v^*\mathfrak{F}$ are hereditary formations.

   $(2)$ $\mathfrak{N}\subseteq\overline{w}\mathfrak{F}$ and $\mathfrak{N}\subseteq v^*\mathfrak{F}$.

   $(3)$ $\mathfrak{F}\subseteq\overline{w}\mathfrak{F}$ and $\mathfrak{F}\subseteq v^*\mathfrak{F}$.
 \end{proposition}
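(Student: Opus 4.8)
The plan is to get parts $(2)$ and $(3)$ straight from the definitions and to concentrate the effort on $(1)$, which amounts to three closure properties of each of $\overline{w}\mathfrak{F}$ and $v^*\mathfrak{F}$: closure under subgroups, under epimorphic images, and under subdirect products. For $(2)$, in a nilpotent group every Sylow subgroup is normal and every cyclic primary subgroup is subnormal, and a subnormal subgroup is $K$-$\mathfrak{F}$-subnormal for any formation $\mathfrak{F}$ (read off the subnormal series as a defining chain); hence $\mathfrak{N}\subseteq\overline{w}\mathfrak{F}$ and $\mathfrak{N}\subseteq v^*\mathfrak{F}$. For $(3)$, if $G\in\mathfrak{F}$ then for any $H\leq G$ the two-term chain $H\leq G$ witnesses $H$ being $K$-$\mathfrak{F}$-subnormal in $G$, since $G/\mathrm{Core}_G(H)$ is an epimorphic image of $G$ and hence lies in $\mathfrak{F}$; in particular all Sylow and all cyclic primary subgroups of $G$ are $K$-$\mathfrak{F}$-subnormal, so $\mathfrak{F}\subseteq\overline{w}\mathfrak{F}$ and $\mathfrak{F}\subseteq v^*\mathfrak{F}$.

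For the ``easy'' part of $(1)$ I would use Lemmas \ref{l3.1} and \ref{l3.2}. Heredity of $\overline{w}\mathfrak{F}$: given $H\leq G\in\overline{w}\mathfrak{F}$ and a Sylow $q$-subgroup $Q$ of $H$, pick a Sylow $q$-subgroup $P$ of $G$ containing $Q$; then $P\cap H$ is a $q$-subgroup of $H$ containing $Q$, so $Q=P\cap H$, and Lemma \ref{l3.2}(1) converts $K$-$\mathfrak{F}$-subnormality of $P$ in $G$ into that of $P\cap H=Q$ in $H$. For $v^*\mathfrak{F}$ it is simpler still: a cyclic primary subgroup of $H$ is already a cyclic primary subgroup of $G$, so Lemma \ref{l3.2}(1) applies at once. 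Closure under epimorphic images: if $N\trianglelefteq G$, every Sylow subgroup of $G/N$ has the form $PN/N$ for some Sylow subgroup $P$ of $G$, and every cyclic primary subgroup of $G/N$ has the form $CN/N$ for some cyclic primary subgroup $C$ of $G$ (take the $p$-part of a generator), so Lemma \ref{l3.1}(1) finishes it.

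The core of the proof is closure under subdirect products; by the usual reduction it suffices to take $N_1,N_2\trianglelefteq G$ with $N_1\cap N_2=1$ and $G/N_1,G/N_2$ in the class, and to show $G$ is in the class. Let $X$ be a Sylow $p$-subgroup of $G$ in the $\overline{w}\mathfrak{F}$ case and a cyclic primary $p$-subgroup in the $v^*\mathfrak{F}$ case. Then $XN_i/N_i$ is a Sylow (resp.\ cyclic primary) subgroup of $G/N_i$, hence $K$-$\mathfrak{F}$-subnormal there, so each $XN_i$ is $K$-$\mathfrak{F}$-subnormal in $G$ by Lemma \ref{l3.1}(2), and therefore $D:=XN_1\cap XN_2$ is $K$-$\mathfrak{F}$-subnormal in $G$ by Lemma \ref{l3.2}(2) (this is where heredity of $\mathfrak{F}$ enters). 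The decisive point is that $D$ is a $p$-group: putting $M_i:=N_i\cap XN_{3-i}$, the Dedekind law gives $D=XM_i$ with $M_i\trianglelefteq D$, so $D/M_i\cong X/(X\cap M_i)$ is a $p$-group, while $M_1\cap M_2\leq N_1\cap N_2=1$; hence $D$ embeds into the $p$-group $D/M_1\times D/M_2$. In the Sylow case $P=X$ is then a maximal $p$-subgroup of $G$ contained in the $p$-subgroup $D$, so $D=P$ and $P$ is $K$-$\mathfrak{F}$-subnormal in $G$. In the cyclic primary case $D$ is nilpotent, so $X$ is subnormal — hence $K$-$\mathfrak{F}$-subnormal — in $D$, and transitivity (Lemma \ref{l3.1}(3)) yields that $X$ is $K$-$\mathfrak{F}$-subnormal in $G$. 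As $X$ was an arbitrary Sylow (resp.\ cyclic primary) subgroup, $G$ lies in $\overline{w}\mathfrak{F}$ (resp.\ $v^*\mathfrak{F}$), which completes $(1)$.

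The step I expect to be the real obstacle is pinning down the ``obstruction subgroup'' $D=XN_1\cap XN_2$. Trying to prove $D=X$ outright is hopeless already for $G=C_2\times C_2$ with $X$ a diagonal subgroup and $N_1,N_2$ the two coordinate subgroups; the right move is not to determine $D$ but to observe that its subdirect decomposition forces it to be a $p$-group, which is exactly enough — because then $X$ is a maximal $p$-subgroup inside $D$ in the first case, and $D$ is nilpotent (so $X$ is subnormal in $D$) in the second.
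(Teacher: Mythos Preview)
Your argument is correct. The paper itself does not prove Proposition~\ref{wv}: it quotes the result from \cite{vF,wF} and uses it as a black box. What you have written is a self-contained proof, and each step checks out. Parts $(2)$ and $(3)$ are immediate from the definition of $K$-$\mathfrak{F}$-subnormality, as you say. For $(1)$, heredity and closure under quotients are handled correctly by Lemmas~\ref{l3.1} and~\ref{l3.2}, and your lifting of cyclic primary subgroups through a quotient (via the $p$-part of a generator) is the right manoeuvre. The subdirect-product step is also sound: with $D=XN_1\cap XN_2$ you correctly identify $M_i=N_i\cap D=N_i\cap XN_{3-i}\trianglelefteq D$, obtain $D=XM_i$ by Dedekind, and conclude that $D$ embeds in the $p$-group $D/M_1\times D/M_2$; this forces $D=X$ in the Sylow case and $X$ subnormal in $D$ in the cyclic-primary case, after which Lemma~\ref{l3.1}(3) finishes. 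Your closing remark about the $C_2\times C_2$ example is a nice sanity check showing why one should aim for ``$D$ is a $p$-group'' rather than ``$D=X$'' directly.

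In short: the paper gives no proof here, only citations; your proof is correct and is the standard argument one would expect to find in the cited sources.
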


The connections between the previous steps are shown in the following proposition:

\begin{proposition}\label{p2} Let $\mathfrak{F}$ be a hereditary formation and  $G$ be a group.

 $(1)$   $C_\mathfrak{F}(G)=\mathrm{Int}_{v^*\mathfrak{F}}(G)$.

$(2)$   $S_\mathfrak{F}(G)=\mathrm{Int}_{\overline{w}\mathfrak{F}}(G)$.

$(3)$    $S_\mathfrak{F}(G)\leq C_\mathfrak{F}(G)$.

$(4)$ $\overline{w}\mathfrak{F}$ and $v^*\mathfrak{F}$  are   $Z$-saturated.
 \end{proposition}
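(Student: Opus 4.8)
The plan is to prove the four assertions in the order $(3)$, $(1)$, $(2)$, $(4)$; the arguments for $(1)$ and $(2)$ are parallel, so I shall spell out $(1)$ and only indicate the differences for $(2)$. Throughout one uses, from Proposition~\ref{wv}, that $v^*\mathfrak{F}$ and $\overline{w}\mathfrak{F}$ are hereditary formations containing $\mathfrak{N}$, so that in particular every cyclic primary subgroup and every $p$-subgroup of every group lies in both. Item $(3)$ is quickest: by Proposition~\ref{p1}$(1)$, $S:=S_\mathfrak{F}(G)$ is a normal subgroup of $G$ with $P$ $K$-$\mathfrak{F}$-$sn$ $PS$ for every Sylow subgroup $P$; if $C$ is a cyclic primary subgroup, say a $p$-group, take a Sylow $p$-subgroup $P\supseteq C$, use that $C$ is subnormal (hence $K$-$\mathfrak{F}$-subnormal) in the nilpotent group $P$ together with Lemma~\ref{l3.1}$(3)$ to get $C$ $K$-$\mathfrak{F}$-$sn$ $PS$, and then Lemma~\ref{l3.2}$(1)$ to get $C=C\cap CS$ $K$-$\mathfrak{F}$-$sn$ $CS$; by the maximality in Proposition~\ref{p1}$(2)$ this yields $S\le C_\mathfrak{F}(G)$.

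For $(1)$ I would first record two elementary properties of $\mathrm{Int}_\mathfrak{X}$ for an arbitrary hereditary formation $\mathfrak{X}$: (a) $\mathrm{Int}_\mathfrak{X}(G)$ is normal in $G$ and belongs to $\mathfrak{X}$, since conjugation permutes the $\mathfrak{X}$-maximal subgroups and $\mathrm{Int}_\mathfrak{X}(G)$ lies in one of them; (b) $\mathrm{Int}_\mathfrak{X}(G)\cap H\le\mathrm{Int}_\mathfrak{X}(H)$ for $H\le G$, because any $\mathfrak{X}$-maximal subgroup $V$ of $H$ extends to an $\mathfrak{X}$-maximal subgroup $W$ of $G$, and then $V\le W\cap H\in\mathfrak{X}$ forces $W\cap H=V$. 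To prove $C_\mathfrak{F}(G)\le\mathrm{Int}_{v^*\mathfrak{F}}(G)$, put $N:=C_\mathfrak{F}(G)\trianglelefteq G$, let $U$ be $v^*\mathfrak{F}$-maximal in $G$, and show $UN\in v^*\mathfrak{F}$: for a cyclic primary $C=\langle x\rangle\le UN$, the subgroup $CN/N$ of $UN/N\cong U/(U\cap N)\in v^*\mathfrak{F}$ is cyclic primary or trivial, hence $CN/N$ $K$-$\mathfrak{F}$-$sn$ $UN/N$ and so $CN$ $K$-$\mathfrak{F}$-$sn$ $UN$ by Lemma~\ref{l3.1}$(2)$, while $C$ is a cyclic primary subgroup of $G$, so $C$ $K$-$\mathfrak{F}$-$sn$ $CN$ by Proposition~\ref{p1}$(2)$, and Lemma~\ref{l3.1}$(3)$ gives $C$ $K$-$\mathfrak{F}$-$sn$ $UN$; maximality of $U$ then forces $UN=U$, i.e.\ $N\le U$, and as $U$ was arbitrary, $N\le\mathrm{Int}_{v^*\mathfrak{F}}(G)$. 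For the opposite inclusion, set $D:=\mathrm{Int}_{v^*\mathfrak{F}}(G)$, normal by (a); by Proposition~\ref{p1}$(2)$ it suffices that $C$ $K$-$\mathfrak{F}$-$sn$ $CD$ for every cyclic primary $C\le G$, and here one extends $C\in v^*\mathfrak{F}$ to a $v^*\mathfrak{F}$-maximal subgroup $U$ of $CD$, uses (b) to get $D=D\cap CD\le\mathrm{Int}_{v^*\mathfrak{F}}(CD)\le U$, concludes $CD=\langle C,D\rangle=U\in v^*\mathfrak{F}$, and reads off that the cyclic primary subgroup $C$ of $CD$ is $K$-$\mathfrak{F}$-subnormal in $CD$.

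Assertion $(2)$ would follow the same scheme with ``cyclic primary'' replaced by ``Sylow'', $v^*\mathfrak{F}$ by $\overline{w}\mathfrak{F}$, $C_\mathfrak{F}$ by $S_\mathfrak{F}$, and Proposition~\ref{p1}$(2)$ by Proposition~\ref{p1}$(1)$. The one extra wrinkle is in the inclusion $S_\mathfrak{F}(G)\le\mathrm{Int}_{\overline{w}\mathfrak{F}}(G)$, where a Sylow $q$-subgroup $Q_0$ of $UN$ need not be Sylow in $G$: one picks a Sylow $q$-subgroup $Q$ of $G$ with $Q\cap UN=Q_0$, notes $Q\cap N\le Q_0$ and hence $Q\cap Q_0N=Q_0$ by Dedekind's identity, so that $Q$ $K$-$\mathfrak{F}$-$sn$ $QN$ (Proposition~\ref{p1}$(1)$) gives $Q_0=Q\cap Q_0N$ $K$-$\mathfrak{F}$-$sn$ $Q_0N$ via Lemma~\ref{l3.2}$(1)$; since $Q_0N/N$ is Sylow in $UN/N\in\overline{w}\mathfrak{F}$, Lemma~\ref{l3.1}$(2)$ gives $Q_0N$ $K$-$\mathfrak{F}$-$sn$ $UN$, and transitivity yields $Q_0$ $K$-$\mathfrak{F}$-$sn$ $UN$, so $UN\in\overline{w}\mathfrak{F}$. (In the reverse inclusion $P$ is automatically Sylow in $PD$.)

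The main obstacle is $(4)$. One must show that $v^*\mathfrak{F}$ and $\overline{w}\mathfrak{F}$ coincide with the classes of all $G$ satisfying $G=\mathrm{Z}_{v^*\mathfrak{F}}(G)$, respectively $G=\mathrm{Z}_{\overline{w}\mathfrak{F}}(G)$; I describe $v^*\mathfrak{F}$. That every group in $v^*\mathfrak{F}$ has all chief factors $v^*\mathfrak{F}$-central would be verified by checking, for a chief factor $H/K$ of $G\in v^*\mathfrak{F}$, that each cyclic primary subgroup of $(H/K)\rtimes(G/C_G(H/K))$ is $K$-$\mathfrak{F}$-subnormal, splitting these into the ones meeting the base and the ones projecting onto the top and applying Lemma~\ref{l3.1}. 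The converse I would prove by induction on $|G|$: if $V$ is a minimal normal subgroup that is $v^*\mathfrak{F}$-central and $G/V\in v^*\mathfrak{F}$, then each cyclic primary $C\le G$ satisfies $CV$ $K$-$\mathfrak{F}$-$sn$ $G$ by Lemma~\ref{l3.1}$(2)$, and it remains to show $C$ $K$-$\mathfrak{F}$-$sn$ $CV$. When $V$ is a $p$-group this is immediate if $C$ is a $p$-group, and otherwise one passes to the split extension $V\rtimes(G/C_G(V))\in v^*\mathfrak{F}$, in which the image of $C$ is a complement to $V$, hence a cyclic primary subgroup and hence $K$-$\mathfrak{F}$-subnormal, and pulls this back along $CV\to CV/C_{CV}(V)$ by Lemma~\ref{l3.1}$(2)$. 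The genuinely hard case is $V$ non-abelian (a direct power of a non-abelian simple group): then $C$ and $V$ may meet non-trivially and $CV/C_{CV}(V)$ need not split over the image of $V$, so one has to analyse $C_{CV}(V)$, $C\cap V$ and the embedding $CV/C_{CV}(V)\hookrightarrow V\rtimes(G/C_G(V))\in v^*\mathfrak{F}$ carefully; I expect this to be the technical heart of the proposition. The statement for $\overline{w}\mathfrak{F}$ is obtained in the same way with Sylow subgroups in place of cyclic primary subgroups.
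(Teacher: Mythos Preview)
Your arguments for $(1)$, $(2)$, $(3)$ are correct and essentially match the paper's. The paper's proof of the inclusion $\mathrm{Int}_{v^*\mathfrak{F}}(G)\le C_\mathfrak{F}(G)$ is slightly shorter than yours: rather than proving your property~(b), it simply extends the cyclic primary subgroup $P$ to a $v^*\mathfrak{F}$-maximal subgroup $M$ of $G$ (using $\mathfrak{N}\subseteq v^*\mathfrak{F}$), observes $P\,\mathrm{Int}_{v^*\mathfrak{F}}(G)\le M\in v^*\mathfrak{F}$, and invokes heredity. Your explicit treatment of the Dedekind step in $(2)$ fills in a detail the paper suppresses with ``the proof of $(2)$ is the same''.

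The real divergence, and the genuine gap, is in $(4)$. You set out to prove $Z(v^*\mathfrak{F})\subseteq v^*\mathfrak{F}$ by induction on a minimal normal subgroup $V$, and you correctly identify that the non-abelian case is the crux --- but you do not actually resolve it; you only sketch an embedding $CV/C_{CV}(V)\hookrightarrow V\rtimes(G/C_G(V))$ and say you ``expect this to be the technical heart''. That analysis is not straightforward (one must control $CV\cap C_G(V)$, which need not be trivial), and as written the argument is incomplete. Your first direction (that $G\in v^*\mathfrak{F}$ has all chief factors $v^*\mathfrak{F}$-central) is also only sketched, and the passage from cyclic primary subgroups of $G$ to those of the semidirect product $(H/K)\rtimes(G/C_G(H/K))$ is not justified; in fact this direction is automatic from the general inclusion $\mathfrak{H}\subseteq Z\mathfrak{H}$.

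The paper avoids the whole difficulty by using the bound $Z\mathfrak{H}\subseteq\mathbf{E}_\Phi\mathfrak{H}$ (recorded in the introduction, from \cite{bb1}) together with the fact that $Z\mathfrak{H}$ is hereditary when $\mathfrak{H}$ is (Proposition~\ref{l5}). A minimal-order $G\in Z(\overline{w}\mathfrak{F})\setminus\overline{w}\mathfrak{F}$ is then $\overline{w}\mathfrak{F}$-critical, has $|\pi(G)|>1$ (since $\mathfrak{N}\subseteq\overline{w}\mathfrak{F}$), and satisfies $\Phi(G)\neq 1$ with $G/\Phi(G)\in\overline{w}\mathfrak{F}$. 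For any Sylow subgroup $P$ one has $P\Phi(G)<G$, hence $P\Phi(G)\in\overline{w}\mathfrak{F}$ by criticality, so $P$ is $K$-$\mathfrak{F}$-subnormal in $P\Phi(G)$; combined with $P\Phi(G)$ $K$-$\mathfrak{F}$-$sn$ $G$ (from $G/\Phi(G)\in\overline{w}\mathfrak{F}$), transitivity gives $P$ $K$-$\mathfrak{F}$-$sn$ $G$ and thus $G\in\overline{w}\mathfrak{F}$, a contradiction. The same works verbatim for $v^*\mathfrak{F}$. This argument never touches non-abelian chief factors and is the missing idea in your plan.
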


Recall \cite{VM} that a Schmidt $(p, q)$-group
is a Schmidt group with a normal Sylow $p$-subgroup. An \emph{$N$-critical graph} $\Gamma_{Nc}(G)$ of a
 group $G$ \cite[Definition 1.3]{VM} is a directed graph on the vertex set $\pi(G)$ of all prime divisors
 of $|G|$ and $(p, q)$ is an edge of   $\Gamma_{Nc}(G)$ iff $G$ has  a Schmidt $(p, q)$-subgroup.
 An \emph{$N$-critical graph} $\Gamma_{Nc}(\mathfrak{X})$ of a class of groups $\mathfrak{X}$
 \cite[Definition 3.1]{VM} is a directed graph on the vertex set $\pi(\mathfrak{X})=\cup_{G\in\mathfrak{X}}\pi(G)$
 such that $\Gamma_{Nc}(\mathfrak{X})=\cup_{G\in\mathfrak{X}}\Gamma_{Nc}(G)$.

\begin{proposition}[{\cite[Theorem 5.4]{VM}}]\label{5.4}
Let $\sigma=\{\pi_i\mid i\in I\}$ be a partition of the vertex set $V(\Gamma_{Nc}(\mathfrak{X}))$ such that for $i\neq j$ there are no edges between $\pi_i$ and $\pi_j$. Then every $\mathfrak{X}$-group is
the direct product of its Hall $\pi_k$-subgroups, where $k\in \{i \in I \mid \pi(G) \cap \pi_k\neq\emptyset\}$.
\end{proposition}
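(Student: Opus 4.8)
The plan is to derive Proposition~\ref{5.4} from the trivial monotonicity $\Gamma_{Nc}(G)\subseteq\Gamma_{Nc}(\mathfrak{X})$ for $G\in\mathfrak{X}$ (every Schmidt $(p,q)$-subgroup of $G$ is a Schmidt $(p,q)$-subgroup of an $\mathfrak{X}$-group) together with the following self-contained statement, which I regard as the real content: $(\star)$ \emph{if $G$ is a finite group and $\tau=\{\tau_i\mid i\in I\}$ is a partition of $\mathbb{P}$ with no edge of $\Gamma_{Nc}(G)$ between distinct members, then $G$ is the direct product of its Hall $\tau_i$-subgroups, i.e. $G\in\mathfrak{N}_\tau$.} Indeed, applying $(\star)$ to the partition of $\pi(G)$ into the connected components of $\Gamma_{Nc}(G)$ already yields the proposition, since each $\pi_k$ of the statement is a union of such components, so the Hall $\pi_k$-subgroup of $G$ is a product of normal Hall subgroups of $G$, hence itself normal.

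I would prove $(\star)$ by induction on $|G|$, using two standard facts: $\mathfrak{N}_\tau$ is a hereditary saturated formation, and $\Gamma_{Nc}(G/N)\subseteq\Gamma_{Nc}(G)$ for every $N\trianglelefteq G$. (For the latter, pull a Schmidt $(p,q)$-section of $G/N$ back to a subgroup $R$ with $R\cap N\leq\Phi(R)$; then $R/\Phi(R)$ is again a Schmidt $(p,q)$-group, so $R$ is a $\{p,q\}$-group whose normal Sylow $p$-subgroup forces every Schmidt subgroup of $R$ to have type $(p,q)$, and $R$ possesses one since it is non-nilpotent.) The hypothesis of $(\star)$ is then inherited by $G/\Phi(G)$, by every $G/N$, and by every subgroup of $G$. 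Consequently, if $\Phi(G)\neq1$ we finish by induction and saturation; and if $G$ has two distinct minimal normal subgroups $N_1,N_2$, then $G$ embeds subdirectly into $(G/N_1)\times(G/N_2)$, both of whose factors lie in $\mathfrak{N}_\tau$ by induction, so $G\in\mathfrak{N}_\tau$ because $\mathfrak{N}_\tau$ is a formation. Hence we may assume $\Phi(G)=1$ and $G$ has a unique minimal normal subgroup $N$.

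If $N$ is abelian it is an elementary abelian $p$-group, $p\in\tau_j$ say, and $\Phi(G)=1$ provides a complement: $G=N\rtimes M$ with $M\cong G/N\in\mathfrak{N}_\tau$ by induction, $M=\prod_i M_i$ with $M_i$ the normal Hall $\tau_i$-subgroup of $M$. For $i\neq j$ I claim $M_i$ centralizes $N$: otherwise a Sylow $b$-subgroup $Q$ of $M_i$ ($b\in\tau_i$) acts non-trivially on the $p$-group $N$, whence $[N,Q]\rtimes Q$ is a non-nilpotent $\{p,b\}$-group with normal Sylow $p$-subgroup and therefore contains a Schmidt $(p,b)$-subgroup of $G$ — impossible, as $p$ and $b$ lie in different members of $\tau$. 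Thus $K:=\prod_{i\neq j}M_i\leq C_G(N)$; being normalized by $M$ and centralizing $N$, $K$ is normal in $G=NM$, so each $M_i$ ($i\neq j$) is characteristic in $K$ and is a normal Hall $\tau_i$-subgroup of $G$, while $NM_j$ (the full preimage of $M_j\trianglelefteq G/N$) is a normal Hall $\tau_j$-subgroup of $G$; hence $G\in\mathfrak{N}_\tau$. If $N$ is non-abelian and $N<G$, then $N$ is a smaller group satisfying the hypothesis, so $N\in\mathfrak{N}_\tau$ by induction; writing $N=S_1\times\dots\times S_m$ as a product of isomorphic simple groups $S_i\cong S$, this forces $\pi(S)\subseteq\tau_j$ for a single $j$, so $N$ is a $\tau_j$-group. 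Since $C_G(N)=1$ (as $N$ is the unique minimal normal subgroup and $Z(N)=1$), any prime $b\in\pi(G/N)\setminus\tau_j$ would give a non-trivial Sylow $b$-subgroup $Q$ of $G$ acting non-trivially and coprimely on $N=S^m$; a short analysis of this action — separating the cases in which $Q$ does and does not permute the factors $S_i$ — produces a $Q$-invariant Sylow $c$-subgroup $P$ of $N$ with $c\in\pi(S)\subseteq\tau_j$ and $[P,Q]\neq1$, hence a Schmidt $(c,b)$-subgroup of $G$, once more contradicting the hypothesis. So $\pi(G/N)\subseteq\tau_j$, $G$ is a $\tau_j$-group, and $G\in\mathfrak{N}_\tau$ trivially.

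The one remaining case is $N=G$, i.e. $G$ non-abelian simple (the subcase $N=G$ abelian gives $G\cong C_p$, which is trivial). Here $(\star)$ amounts to saying that the underlying undirected graph of $\Gamma_{Nc}(G)$ is connected. This is the genuine obstacle, and I expect it to require the classification of the finite simple groups: one has to verify, simple group by simple group, that for each prime $r\mid|G|$ some Schmidt subgroup of $G$ links $r$ to a fixed prime — for instance via Borel or parabolic subgroups, or via normalizers of maximal tori, in the groups of Lie type, via point and set stabilizers in the alternating groups, and by direct inspection for the sporadic groups. Everything else is the routine formation-theoretic induction sketched above; and the abelian-socle step in particular is what exhibits the condition on $\Gamma_{Nc}$ as exactly the right one.
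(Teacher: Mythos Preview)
The paper does not supply a proof of Proposition~\ref{5.4}; it is quoted verbatim from \cite[Theorem~5.4]{VM} and used as a black box. There is therefore nothing in the present paper to compare your argument against. That said, your outline is the standard one and is essentially how the result is obtained in \cite{VM}: reduce from the class $\mathfrak{X}$ to a single group via $\Gamma_{Nc}(G)\subseteq\Gamma_{Nc}(\mathfrak{X})$, run the usual formation-theoretic induction to reach a primitive group, manufacture Schmidt subgroups from non-trivial coprime actions on the socle, and isolate the non-abelian simple case as the CFSG-dependent core (connectedness of $\Gamma_{Nc}(S)$ for every finite simple $S$). Your identification of that last point as ``the genuine obstacle'' is exactly right and matches the source.

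One correction is needed in your verification of $\Gamma_{Nc}(G/N)\subseteq\Gamma_{Nc}(G)$. With $R$ a minimal supplement to $N$ in the preimage of a Schmidt $(p,q)$-section, you get $R\cap N=\Phi(R)$ and $R/\Phi(R)$ a Schmidt $(p,q)$-group, but $\Phi(R)$ need not be a $p$-group, so $R$ need not have a normal Sylow $p$-subgroup as you assert. The repair is short: since $R/\Phi(R)$ is not $p$-nilpotent, $R$ is not $p$-nilpotent; by It\^o's theorem a minimal non-$p$-nilpotent subgroup of $R$ is a Schmidt group with normal Sylow $p$-subgroup, and as $\pi(R)=\{p,q\}$ it is a Schmidt $(p,q)$-group. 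Similarly, in the non-abelian-socle step you do not need to ``separate cases'' on whether $Q$ permutes the simple factors: coprime action (with $Q$ a $b$-group, hence soluble) gives a $Q$-invariant Sylow $c$-subgroup $P_c$ of $N$ for each $c\in\pi(N)$, and if $[P_c,Q]=1$ for every $c$ then each $P_c\leq C_N(Q)$ forces $|C_N(Q)|=|N|$, contradicting $C_G(N)=1$. With these two tweaks your sketch is complete up to the simple case, which indeed requires the classification.
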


It is important to note that the main idea of the proves of Theorems \ref{t1.0} and \ref{t1.1} is

\begin{proposition}[{\cite[1, Theorem 2.8(ii)]{s5}}]\label{pr0}
   Let $\mathfrak{F}$ be a saturated formation. If $\mathrm{F}^*(G)\leq \mathrm{Z}_\mathfrak{F}(G)$, then $G\in\mathfrak{F}$.
\end{proposition}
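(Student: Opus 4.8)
The plan is to deduce the statement from two facts: the self-centralizing property of the generalized Fitting subgroup, and the behaviour of the $\mathfrak{F}$-hypercenter of a saturated formation. First I would record that $C_G(\mathrm{F}^*(G))\subseteq\mathrm{F}^*(G)$ — this is precisely the property of $\mathrm{F}^*$ quoted in the Introduction (see \cite[X, Theorem 13.13]{19}), which $\mathrm{F}(G)$ need not have. Since $\mathrm{F}^*(G)\subseteq\mathrm{Z}_\mathfrak{F}(G)$ by hypothesis, this yields the chain
$$C_G(\mathrm{Z}_\mathfrak{F}(G))\subseteq C_G(\mathrm{F}^*(G))\subseteq\mathrm{F}^*(G)\subseteq\mathrm{Z}_\mathfrak{F}(G).$$

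The second step is the generalized stability-group fact for saturated formations: if $\mathfrak{F}$ is saturated, then $G/C_G(\mathrm{Z}_\mathfrak{F}(G))\in\mathfrak{F}$. I would prove it by taking a chief series of $G$ passing through $\mathrm{Z}_\mathfrak{F}(G)$ and working with the canonical local definition $F$ of $\mathfrak{F}$: for each chief factor $H/K$ lying inside $\mathrm{Z}_\mathfrak{F}(G)$, say a $p$-group, its $\mathfrak{F}$-centrality gives $G/C_G(H/K)\in F(p)\subseteq\mathfrak{F}$; intersecting these centralizers produces a normal subgroup $D$ with $G/D\in\mathfrak{F}$ (a subdirect product of groups from $\mathfrak{F}$), while $D/C_G(\mathrm{Z}_\mathfrak{F}(G))$ is the nilpotent stability group of the series, which is absorbed by $\mathfrak{F}$ precisely because each $F(p)$ in the canonical local definition is closed under extensions by normal $p$-subgroups. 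Granting this, $G/\mathrm{Z}_\mathfrak{F}(G)$ is an epimorphic image of $G/C_G(\mathrm{Z}_\mathfrak{F}(G))\in\mathfrak{F}$ by the displayed inclusion, hence $G/\mathrm{Z}_\mathfrak{F}(G)\in\mathfrak{F}$.

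Finally I would glue the two halves of the chief series. Along a chief series of $G$ through $\mathrm{Z}_\mathfrak{F}(G)$, every factor below $\mathrm{Z}_\mathfrak{F}(G)$ is $\mathfrak{F}$-central by the definition of the $\mathfrak{F}$-hypercenter, and every factor above it is, through the canonical isomorphism, a chief factor of $G/\mathrm{Z}_\mathfrak{F}(G)\in\mathfrak{F}$ with the same associated section and centralizer quotient; since every saturated formation is $Z$-saturated (recalled before Proposition \ref{l5}), all chief factors of the $\mathfrak{F}$-group $G/\mathrm{Z}_\mathfrak{F}(G)$ are $\mathfrak{F}$-central, hence so are all chief factors of $G$. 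Thus $G=\mathrm{Z}_\mathfrak{F}(G)$, and using $Z$-saturation once more, $G\in\mathfrak{F}$.

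The main obstacle is the stability-group step $G/C_G(\mathrm{Z}_\mathfrak{F}(G))\in\mathfrak{F}$: this is the only place where saturation genuinely enters, and it is delicate because one must separate the irreducible contribution of the individual chief factors (governed by the local function $F$) from the unipotent contribution of the stability group (governed by the closure of each $F(p)$ under extensions by normal $p$-subgroups). The remaining parts are routine chief-series bookkeeping that uses only that $\mathfrak{F}$ is a formation and is $Z$-saturated.
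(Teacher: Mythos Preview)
The paper does not prove Proposition~\ref{pr0}; it is quoted from \cite[1, Theorem~2.8(ii)]{s5} and invoked as a black box in the proofs of Theorems~\ref{t1.0} and~\ref{t1.1}. There is therefore no in-paper argument to compare yours against.

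Your outline is a correct version of the standard proof. Two minor comments. First, when you write ``say a $p$-group'' you tacitly treat only abelian chief factors of $G$ below $\mathrm{Z}_\mathfrak{F}(G)$; non-abelian ones may occur, but for those $\mathfrak{F}$-centrality directly yields $G/C_G(H/K)\in\mathfrak{F}$, and the Hall--Kaloujnine nilpotency of $D/C_G(\mathrm{Z}_\mathfrak{F}(G))$ still holds because one has $[D,H]\leq K$ regardless of whether $H/K$ is abelian. Second, the absorption step as you state it (a nilpotent normal kernel with $\mathfrak{F}$-quotient forced into $\mathfrak{F}$) is not valid as a single stroke; it must, as you yourself indicate, be organised prime by prime via $\mathfrak{N}_pF(p)=F(p)$ for the canonical local definition, and this is exactly where saturation does its work. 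Once $G/C_G(\mathrm{Z}_\mathfrak{F}(G))\in\mathfrak{F}$ is established, your gluing via $Z$-saturation is fine: the centralizer quotient attached to a chief factor of $G$ above $\mathrm{Z}_\mathfrak{F}(G)$ coincides with that of the corresponding chief factor of $G/\mathrm{Z}_\mathfrak{F}(G)$, so $\mathfrak{F}$-centrality transfers.
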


\section{Proves}

 Let $\mathfrak{F}$ be a saturated formation. Recall \cite[p. 95]{f4} that the intersection of all maximal subgroups   $M$ of $G$ with $G/\mathrm{Core}_GM\not\in\mathfrak{F}$ is denoted by $\Delta_\mathfrak{F}(G)$. If $G$ does not have such subgroups, then   $\Delta_\mathfrak{F}(G)=G$.

\begin{lemma}[\hspace{-0.4mm}{\cite[p. 96]{f4}}]\label{delt}
  Let $\mathfrak{F}$ be a saturated formation. Then  $\Delta_\mathfrak{F}(G)/\Phi(G)=\mathrm{Z}_\mathfrak{F}(G/\Phi(G))$.
\end{lemma}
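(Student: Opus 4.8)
The plan is to pass to $\overline{G}=G/\Phi(G)$ and prove there the sharper identity $\Delta_{\mathfrak F}(\overline{G})=\mathrm{Z}_{\mathfrak F}(\overline{G})$, from which the statement is immediate since $\Delta_{\mathfrak F}(G)/\Phi(G)=\Delta_{\mathfrak F}(\overline{G})$. This last equality is the routine part: every maximal subgroup of $G$ contains $\Phi(G)$, the map $M\mapsto M/\Phi(G)$ is a bijection onto the maximal subgroups of $\overline{G}$, and under it $\mathrm{Core}_{\overline{G}}(M/\Phi(G))=\mathrm{Core}_{G}(M)/\Phi(G)$ and $\overline{G}/\mathrm{Core}_{\overline{G}}(M/\Phi(G))\cong G/\mathrm{Core}_{G}(M)$, so the $\mathfrak F$-abnormal maximal subgroups correspond. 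Along the way I record two facts, both proved the same way: $\Delta_{\mathfrak F}(H)$ and $\mathrm{Z}_{\mathfrak F}(H)$ are normal in $H$ (the family of $\mathfrak F$-abnormal maximal subgroups being conjugation-invariant), and $\Delta_{\mathfrak F}(H/L)=\Delta_{\mathfrak F}(H)/L$ whenever $L\trianglelefteq H$ and $L\le\Delta_{\mathfrak F}(H)$. It is convenient to argue by induction on $|G|$, handling $\Phi(G)\neq 1$ by the reduction just described, so that the full statement may be used for proper quotients of $G$.

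\emph{The inclusion $\mathrm{Z}_{\mathfrak F}(G)\subseteq\Delta_{\mathfrak F}(G)$} needs no hypothesis on $\Phi(G)$. Suppose $M$ is maximal with $G/\mathrm{Core}_{G}(M)\notin\mathfrak F$ but $\mathrm{Z}_{\mathfrak F}(G)\not\le M$; then $G=M\,\mathrm{Z}_{\mathfrak F}(G)$. In the primitive quotient $Q=G/\mathrm{Core}_{G}(M)$ the image of $\mathrm{Z}_{\mathfrak F}(G)$ is a nontrivial normal subgroup lying in $\mathrm{Z}_{\mathfrak F}(Q)$, because for any formation the $\mathfrak F$-hypercenter maps into the $\mathfrak F$-hypercenter of a quotient ($\mathfrak F$-centrality of a chief factor passes to its image). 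Hence $\mathrm{Z}_{\mathfrak F}(Q)$ contains a minimal normal subgroup of the primitive group $Q$, and a look at the three types of primitive groups gives $Q\in\mathfrak F$: for type $1$, $Q\cong\mathrm{Soc}(Q)\rtimes\bigl(Q/C_{Q}(\mathrm{Soc}(Q))\bigr)\in\mathfrak F$ by the definition of $\mathfrak F$-centrality; for type $2$, $C_{Q}(\mathrm{Soc}(Q))=1$ and $Q$ is an epimorphic image of $\mathrm{Soc}(Q)\rtimes Q\in\mathfrak F$; for type $3$ one applies these to the quotients of $Q$ by its two minimal normal subgroups and uses closure under subdirect products. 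This contradicts $Q\notin\mathfrak F$, so $\mathrm{Z}_{\mathfrak F}(G)\le M$ for every such $M$.

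\emph{The inclusion $\Delta_{\mathfrak F}(G)\subseteq\mathrm{Z}_{\mathfrak F}(G)$}, under $\Phi(G)=1$. If $G$ has no $\mathfrak F$-abnormal maximal subgroup, then $G$ is a subdirect product of the groups $G/\mathrm{Core}_{G}(M)\in\mathfrak F$ (their cores meeting in $\Phi(G)=1$), so $G\in\mathfrak F$ and the identity is clear. Otherwise I must show that every $G$-chief factor below $\Delta_{\mathfrak F}(G)$ is $\mathfrak F$-central; using $\Delta_{\mathfrak F}(G/L)=\Delta_{\mathfrak F}(G)/L$ and induction on $|G|$ applied to $G/L$, it suffices to prove this for a minimal normal subgroup $L\le\Delta_{\mathfrak F}(G)$. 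If $L$ is abelian, it has a complement $M$ (Gaschütz, as $\Phi(G)=1$); since $L\le\Delta_{\mathfrak F}(G)\not\le M$, the subgroup $M$ is $\mathfrak F$-normal, and as $L$ is minimal normal with $L\not\le M$ we get $\mathrm{Core}_{G}(M)\cap L=1$, hence $\mathrm{Core}_{G}(M)\le C_{G}(L)$; therefore the action of $G$ on $L$ factors through $G/\mathrm{Core}_{G}(M)$ and $L\rtimes\bigl(G/C_{G}(L)\bigr)$ is an epimorphic image of $G/\mathrm{Core}_{G}(M)\in\mathfrak F$, so $L$ is $\mathfrak F$-central. If $L$ is non-abelian, put $C=C_{G}(L)$; then $L\cap C=\mathrm{Z}(L)=1$, so $LC/C\cong L$ is a non-abelian minimal normal subgroup of $G/C$, which hence cannot sit inside the (nilpotent) Frattini subgroup, and there is an $\mathfrak F$-normal maximal $M\ge C$ with $L\not\le M$; one checks $\mathrm{Core}_{G}(M)=C$, so $G/C\in\mathfrak F$ is primitive of type $2$ with socle isomorphic to $L$. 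Feeding $G/C$ into the canonical local definition $f$ of the saturated formation $\mathfrak F$ then yields $L\rtimes(G/C)\in\mathfrak F$: each of its chief factors is either $G$-isomorphic to $L$, with associated centralizer-quotient isomorphic to $G/C\in f(p)$ for every prime $p\mid|L|$, or a chief factor of $G/C$; and this says exactly that $L$ is $\mathfrak F$-central.

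\emph{Main obstacle.} The technical heart is the passage from an arbitrary chief factor below $\Delta_{\mathfrak F}(G)$ to the minimal-normal case. Factoring out a minimal normal subgroup $L$ can destroy $\Phi=1$, and in fact $\Phi(G/L)\le\Delta_{\mathfrak F}(G/L)=\Delta_{\mathfrak F}(G)/L$, so one must still check that the chief factors of $G/L$ inside $\Phi(G/L)$ are $\mathfrak F$-central — i.e. that the identity being proved propagates correctly through the Frattini subgroup. This, along with the non-abelian case above, is the only place where saturation of $\mathfrak F$ is genuinely used; indeed Lemma~\ref{delt} and the good behaviour of $\mathrm{Z}_{\mathfrak F}$ under the Frattini quotient (invoked in the reduction) are two aspects of one and the same fact about saturated formations.
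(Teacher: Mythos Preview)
The paper does not prove Lemma~\ref{delt} at all; it is quoted from Shemetkov's monograph \cite[p.~96]{f4} and used as a black box. So there is no in-paper argument to compare against, and your proposal must stand on its own merits.

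Your reduction to $\Phi(G)=1$ via the bijection $M\mapsto M/\Phi(G)$ is correct, as is the inclusion $\mathrm{Z}_{\mathfrak F}(G)\subseteq\Delta_{\mathfrak F}(G)$. For a minimal normal $L\le\Delta_{\mathfrak F}(G)$ your verification that $L$ is $\mathfrak F$-central is also fine, but in the non-abelian case you work harder than needed: once $G/C\in\mathfrak F$ is primitive of type~2 with socle isomorphic to $L$, the group $L\rtimes(G/C)$ is primitive of type~3, hence a subdirect product of two copies of $G/C\in\mathfrak F$. This is precisely the trick you already used for type~3 in the \emph{other} inclusion, and it needs only the formation axioms, not the canonical local definition and not saturation. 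So your claim that saturation is ``genuinely used'' in the non-abelian case is mistaken.

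The substantive problem is the one you yourself call the ``main obstacle'', and you do not actually resolve it. Your induction on $|G|$ applied to $G/L$ yields only $\Delta_{\mathfrak F}(G/L)/\Phi(G/L)=\mathrm{Z}_{\mathfrak F}\bigl((G/L)/\Phi(G/L)\bigr)$; to conclude $\Delta_{\mathfrak F}(G/L)\le\mathrm{Z}_{\mathfrak F}(G/L)$ you still need that every chief factor of $G/L$ inside $\Phi(G/L)$ is $\mathfrak F$-central, and your final paragraph merely asserts that ``saturation handles this'' without giving an argument. That is a genuine gap: for a general saturated $\mathfrak F$ it is \emph{not} automatic that $\Phi(H)\le\mathrm{Z}_{\mathfrak F}(H)$, so something specific to the situation (namely that these Frattini factors sit below $\Delta_{\mathfrak F}$) must be exploited. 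One clean way to close it is to abandon the induction and argue chief-factor by chief-factor with the canonical local definition $F$: for an abelian $p$-chief factor $H/K$ below $\Delta_{\mathfrak F}(G)$ with $\Phi(G)=1$, choose a maximal subgroup $M$ with $\mathrm{O}^{p}(G)\mathrm{O}_{p}(G)\le M$ that misses a complemented minimal normal $p$-subgroup inside $\Delta_{\mathfrak F}(G)$, and read off $G/C_G(H/K)\in F(p)$ from $G/\mathrm{Core}_G(M)\in\mathfrak F$ together with $F(p)=\mathfrak G_pF(p)$. This is essentially Shemetkov's route. As written, your proposal is a sound outline whose decisive step is named but not carried out.
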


\begin{proof}[Proof of Theorem \ref{t1.0}] $(1)$
Suppose that $\mathfrak{F}$-contains  every group $G$ with all maximal subgroups $\tilde{\mathrm{F}}(G)$-$K$-$\mathfrak{F}$-subnormal.

 Since every maximal subgroup of a nilpotent group $G$ is normal, it is  $\tilde{\mathrm{F}}(G)$-$K$-$\mathfrak{F}$-subnormal. Hence $G\in\mathfrak{F}$. So $\mathfrak{N}\subseteq\mathfrak{F}$.  From $\Phi(G)\subseteq\mathrm{\tilde{F}}(G)$ it follows that if $G/\Phi(G)\in\mathfrak{F}$, then all maximal subgroups of $G$ are    $\tilde{\mathrm{F}}(G)$-$K$-$\mathfrak{F}$-subnormal. Hence $G\in\mathfrak{F}$. Thus $\mathfrak{F}$ is saturated.

Suppose that $\mathfrak{F}$ is saturated and  $\mathfrak{N}\subseteq\mathfrak{F}$.

Assume now that every maximal subgroup of $G$ is $\tilde{\mathrm{F}}(G)$-$K$-$\mathfrak{F}$-subnormal.
Let  $M$ be a maximal subgroup of $G$ with  $M\tilde{\mathrm{F}}(G)=G$. Then $M\trianglelefteq G$ or  $G/\mathrm{Core}_GM\in\mathfrak{F}$. From $\mathfrak{N}\subseteq\mathfrak{F}$ it follows that  $Z_p\in\mathfrak{F}$ for all $p\in\mathbb{P}$. It means that if $M\trianglelefteq G$, then $G/\mathrm{Core}_GM\simeq Z_p\in\mathfrak{F}$.

   Now $\tilde{\mathrm{F}}(G)\leq \Delta_\mathfrak{F}(G)$. So $\tilde{\mathrm{F}}(G/\Phi(G))=\tilde{\mathrm{F}}(G)/\Phi(G)
\leq\Delta_\mathfrak{F}(G)/\Phi(G)=\mathrm{Z}_\mathfrak{F}(G/\Phi(G))$ by Lemma \ref{delt}. Therefore $G/\Phi(G)\in\mathfrak{F}$ by Proposition \ref{pr0}. Since $\mathfrak{F}$ is saturated, $G\in\mathfrak{F}$.

$(2)$  Suppose that $\mathfrak{F}$-contains  every group $G$ with all maximal subgroups $\mathrm{F}^*(G)$-$K$-$\mathfrak{F}$-sub\-nor\-mal. Let $G$ and $H$ be a group and a simple non-abelian group respectively. Consider $W=H\wr_{reg} G$. It is known that in this case the base $B$  of $W$ is a unique minimal normal non-abelian subgroup of $W$.

From  \cite{20} it follows that there exists a faithful  $\mathbb{F}_pW$-module $A$ for a $p\in\pi(W)$,
  such that  $A\rightarrow E\twoheadrightarrow W$ where $A\stackrel {W}{\simeq} \Phi(E)$ and
 $E/\Phi(E)\simeq W$. Note that $\mathrm{F}^*(E)=\Phi(E)$. Hence all maximal subgroups of $E$ are  $\mathrm{F}^*(E)$-$K$-$\mathfrak{F}$-subnormal. Therefore $E\in\mathfrak{F}$. Thus $G\in\mathfrak{F}$ as a quotient group of $E$. So $\mathfrak{G}\subseteq\mathfrak{F}$. Thus $\mathfrak{G}=\mathfrak{F}$.
\end{proof}

\begin{proof}[Proof of Corollary \ref{kr}]
Let  $M$ be a maximal subgroup of $G$.   If $M\cap\mathrm{F}(G)$ is a maximal subgroup of $\mathrm{F}(G)$, then $\mathrm{F}(G)/(M\cap\mathrm{F}(G))\simeq Z_p$ for some  $p\in\mathbb{P}$. Hence $|G:M|=p$. Since  $G$ is soluble, we see that $M$ is $K$-$\mathfrak{U}$-subnormal by  \cite[Lemma 3.4]{gs3} and $\mathrm{F}(G)=\tilde{\mathrm{F}}(G)$. Thus all maximal subgroups of $G$ are $\tilde{\mathrm{F}}(G)$-$K$-$\mathfrak{U}$-subnormal. Hence $G$ is supersoluble by Theorem \ref{t1.0}.
\end{proof}

\begin{proof}[Proof of Proposition \ref{l5}]  Let $H\leq G$ and
$1=Z_0\trianglelefteq Z_1\trianglelefteq\dots\trianglelefteq Z_n=\mathrm{Z}_\mathfrak{F}(G)$ be a $G$-composition series of $\mathrm{Z}_\mathfrak{F}(G)$. Then
\[1=Z_0\cap H\trianglelefteq Z_1\cap H\trianglelefteq\dots\trianglelefteq Z_n\cap H=\mathrm{Z}_\mathfrak{F}(G)\cap H\]
is a part of normal series of   $H$. Let $Z_{i-1}\leq K\leq T\leq Z_i$ and $T/K$ be a chief factor of  $H$. Then
\[L=(Z_i/Z_{i-1})\rtimes (HC_G(Z_i/Z_{i-1})/C_G(Z_i/Z_{i-1}))\leq (Z_i/Z_{i-1})\rtimes (G/C_G(Z_i/Z_{i-1}))\in\mathfrak{F}.\]
Since $\mathfrak{F}$ is a hereditary formation, we see that  $$HC_G(Z_i/Z_{i-1})/C_G(Z_i/Z_{i-1})\simeq H/C_H(Z_i/Z_{i-1})\in\mathfrak{F}.$$ Note that $C_H(Z_i/Z_{i-1})\leq C_H(T/K)$. Then  $H/C_H(T/K)\in\mathfrak{F}$  as a quotient group of an  $\mathfrak{F}$-group $H/C_H(Z_i/Z_{i-1})$.

Assume that $Z_i/Z_{i-1}$ is non-abelian. Then $H/C_H(T/K)$ is a primitive group of type 2. It is known that in this case $(T/K)\rtimes (H/C_H(T/K))$ is a primitive group of type 3 and its quotient group by any of its minimal normal subgroups is isomorphic to  $H/C_H(T/K)$. Since $\mathfrak{F}$ is a formation, we see that  $(T/K)\rtimes (H/C_H(T/K))\in\mathfrak{F}$.

Assume that $Z_i/Z_{i-1}$ is abelian. Note that $L\in\mathfrak{F}$ and isomorphic groups $$HC_G(Z_i/Z_{i-1})/C_G(Z_i/Z_{i-1})  \textrm{ and }  H/C_H(Z_i/Z_{i-1})$$  acts (by conjugation) in the same way on  $Z_i/Z_{i-1}$. Now $L\simeq (Z_i/Z_{i-1})\rtimes (H/C_H(Z_i/Z_{i-1})).$ From $K\trianglelefteq H$ it follows that
   $M=(Z_i/K)\rtimes (H/C_H(Z_i/Z_{i-1}))\in\mathfrak{F}.$

   Note that  $Z_i/K$ acts (by conjugation) trivially on $T/K$. Thus $M$ acts on $T/K$ in the same way as  $H/C_H(Z_i/Z_{i-1})$. From
   $$(H/C_H(Z_i/Z_{i-1}))/C_{H/C_H(Z_i/Z_{i-1})}(T/K)\simeq H/C_H(T/K)$$
   and   \cite[Corollary 2.2.5]{s9} for $M$ it follows that  $$(T/K)\rtimes (H/C_H(T/K))\in\mathfrak{F}.$$ Thus $\mathrm{Z}_\mathfrak{F}(G)\cap H\leq\mathrm{Z}_\mathfrak{F}(H)$.

Note that $\mathrm{Z}_\mathfrak{F}(G)=\mathrm{Z}_\mathfrak{F}(G)\cap \mathrm{Z}_\mathfrak{F}(G)\leq \mathrm{Z}_\mathfrak{F}(\mathrm{Z}_\mathfrak{F}(G))\leq \mathrm{Z}_\mathfrak{F}(G)$.
Thus $\mathrm{Z}_\mathfrak{F}(G)=\mathrm{Z}_\mathfrak{F}(\mathrm{Z}_\mathfrak{F}(G))$.
 \end{proof}

\begin{proof}[Proof of  Corollary \ref{l5.1}]
  From Proposition \ref{l5} it follows that $\mathrm{Z}_\mathfrak{F}(G)\leq \mathrm{Z}_\mathfrak{F}(H\mathrm{Z}_\mathfrak{F}(G))$. Since \linebreak $H\mathrm{Z}_\mathfrak{F}(G)/\mathrm{Z}_\mathfrak{F}(G)\in\mathfrak{F}$, we see that   $H\mathrm{Z}_\mathfrak{F}(G)/\mathrm{Z}_\mathfrak{F}(H\mathrm{Z}_\mathfrak{F}(G))\in\mathfrak{F}$. Hence $H\mathrm{Z}_\mathfrak{F}(G)\in\mathfrak{F}$.

 Let $M$ be an $\mathfrak{F}$-maximal subgroup of  $G$. Then $M\mathrm{Z}_\mathfrak{F}(G)\in\mathfrak{F}$. So $M\mathrm{Z}_\mathfrak{F}(G)=M$. Thus  $\mathrm{Z}_\mathfrak{F}(G)\leq\mathrm{Int}_\mathfrak{F}(G)$.
\end{proof}

\begin{proof}[Proof of  Proposition \ref{p9}]
 Note that the $\mathfrak{X}$-hypercenter and $K$-$\mathfrak{X}$-subnormality are defined by the set of all primitive   $\mathfrak{X}$-groups.   According to \cite{bb1}   $\mathfrak{F}\subseteq Z\mathfrak{F}\subseteq \textbf{E}_\Phi\mathfrak{F}$. Thus the sets of all primitive $\mathfrak{F}$-groups and $Z\mathfrak{F}$-groups coincide.
\end{proof}

 \begin{proof}[Proof of  Proposition \ref{p1}]
   $(1)$ Let $N\trianglelefteq G$ with  $P$ $K$-$\mathfrak{F}$-$sn\,PN$ for every Sylow subgroup   $P$ of $G$. If $S$ is a weak  $K$-$\mathfrak{F}$-subnormalizer of  $P$ in $G$, then $PN$  $K$-$\mathfrak{F}$-$sn\, SN$  by Lemma \ref{lemN}. Hence $P$  $K$-$\mathfrak{F}$-$sn\, SN$ by   $(3)$ of Lemma \ref{l3.1}.  Now $SN=S$ by the definition of a weak   $K$-$\mathfrak{F}$-subnormalizer. Thus $N\leq S_\mathfrak{F}(G)$.

   From the other hand, since  $\mathfrak{F}$ is a hereditary formation and $PS_\mathfrak{F}(G)$ lies in every weak $K$-$\mathfrak{F}$-subnormalizer of every Sylow subgroup $P$ of $G$, we see that $P$ $K$-$\mathfrak{F}$-$sn\,PS_\mathfrak{F}(G)$ for every Sylow subgroup   $P$ of $G$ by Lemma \ref{l3.2}.  Thus $S_\mathfrak{F}(G)$ is the largest normal subgroup  $N$ of $G$ with $P$ $K$-$\mathfrak{F}$-$sn\,PN$ for every Sylow subgroup $P$ of $G$.

  The proof of $(2)$ is the same. \end{proof}

\begin{proof}[Proof of Proposition \ref{p2}]
 $(1)$  Note that   $\mathfrak{N}\subseteq v^*\mathfrak{F}$ and $v^*\mathfrak{F}$ is a hereditary formation by Proposition~\ref{wv}. Hence $P\mathrm{Int}_{v^*\mathfrak{F}}(G)\in v^*\mathfrak{F}$ for every cyclic primary subgroup   $P$ of $G$.  Therefore $P$ $K$-$\mathfrak{F}$-$sn\,P\mathrm{Int}_{v^*\mathfrak{F}}(G)$  for every cyclic primary subgroup   $P$ of $G$.  Thus $\mathrm{Int}_{v^*\mathfrak{F}}(G)\leq C_\mathfrak{F}(G)$  by $(2)$ of Proposition  \ref{p1}.

From the other hand let  $M$ be  a $v^*\mathfrak{F}$-maximal subgroup of  $G$ and $P$ be a cyclic primary subgroup of  $MC_\mathfrak{F}(G)$.
Since $MC_\mathfrak{F}(G)/C_\mathfrak{F}(G)\in v^*\mathfrak{F}$, we see thta $PC_\mathfrak{F}(G)/C_\mathfrak{F}(G)$    $K$-$\mathfrak{F}$-$sn\,MC_\mathfrak{F}(G)/C_\mathfrak{F}(G)$. Hence $PC_\mathfrak{F}(G)$    $K$-$\mathfrak{F}$-$sn\,MC_\mathfrak{F}(G)$ by $(2)$  of Lemma  \ref{l3.1}. Note that   $P$   $K$-$\mathfrak{F}$-$sn\,PC_\mathfrak{F}(G)$ by Proposition  \ref{p1}. So $P$   $K$-$\mathfrak{F}$-$sn\,MC_\mathfrak{F}(G)$ by $(3)$ of Lemma \ref{l3.1}. Thus $MC_\mathfrak{F}(G)\in v^*\mathfrak{F}$ by the definition of  $v^*\mathfrak{F}$. Hence $MC_\mathfrak{F}(G)=M$. Therefore $C_\mathfrak{F}(G)\leq\mathrm{Int}_{v^*\mathfrak{F}}(G)$. Thus $\mathrm{Int}_{v^*\mathfrak{F}}(G)= C_\mathfrak{F}(G)$.

The proof of  $(2)$ is the same.

$(3)$ Since every cyclic primary subgroup is subnormal in some Sylow subgroup, we see that  $P$ $K$-$\mathfrak{F}$-$sn\,PS_\mathfrak{F}(G)$ for every cyclic primary subgroup $P$ of $G$. So $S_\mathfrak{F}(G)\leq C_\mathfrak{F}(G)$ by Proposition~\ref{p1}.

$(4)$ Assume that   $\overline{w}\mathfrak{F}$ is not a   $Z$-saturated formation. Let chose a minimal order group   $G$ from  $Z(\overline{w}\mathfrak{F})\setminus \overline{w}\mathfrak{F}$. From Proposition \ref{l5} it follows that   $Z\overline{w}\mathfrak{F}$ is a hereditary formation. So  $G$ is $\overline{w}\mathfrak{F}$-critical. Now $|\pi(G)|>1$ by   $(2)$ of Proposition \ref{wv}. From  $\overline{w}\mathfrak{F}\subset Z\overline{w}\mathfrak{F}\subseteq \textbf{E}_\Phi \overline{w}\mathfrak{F}$ it follows that   $\Phi(G)\neq 1$ and $G/\Phi(G)\in \overline{w}\mathfrak{F}$. Let   $P$ be a Sylow subgroup of $G$. Then $P\Phi(G)< G$ and $P\Phi(G)\in \overline{w}\mathfrak{F}$. Hence $P$ $K$-$\mathfrak{F}$-$sn\,P\Phi(G)$. From $G/\Phi(G)\in \overline{w}\mathfrak{F}$ it follows that $P\Phi(G)/\Phi(G)$ $K$-$\mathfrak{F}$-$sn\,G/\Phi(G)$. Therefore $P\Phi(G)$ $K$-$\mathfrak{F}$-$sn\,G$. Thus $P$ $K$-$\mathfrak{F}$-$sn\,G$. It means that  $G\in \overline{w}\mathfrak{F}$, a contradiction. Thus  $\overline{w}\mathfrak{F}$ is a  $Z$-saturated formation. The proof for $v^*\mathfrak{F}$ is the same.
\end{proof}

\begin{proposition}\label{either}
  Let $\mathfrak{F}$ be a hereditary formation such that one of the following claims holds:

  $(1)$  The intersection of all weak  $K$-$\mathfrak{F}$-subnormalizers of all Sylow  subgroups is the  $\mathfrak{F}$-hypercenter.

$(2)$ $\mathfrak{F}$ contains every group  $G$ all whose Sylow subgroups are  $\mathrm{F}^*(G)$-$K$-$\mathfrak{F}$-subnormal.

Then there is a partition   $\sigma$ of $\mathbb{P}$ such that $\mathfrak{F}$ is the class of all  $\sigma$-nilpotent groups.
\end{proposition}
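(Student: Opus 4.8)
The plan is to distill from either hypothesis one uniform situation — $\mathfrak{F}$ is a hereditary, $Z$-saturated formation containing $\mathfrak{N}$ for which the conclusion of Proposition~\ref{pr0} holds as an implication, say $(\star)$: $\mathrm{F}^*(G)\le\mathrm{Z}_\mathfrak{F}(G)\Rightarrow G\in\mathfrak{F}$ — and then to recover the partition $\sigma$ from the $N$-critical graph of $\mathfrak{F}$, proving $\mathfrak{F}=\mathfrak{N}_\sigma$ by the inclusion $\mathfrak{F}\subseteq\mathfrak{N}_\sigma$ (which is immediate from Proposition~\ref{5.4}) and the reverse inclusion (an induction driven by $(\star)$ and by the Schmidt subgroups that the hypothesis forces into $\mathfrak{F}$).

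Under hypothesis $(2)$ the reduction is straightforward. If every Sylow subgroup of $G$ is $K$-$\mathfrak{F}$-subnormal in $G$, then each Sylow subgroup $P$ is $K$-$\mathfrak{F}$-subnormal in $\langle P,\mathrm{F}^*(G)\rangle$ by Lemma~\ref{l3.2}$(1)$ (applied to $P=P\cap\langle P,\mathrm{F}^*(G)\rangle$), so $\overline{w}\mathfrak{F}$ is contained in the class of $(2)$; hence $\overline{w}\mathfrak{F}\subseteq\mathfrak{F}$, and Proposition~\ref{wv}$(2),(3)$ gives $\mathfrak{N}\subseteq\mathfrak{F}=\overline{w}\mathfrak{F}$, which is $Z$-saturated by Proposition~\ref{p2}$(4)$. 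Moreover, if $\mathrm{F}^*(G)\le\mathrm{Z}_\mathfrak{F}(G)$ then $\langle P,\mathrm{F}^*(G)\rangle\le P\,\mathrm{Z}_\mathfrak{F}(G)$, and $P\,\mathrm{Z}_\mathfrak{F}(G)\in\mathfrak{F}$ by Corollary~\ref{l5.1} (since $P\in\mathfrak{N}\subseteq\mathfrak{F}$); hence $\langle P,\mathrm{F}^*(G)\rangle\in\mathfrak{F}$, each Sylow subgroup of $G$ is $K$-$\mathfrak{F}$-subnormal in $\langle P,\mathrm{F}^*(G)\rangle$, and $G\in\mathfrak{F}$ by $(2)$ — i.e. $(\star)$ holds. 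Hypothesis $(1)$ is brought into the same frame more delicately: by Proposition~\ref{p2}$(2)$ it says $\mathrm{Int}_{\overline{w}\mathfrak{F}}(G)=\mathrm{Z}_\mathfrak{F}(G)$ for all $G$, and since $\mathrm{Z}_\mathfrak{F}(G)\le\mathrm{Z}_{\overline{w}\mathfrak{F}}(G)\le\mathrm{Int}_{\overline{w}\mathfrak{F}}(G)$ (the second inequality by Corollary~\ref{l5.1}, $\overline{w}\mathfrak{F}$ being hereditary and $Z$-saturated) one gets $\mathrm{Z}_\mathfrak{F}(G)=\mathrm{Z}_{\overline{w}\mathfrak{F}}(G)$ for all $G$ and, via Proposition~\ref{p9}, $Z\mathfrak{F}=\overline{w}\mathfrak{F}$; one then carries the argument through for $\mathfrak{H}=\overline{w}\mathfrak{F}$ (hereditary, $Z$-saturated, containing $\mathfrak{N}$, with $\mathfrak{H}=\overline{w}\mathfrak{H}$, and satisfying hypothesis~$(1)$, which together with Corollary~\ref{l5.1} provides the needed substitute for $(\star)$), deducing $\mathfrak{H}=\mathfrak{N}_\sigma$ and finally recovering $\mathfrak{F}$ from $\mathfrak{F}\subseteq Z\mathfrak{F}=\mathfrak{N}_\sigma$, $\mathrm{Z}_\mathfrak{F}(G)=\mathrm{Z}_{\mathfrak{N}_\sigma}(G)$ for all $G$, and the saturation of $\mathfrak{N}_\sigma$.

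So assume $\mathfrak{F}$ is hereditary and $Z$-saturated with $\mathfrak{N}\subseteq\mathfrak{F}$ and $(\star)$. Let $\sigma=\{\pi_i\mid i\in I\}$ be the partition of $\mathbb{P}$ into the (weakly) connected components of $\Gamma_{Nc}(\mathfrak{F})$; every prime is a vertex because $\mathfrak{N}\subseteq\mathfrak{F}$, and by construction no edge joins two distinct $\pi_i$, so by Proposition~\ref{5.4} every $\mathfrak{F}$-group is the direct product of its Hall $\pi_i$-subgroups, whence $\mathfrak{F}\subseteq\mathfrak{N}_\sigma$. For the reverse inclusion note that, as $\mathfrak{F}$ is a formation and $\mathfrak{N}_\sigma$ is saturated, a $\sigma$-nilpotent group $G$ belongs to $\mathfrak{F}$ as soon as every primitive group $(H/K)\rtimes(G/C_G(H/K))$ attached to a chief factor of $G$ does, and each such primitive group is a primitive group whose prime divisors all lie in a single $\pi_i$; so it suffices to show that $\mathfrak{F}$ contains every primitive group with prime divisors in one $\pi_i$. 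If not, pick such a group $G$ of least order with $G\notin\mathfrak{F}$; then $G$ is monolithic, $G/\mathrm{Soc}(G)\in\mathfrak{F}$ (whence, $\mathfrak{F}$ being $Z$-saturated, all chief factors of $G$ above $\mathrm{Soc}(G)$ are $\mathfrak{F}$-central), and — using $(\star)$ together with minimality, which handles the abelian-socle case while the non-abelian-socle case is automatic — one sees that $G$ must be primitive. Thus everything reduces to: every primitive group with prime divisors in a fixed $\pi_i$ lies in $\mathfrak{F}$. Here one exploits the connectedness of the subgraph of $\Gamma_{Nc}(\mathfrak{F})$ on $\pi_i$: each of its edges puts a Schmidt group into $\mathfrak{F}$, hence (heredity, closure under quotients) the corresponding minimal Schmidt group; running along a spanning tree of this subgraph and using closure of $\mathfrak{F}$ under subgroups, quotients and subdirect products, together with $(\star)$ and induction, one constructs first all primitive $\{p,q\}$-groups (which are soluble by Burnside's $p^aq^b$-theorem) and then all primitive groups with prime divisors in $\pi_i$.

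The main obstacle is precisely this last propagation along the spanning tree. The difficulty is the apparent circularity inherent in $\overline{w}\mathfrak{F}=\mathfrak{F}$: for a primitive group $P=V\rtimes M$ the subgroups containing $V$ are exactly the $V\rtimes M_0$, and the chains realizing $K$-$\mathfrak{F}$-subnormality of a Sylow subgroup of $M$ in $P$ tend to terminate in a quotient $P/\mathrm{Core}_P(\,\cdot\,)\cong P$, so membership of $P$ in $\mathfrak{F}$ cannot be read off from a smaller configuration; it is exactly the mechanism of $(\star)$ — that is, of Proposition~\ref{pr0} — applied to ambient groups having $P$ among their chief-factor data, in combination with the Schmidt groups delivered by the edges, that closes the induction. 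A second, technically delicate, point is the passage from hypothesis $(1)$ to the reduced situation and the recovery of $\mathfrak{F}$ from $\overline{w}\mathfrak{F}=\mathfrak{N}_\sigma$, which rely on the fact that the $\mathfrak{F}$-hypercenter is determined by the primitive $\mathfrak{F}$-groups and on the saturation of $\mathfrak{N}_\sigma$.
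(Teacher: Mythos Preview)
Your overall architecture matches the paper's --- reduce to a hereditary $Z$-saturated formation containing $\mathfrak{N}$, read off $\sigma$ from $\Gamma_{Nc}(\mathfrak{F})$, obtain $\mathfrak{F}\subseteq\mathfrak{N}_\sigma$ via Proposition~\ref{5.4}, and then prove $\mathfrak{G}_{\pi_i}\subseteq\mathfrak{F}$ --- but the step you yourself label ``the main obstacle'' is not actually carried out, and the tool you propose for it does not work.

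The difficulty is that your condition $(\star)$ is circular precisely where you need it. For a primitive $\pi_i$-group $G$ with abelian socle $N$ one has $\mathrm{F}^*(G)=N$, and $N\le\mathrm{Z}_\mathfrak{F}(G)$ is \emph{equivalent} to $N\rtimes(G/C_G(N))\cong G\in\mathfrak{F}$; so $(\star)$ yields nothing about $G$. You gesture at applying $(\star)$ ``to ambient groups having $P$ among their chief-factor data'', but you neither construct such an ambient group nor explain why its generalized Fitting subgroup lies in its $\mathfrak{F}$-hypercenter. The paper closes this gap with a different and sharper mechanism: it shows that whenever $L$ is a faithful irreducible $\mathbb{F}_qG$-module, $T=L\rtimes G$, and $L\le S_\mathfrak{F}(T)$, then $G\in\mathfrak{F}$ (this follows directly from either hypothesis, without going through $(\star)$). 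Using this it first proves that $\mathfrak{F}$ contains every $q$-closed $\{p,q\}$-group whenever $(p,q)\in\Gamma_{Nc}(\mathfrak{F})$, then deduces that each connected component of $\Gamma_{Nc}(\mathfrak{F})$ is a \emph{complete} digraph, and finally, for a minimal $G\in\mathfrak{G}_{\pi_i}\setminus\mathfrak{F}$, picks $q\in\pi(G)$ with $\mathrm{O}_q(G)=1$, forms $T=L\rtimes G$ with $L$ a faithful irreducible $\mathbb{F}_qG$-module (Lemma~\ref{10.3B}), and verifies $L\le S_\mathfrak{F}(T)$ because $LP$ is a $q$-closed $\{p,q\}$-group for every Sylow subgroup $P$ of $T$. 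Completeness of the component is essential for this last verification; your spanning-tree idea supplies only connectedness and gives no way to conclude $LP\in\mathfrak{F}$ for \emph{all} Sylow subgroups $P$.

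A secondary point: your reduction under hypothesis~$(1)$ is more elaborate than necessary and contains the unjustified assertion $\overline{w}\mathfrak{H}=\mathfrak{H}$ for $\mathfrak{H}=\overline{w}\mathfrak{F}$. The paper handles both hypotheses uniformly by working with $S_\mathfrak{F}(T)$ (Proposition~\ref{p1}) rather than with $\mathrm{Z}_\mathfrak{F}$ and $(\star)$: under~$(1)$ one has $L\le S_\mathfrak{F}(T)=\mathrm{Z}_\mathfrak{F}(T)$, hence $G\cong T/C_T(L)\in\mathfrak{F}$; under~$(2)$ one has $L=\mathrm{F}^*(T)\le S_\mathfrak{F}(T)$, hence $T\in\mathfrak{F}$ and $G\in\mathfrak{F}$. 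This avoids the detour through $\overline{w}\mathfrak{F}$ and the recovery problem you raise at the end.
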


\begin{proof}
$(a)$ \emph{$\mathfrak{N}\subseteq\mathfrak{F}$ is $Z$-saturated}.

Assume that $(1)$ holds. According to Proposition \ref{p9} this statement means the same for   $\mathfrak{F}$ and $Z\mathfrak{F}$. Note that $Z\mathfrak{F}=Z(Z\mathfrak{F})$ by Proposition \ref{p9}. Without lose of generality we may assume that  $\mathfrak{F}$ is $Z$-saturated. Since in every nilpotent group every Sylow subgroup is subnormal and $Z\mathfrak{F}=\mathfrak{F}$ we see that   $\pi(\mathfrak{F})=\mathbb{P}$ and $\mathfrak{N}\subseteq\mathfrak{F}$ by $(1)$.

Assume that $(2)$ holds, i.e.   $\mathfrak{F}$ contains every group   $G$ all whose Sylow subgroups are  $\mathrm{F}^*(G)$-$K$-$\mathfrak{F}$-subnormal. So  $\mathfrak{F}$ contains every group  $G$ all whose Sylow subgroups are   $K$-$\mathfrak{F}$-subnormal. Hence $\mathfrak{F}=\overline{w}\mathfrak{F}$.  Now $\mathfrak{N}\subseteq\mathfrak{F}$ by Proposition \ref{wv} and  $\mathfrak{F}$ is $Z$-saturated by (4) of Proposition \ref{p2}.

 $(b)$ \emph{Assume $L$ is faithful irreducible $\mathbb{F}_pG$-module,    $T=L\rtimes G$  and $L\leq S_\mathfrak{F}(T)$. Then $G\in\mathfrak{F}$.}

Assume that $(1)$ holds. Now $L\leq S_\mathfrak{F}(G)=\mathrm{Z}_{\mathfrak{F}}(T)$. Hence $L\rtimes (T/C_T(L))\in\mathfrak{F}$. Thus $G\simeq T/C_T(L)\in\mathfrak{F}$, the contradiction.

Assume that $(2)$ holds. So $L=\mathrm{F}^*(T)\leq S_\mathfrak{F}(T)$. Now $T\in\mathfrak{F}$ by $(2)$. Thus $G\in\mathfrak{F}$ as a quotient group of $T$, the contradiction.

$(c)$ \emph{Let $\pi(p)=\{q\in\mathbb{P}\,|\,(p, q)\in\Gamma_{Nc}(\mathfrak{F})\}\cup\{p\}$. Then $\mathfrak{F}$ contains every $q$-closed $\{p, q\}$-group for every  $q\in\pi(p)$.}

Assume the contrary. Let $G$ be a minimal order counterexample. Since $\mathfrak{F}$ and the class of all $q$-closed groups are hereditary formations, we see that  $G$ is an $\mathfrak{F}$-critical group, $G$  has a unique minimal normal subgroup $N$ and $G/N\in\mathfrak{F}$. Let $P$ be a Sylow $p$-subgroup of $G$.   If $NP<G$, then $NP\in\mathfrak{F}$. Hence $P$ $K$-$\mathfrak{F}$-$sn\,PN$  and $PN/N$ $K$-$\mathfrak{F}$-$sn\,G/N$. From Lemma \ref{l3.1} it follows that $P$ $K$-$\mathfrak{F}$-$sn\,G$. Since $G$  is a $q$-closed $\{p, q\}$-group, we see that every Sylow subgroup of $G$ is $K$-$\mathfrak{F}$-subnormal. If $(1)$ or $(2)$ hold, then $G\in Z\mathfrak{F}=\mathfrak{F}$ or $G\in\mathfrak{F}$ respectively, a contradiction.

Note that $N$ is a Sylow $q$-subgroup  and $\mathrm{O}_p(G)=1$. By Lemma \ref{10.3B} there exists a faithful irreducible $\mathbb{F}_pG$-module $L$. Let $T=L\rtimes G$.  Therefore for every chief factor $H/K$ of $NL$  a group    $(H/K)\rtimes C_{NL}(H/K)$ is isomorphic to one of the following groups  $Z_p, Z_q$ and a Schmidt $(p, q)$-group with the trivial Frattini subgroup. Note that all these groups belong $\mathfrak{F}$. So  $NL\in Z\mathfrak{F}=\mathfrak{F}$. Note that $L\leq \mathrm{O}_p(T)$. Hence  $L\leq S_\mathfrak{F}(T)$ by Proposition \ref{p1}.
Thus $G\in\mathfrak{F}$ by $(b)$, a contradiction.

From $(c)$ it follows that

 $(d)$ \emph{$\Gamma_{Nc}(\mathfrak{F})$ is undirected, i.e $(p, q)\in\Gamma_{Nc}(\mathfrak{F})$ iff $(q, p)\in\Gamma_{Nc}(\mathfrak{F})$.}

$(e)$ \emph{Let   $p, q$ and $r$ be different primes.  If $(p, r), (q, r)\in\Gamma_{Nc}(\mathfrak{F})$, then   $(p, q)\in\Gamma_{Nc}(\mathfrak{F})$}.

There exists a faithful irreducible    $\mathbb{F}_pZ_q$-module $P$ by Lemma  \ref{10.3B}. Let $G=P\rtimes Z_q$.  Then there exists a faithful irreducible     $\mathbb{F}_rG$-module $R$ by Lemma  \ref{10.3B}. Let $T=R\rtimes G$. From $(c)$ it follows that   $\mathfrak{F}$-contains  all $r$-closed $\{p, r\}$-groups and $\{q, r\}$-groups. Thus $R\leq S_\mathfrak{F}(T)$  by Proposition~\ref{p1}. Thus $G\in\mathfrak{F}$ by $(b)$. Note that $G$  is a Schmidt $(p, q)$-group.

$(f)$   \emph{$\mathfrak{F}=\underset{i\in I}\times\mathfrak{G}_{\pi_i}$ for some partition $\sigma$ of $\mathbb{P}$, i.e.\ $\mathfrak{F}$ is the class of all $\sigma$-nilpotent groups.  }

From $(d)$ and $(e)$ it follows that $\Gamma_{Nc}(\mathfrak{F})$ is a disjoint union of  complete (directed) graphs $\Gamma_i$, $i\in I$. Let $\pi_i=V(\Gamma_i)$. Then $\sigma=\{\pi_i\,|\,i\in I\}$ is a partition of $\mathbb{P}$. From Proposition \ref{5.4} it follows that $\mathfrak{F}\subseteq\underset{i\in I}\times\mathfrak{G}_{\pi_i}$.

Let show that
$\mathfrak{G}_{\pi_i}\subseteq\mathfrak{F}$ for every $p$.
It is true  if $|\pi_i|=1$. Assume now $|\pi_i|>1$.  Suppose the contrary and let a group  $G$ be a minimal order group from $\mathfrak{G}_{\pi_i}\setminus\mathfrak{F}$. Then $G$ has a unique minimal normal subgroup, $\pi(G)\subseteq\pi_i$ and $|\pi(G)|>1$. Note that $\mathrm{O}_q(G)=1$ for some $q\in\pi(G)$. Hence there exists faithful irreducible    $\mathbb{F}_qG$-module $N$ by Lemma \ref{10.3B}. Let $T=N\rtimes G$. Hence $NP\in\mathfrak{F}$ for every Sylow subgroup $P$ of  $T$ by $(c)$. Now $N\leq S_\mathfrak{F}(T)$ by Proposition \ref{p1}.
So $G\in\mathfrak{F}$ by $(b)$, the contradiction.

It means that $\underset{i\in I}\times\mathfrak{G}_{\pi_i}\subseteq\mathfrak{F}$.
Thus $\mathfrak{F}=\underset{i\in I}\times\mathfrak{G}_{\pi_i}$, i.e.\ $\mathfrak{F}$ is the class of all $\sigma$-nilpotent groups.
\end{proof}

\begin{proof}[Proof of Theorem \ref{gb}]
$(1)\Rightarrow(2)$. Since $\mathfrak{F}\subseteq \overline{w}\mathfrak{F}$ by Proposition \ref{wv}, we see that $\mathrm{Z}_\mathfrak{F}(G)\leq\mathrm{Z}_{\overline{w}\mathfrak{F}}(G)$ for every group $G$. Note that $\mathrm{Z}_{\overline{w}\mathfrak{F}}(G)\leq\mathrm{Int}_{\overline{w}\mathfrak{F}}(G)$  for every group $G$ by Corollary \ref{l5.1} and $(4)$ of Proposition \ref{p2}. According to Proposition \ref{p2},   $S_\mathfrak{F}(G)=\mathrm{Int}_{\overline{w}\mathfrak{F}}(G)$ and
  $S_\mathfrak{F}(G)\leq C_\mathfrak{F}(G)$ for every group $G$. From these and   $(1)$ it follows that
  $$\mathrm{Z}_\mathfrak{F}(G)\leq\mathrm{Z}_{\overline{w}\mathfrak{F}}(G)\leq\mathrm{Int}_{\overline{w}\mathfrak{F}}(G)
  =S_\mathfrak{F}(G)\leq C_\mathfrak{F}(G)=\mathrm{Z}_\mathfrak{F}(G)$$
 for every group $G$.  Thus $\mathrm{Z}_\mathfrak{F}(G) =S_\mathfrak{F}(G)$ for every group $G$.

$(2)\Rightarrow(3)$. Directly follows from Proposition \ref{either}.

$(3)\Rightarrow(1)$. Assume that there is a partition $\sigma=\{\pi_i\,|\,i\in I\}$ of $\mathbb{P}$ with $\mathfrak{F}=\times_{i\in I}\mathfrak{G}_{\pi_i}$. Then $\mathfrak{F}$ is a lattice formation. According to \cite[Theorem B and Corollary E.2]{vF} $v^*\mathfrak{F}=\mathfrak{F}$. By \cite[Theorem A and Proposition 4.2]{h4}    $\mathrm{Int}_\mathfrak{F}(G)=\mathrm{Z}_\mathfrak{F}(G)$ holds for every group $G$. By (2) of Proposition \ref{p2}, $C_\mathfrak{F}(G)=\mathrm{Int}_{v^*\mathfrak{F}}(G)$ for every group $G$. Thus
$$C_\mathfrak{F}(G)=\mathrm{Int}_{v^*\mathfrak{F}}(G)=\mathrm{Int}_{\mathfrak{F}}(G)=\mathrm{Z}_\mathfrak{F}(G) $$ for every group $G$.  \end{proof}

\begin{proof}[Proof of Theorem \ref{t1.1}]
$(1)\Rightarrow(2)$. Note that every cyclic primary subgroup is subnormal in some Sylow subgroup. Hence if all Sylow subgroups of   $G$ are $\mathrm{F}^*(G)$-$K$-$\mathfrak{F}$-subnormal, then all cyclic primary subgroups of   $G$ are also $\mathrm{F}^*(G)$-$K$-$\mathfrak{F}$-subnormal. Thus $G\in\mathfrak{F}$.

  $(2)\Rightarrow(3)$. Directly follows from Proposition \ref{either}.


 \smallskip

  $(3)\Rightarrow(1)$. Assume that all cyclic primary subgroups of  $G$ are $\mathrm{F}^*(G)$-$K$-$\mathfrak{F}$-subnormal. Then $\mathrm{F}^*(G)\leq\mathrm{Z}_\mathfrak{F}(G)$  by Proposition   \ref{p1}  and Theorem  \ref{gb}.  Now  $G\in\mathfrak{F}$ by Proposition \ref{pr0}.
\end{proof}

\begin{proof}[Proof of Theorem \ref{new}] $(1)\Rightarrow (2).$  Assume that $G=AB$ where all Sylow subgroups of $A$ and $B$ are  $\mathrm{F}^*(G)$-$K$-$\mathfrak{F}$-subnormal. Since every cyclic primary subgroups $C$ is subnormal in some Sylow subgroup $P$ of $A$, we see that    $C\trianglelefteq\trianglelefteq P$ $K$-$\mathfrak{F}$-$sn\,P\mathrm{F}^*(G)$. Now $C$ $K$-$\mathfrak{F}$-$sn\,C\mathrm{F}^*(G)$ by  Lemma \ref{l3.1}. Hence  $C$ is  $\mathrm{F}^*(G)$-$K$-$\mathfrak{F}$-subnormal. Thus all cyclic primary subgroups of  $A$ are   $\mathrm{F}^*(G)$-$K$-$\mathfrak{F}$-subnormal.  We can prove the same statement for $B$. Now $G\in\mathfrak{F}$ by  $(1)$.

$(2)\Rightarrow (3).$ From $G=GG$ and $(2)$ it follows that $\mathfrak{F}$ contains every group $G$ all whose Sylow subgroups are $\mathrm{F}^*(G)$-$K$-$\mathfrak{F}$-subnormal. Thus
there is a partition   $\sigma$ of $\mathbb{P}$ such that $\mathfrak{F}=\mathfrak{N}_\sigma$  by Theorem  \ref{t1.1}.


$(3)\Rightarrow(1)$.
Let $G=AB$ where all cyclic primary subgroups of  $A$ and $B$ are   $\mathrm{F}^*(G)$-$K$-$\mathfrak{F}$-subnormal. By  \cite[Lemma 11.6]{f4} there are Sylow $p$-subgroups $P_1$, $P_2$ and $P$ of $A$, $B$ and $G$  respectively with  $P_1P_2=P$.

 Let  $C\leq P_1$  be a cyclic primary subgroup. Since $C$ $K$-$\mathfrak{F}$-$sn\,P_1$, we see that $C\mathrm{F}^*(G)$ $K$-$\mathfrak{F}$-$sn\,P_1\mathrm{F}^*(G)$ by Lemma \ref{lemN}. From $C$ $K$-$\mathfrak{F}$-$sn\,C\mathrm{F}^*(G)$ it follows  that  $C$ $K$-$\mathfrak{F}$-$sn\,P_1\mathrm{F}^*(G)$ by $(3)$ of Lemma \ref{l3.1}.

 Since $\mathfrak{F}$ has the lattice property for $K$-$\mathfrak{F}$-subnormal subgroups by Lemma \ref{lattice} and   $P_1$   is generated by all its cyclic primary subgroups, we see that  $P_1$ $K$-$\mathfrak{F}$-$sn\,P_1\mathrm{F}^*(G)$.

 From  $P_1$ $K$-$\mathfrak{F}$-$sn\,P$ it follows that  $P_1\mathrm{F}^*(G)$ $K$-$\mathfrak{F}$-$sn\,P\mathrm{F}^*(G)$ by Lemma \ref{lemN}. Since $P_1$ $K$-$\mathfrak{F}$-$sn\,P_1\mathrm{F}^*(G)$, we see that  $P_1$ $K$-$\mathfrak{F}$-$sn\,P\mathrm{F}^*(G)$ by   $(3)$ of Lemma \ref{l3.1}. The same argument shows that  $P_2$ $K$-$\mathfrak{F}$-$sn\,P\mathrm{F}^*(G)$. Thus  $P$ $K$-$\mathfrak{F}$-$sn\,P\mathrm{F}^*(G)$ by the lattice property.

 Since all Sylow  $p$-subgroups of $G$ are conjugate, they all are   $\mathrm{F}^*(G)$-$K$-$\mathfrak{F}$-subnormal. By analogy one can show that all Sylow subgroups of   $G$  are $\mathrm{F}^*(G)$-$K$-$\mathfrak{F}$-subnormal. Now $G\in\mathfrak{F}$ by Theorem \ref{t1.1}.
   \end{proof}


\begin{thebibliography}{30}






\parskip=-0mm
\parsep=0mm
\itemsep=0mm
\small


\bibitem{s9}
A. Ballester-Bolinches, L.M. Ezquerro, Classes of Finite Groups, Math. Appl., vol. 584, Springer, New York, 2006.


\bibitem{bb1}
A. Ballester-Bolinches, M.D. Perez-Ramos, On a question of L.A. Shemetkov, Comm.  Algebra  \textbf{27}(11) (1999) 5615--5618.

\bibitem{f2}
  H. Bender, On groups with abelian Sylow 2-subgroups,   Math. Z.  \textbf{117} (1970)
  164--176.

\bibitem{s8}
K. Doerk, T. Hawkes, Finite Soluble Groups, de Gruyter Exp. Math., vol. 4, Walter de Gruyter, Berlin, New York, 1992.


\bibitem{7}
T. Foguel,  Conjugate-Permutable Subgroups,  J. Algebra \textbf{191}  (1997)  235--239.



\bibitem{F2}
P. F\"{o}rster, Projektive Klassen endlicher Gruppen IIa. Ges\"{a}ttigte Formationen: Ein allgemeiner Satz von Gasch\"{u}tz-Lubeseder-Baer-Typ,  Pub. Mat. UAB   \textbf{29}(2-3) (1985) 39--76.

\bibitem{41}
W. Gash\"{u}tz,  \"{U}ber modulare Darstellungen endlicher Gruppen, die von freien Gruppen induziert werden,  Math. Z.  \textbf{60} (1954) 274--286.


\bibitem{s5} W. Guo. \emph{Structure theory for canonical classes of finite groups} (Springer, 2015).

\bibitem{h2}
P. Hall,  On the System Normalizers of a Soluble Group, Proc. London Math. Soc.
 \textbf{43}(1) (1938) 507--528.


\bibitem{19}
B. Huppert, N. Blackburn,   Finite groups. III,\,Grundlehren\,Math.\,Wiss.\,243,\,Springer,\,Berlin,\,1982.

\bibitem{MonKon}
M.N. Konovalova, V.S. Monakhov, Finite groups with some subnormal 2-maximal subgroups, PFMT, \textbf{43} (2020) 75--79.

\bibitem{f5}
O.U. Kramer, Uber Durchschnitte von Untergruppen endlicher aufl\"{o}sbarer Gruppen, Math. Z. \textbf{148} (1976) 89--97.


\bibitem{30}
 Y. Li, X. Li,  A characterization of finite supersolvable groups,  Publ. Math Debrecen \textbf{80}  3-4 (2012)  1--10.

\bibitem{h9} A. Mann,  System normalizers and subnormalizers,  Proc. London Math.\,Soc.\,\textbf{20}(1)\,(1970)\,123--143.

\bibitem{MonChir}
	V.S. Monakhov, I.K. Chirik, Finite factorised groups whose factors are subnormal supersolvable subgroups, PFMT,  \textbf{28} (2016)  40--46.



\bibitem{vF}
V.I. Murashka, Classes of finite groups with generalized cyclic primary subgroups, 	Sib. Math. J.  \textbf{55}(6) (2014)  1105--1115.


\bibitem{31}
V.I. Murashka, On partially conjugate-permutable subgroups of finite groups, Probl. Phys. Math. Tech.  \textbf{14} (2013)  74--78.


\bibitem{mv2}
V.I. Murashka,  Products of $\mathrm{F}^*(G)$-subnormal subgroups of finite groups, 	Russian Math. (Iz. VUZ)  \textbf{61}(6) (2017)  66--71.

\bibitem{Arxiv1}
V.I. Murashka, A.F. Vasil'ev, Generalized Fitting subgroups of finite groups, ArXiv.org e-Print
archive, arXiv:1310.7445v1, 28 Oct 2013.


\bibitem{VKS}
A.F. Vasil'ev, S.F. Kamornikov, and V.N. Semenchuk, On
lattices of subgroups of finite groups. In N. S. Chernikov, editor, Infinite
groups and related algebraic structures, pages 27—54, Kiev, 1993. Institut Matematiki AN Ukrainy (In Russian).



\bibitem{VM}
A.F. Vasil'ev, V.I. Murashka,  Arithmetic graphs and classes of finite groups, Sib. Math. J. \textbf{60}(1) (2019)  41--55.


\bibitem{mv1}
A.F. Vasil'ev, V.I. Murashka, Formations and Products of $\mathrm{F}(G)$-Subnormal Subgroups
of Finite Solvable Groups, Math. Notes \textbf{107}(3) (2020)  413--424.

\bibitem{mv3}
A.F. Vasil'ev, V.I. Murashka,  On the products of partially subnormal subgrouops of finite groups, Vestnik VGU \textbf{70}(4) (2012) 24--27 (In Russian).


\bibitem{gs3} A.F. Vasil'ev, T.I. Vasilyeva, V.N. Tyutyanov, On $K$-$\mathbb{P}$-Subnormal Subgroups of Finite Groups,  Math. Notes \textbf{95}(4) (2014) 471--480.

\bibitem{wF}
A.F. Vasil'ev, T.I. Vasil'eva,  A.\,S. Vegera, Finite groups with generalized subnormal embedding of Sylow subgroups,  Sib. Math. J. {\bf 57}(2) (2016) 200--212.

\bibitem{f3}
P. Schmid, Uber die Automorphismengruppen endlicher Gruppen,
Arch. Math.   \textbf{23}(3) (1972)  236--242.

\bibitem{20}
P. Schmid, R.L. Griess, The Frattini module,  Arch. Math.   \textbf{30} (1978) 256--266.


\bibitem{sp4}
A.N. Skiba,  On $\sigma$-subnormal and $\sigma$-permutable subgroups of finite groups,  J. Algebra \textbf{436} (2015) 1--16.

\bibitem{h4}
A.N. Skiba,  On the $\mathfrak{F}$-hypercenter and the intersection of all $\mathfrak{F}$-maximal subgroups of a finite group, J.   Pure  Appl. Algebra  \textbf{216}(4) (2012)  789--799.


\bibitem{SHEM}
L.A. Shemetkov,    Frattini extensions of finite groups and formations,   Comm.  Algebra \textbf{23}(3) (1997)  955--964.

\bibitem{f4}
L.A. Shemetkov, Formations of finite groups,  Nauka, Moscow,
1978 (In Russian).


\bibitem{s6}
L.A. Shemetkov, A.N. Skiba,   Formations of algebraic systems,  Nauka, Moscow, 1989  (In Russian).



\bibitem{kit}
Xianhe Zhao, Ruifang Chen, On $R$-conjugate-permutability of sylow subgroups,  Czechoslovak Math. J.  \textbf{66}(1) (2016) 111--117.



\end{thebibliography}
\end{document}